\title[On the spectral sequence from $\kh(L)$ to $\hf(-\Sigma(L))$]{On the spectral sequence from Khovanov homology to Heegaard Floer homology}
\author[John A.\ Baldwin]{John A.\ Baldwin}
\address{Department of Mathematics,
Princeton University,
Princeton, NJ 08544-1000, USA}
\email{baldwinj@math.princeton.edu}
\urladdr{http://math.princeton.edu/\char126 baldwinj}
\thanks{The author was partially supported by an NSF Postdoctoral Fellowship.}
\newcommand\wt{\widetilde}
\newcommand\kh{\widetilde{Kh}}
\newcommand\ckh{\widetilde{CKh}}
\newcommand\hf{\widehat{HF}}
\newcommand\cf{\widehat{CF}}
\newcommand\zzt{\mathbb{Z}_2}
\newcommand\T{\mathbb{T}}
\newcommand\Mh{\mathcal{M}}
\newcommand\khc{\psi}
\newcommand\ol{\widehat}
\newcommand\bv{\mathbf{v}}
\newcommand\bx{\mathbf{x}}
\newcommand\bz{\mathbf{z}}
\newcommand\by{\mathbf{y}}
\newcommand\wM{\widetilde{\mathcal{M}}}
\newcommand\Pn{\mathbf{P}_n}
\newcommand\bP{\mathbf{P}}
\newcommand\Co{\mathcal{C}}
\newcommand\hCo{\widehat{\mathcal{C}}}
\newcommand\bL{\mathbb{L}}
\newcommand\wh{\widehat}
\newcommand\htheta{\wh\Theta}
\newtheorem{theorem}{Theorem}[section]
\newtheorem{lemma}[theorem]{Lemma}
\newtheorem{conjecture}[theorem]{Conjecture}
\newtheorem{corollary}[theorem]{Corollary}
\newtheorem{proposition}[theorem]{Proposition}
\theoremstyle{definition}
\newtheorem{definition}[theorem]{Definition}
\newtheorem{remark}[theorem]{Remark}
\begin{document}
\begin{abstract}  
Ozsv{\'a}th and Szab{\'o} show in \cite{osz12} that there is a spectral sequence whose $E^2$ term is $\kh(L),$ and which converges to $\hf(-\Sigma(L))$. We prove that the $E^k$ term of this spectral sequence is an invariant of the link $L$ for all $k\geq 2$. If $L$ is a transverse link in $(S^3,\xi_{std})$, then we show that Plamenevskaya's transverse invariant $\psi(L)$ gives rise to a transverse invariant, $\psi^k(L)$, in the $E^k$ term for each $k\geq 2$. We use this fact to compute each term in the spectral sequences associated to the torus knots $T(3,4)$ and $T(3,5)$.
\end{abstract} 

\maketitle
\section{Introduction}

Let $\Sigma(L)$ denote the double cover of $S^3$ branched along the link $L$. In \cite{osz12}, Ozsv{\'a}th and Szab{\'o} construct a spectral sequence whose $E^2$ term is the reduced Khovanov homology $\kh(L)$, and which converges to the Heegaard Floer homology $\hf(-\Sigma(L))$ (using $\zzt$ coefficients throughout). Although the definition of $\kh(L)$ is intrinsically combinatorial and there is now a combinatorial way to compute $\hf(-\Sigma(L))$ \cite{sw}, the higher terms in this spectral sequence have remained largely mysterious.  For instance, the construction in \cite{osz12} depends \emph{a priori} on a planar diagram for $L$, and the question of whether these higher terms are actually invariants of the link $L$ has remained open since Ozsv{\'a}th and Szab{\'o} introduced their link surgeries spectral sequence machinery in 2003.

The primary goal of this paper is to show that for $k\geq 2$, the $E^k$ term in this spectral sequence is an invariant, as a graded vector space, of the link $L$; that is, it does not depend on a choice of planar diagram. This gives rise to a countable sequence of link invariants $\{E^k(L)\}$, beginning with $E^2(L) \cong \kh(L),$ and ending with $E^{\infty}(L) \cong \hf(-\Sigma(L))$. 
It is our hope that knowing that these higher terms are link invariants will inspire attempts to compute and make sense of them. In particular, it seems plausible that there is a nice combinatorial description of the higher differentials in this spectral sequence. Such a description would, among other things, lead to a new combinatorial way of computing $\hf(-\Sigma(L))$ (and perhaps $\hf(Y)$ for any 3-manifold $Y$, using the \emph{Khovanov homology of open books} construction in \cite{baldpla}). 

One of the first steps in this direction may involve understanding how the higher differentials behave with respect to the \emph{$\delta$-grading} on $\kh(L)$, which is defined to be one-half the quantum grading minus the homological grading. When $\kh(L)$ is supported in a single $\delta$-grading, the spectral sequence collapses at $E^2(L) \cong \kh(L)$. Therefore, one might conjecture that all higher differentials shift this $\delta$-grading by some non-zero quantity. Along these lines, it is natural to ask whether there is a well-defined quantum grading on each $E^k(L)$, and, if so, how the induced $\delta$-grading on $E^{\infty}(L)$ compares with the Maslov grading on $\hf(-\Sigma(L))$ or with the conjectured grading on $\hf(-\Sigma(L))$ described in \cite[Conjecture 8.1]{greene}. We propose the following.

\begin{conjecture}
\label{conj:quant}
For $k\geq 2$, there is a well-defined quantum grading (resp. $\delta$-grading) on each $E^k(L)$, and the $D^k$ differential increases this grading by $2k-2$ (resp. $-1$).  \end{conjecture}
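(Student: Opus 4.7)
The plan is to lift the $\delta$-grading on $\kh(L)\cong E^2(L)$ to the entire chain complex $(X,D)$ underlying Ozsv\'ath--Szab\'o's link surgery spectral sequence, and then conclude by descent to the higher pages. More precisely, if we can equip $X$ with a grading $\delta$ for which the total differential $D$ is homogeneous of degree $-1$, and which agrees on $E^2$ with the standard $\delta$-grading on $\kh(L)$, then $\delta$ automatically descends to every $E^k(L)$. Because $D^k$ is induced by the piece of $D$ that raises the cube coordinate by $k$, and this piece itself shifts $\delta$ by $-1$, the differential $D^k$ shifts $\delta$ by $-1$ on each page. Defining the quantum grading by $q:=2\delta+2h$, where $h$ is the cube filtration grading (shifted by $D^k$ by $k$), one immediately obtains $\Delta q=2(-1)+2k=2k-2$ for $D^k$, yielding the quantum part of the conjecture.

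Concretely, I would first construct the $\delta$-grading vertex by vertex. At a vertex $v\in\{0,1\}^n$ of the cube of resolutions, the resolution $L_v$ is an unlink of $c(v)$ components and $-\Sig(L_v)\cong \#^{c(v)-1}(S^1\times S^2)$. The group $\hf(-\Sig(L_v))$ carries the absolute Maslov grading on each $\Sc$ summand, and, after a uniform normalization depending on $|v|$ together with a diagram-dependent framing correction, these Maslov gradings assemble into a $\delta$-grading on $\cf(-\Sig(L_v))$ that restricts on $E^2$ to the standard $\delta$-grading on $\ckh(L)$. The main step is then to verify that every polygon map $D_{v,v'}$ contributing to $D$ decreases this $\delta$-grading by exactly $1$, independent of the cube distance $|v'-v|$. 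For $|v'-v|=1$ this follows from the standard triangle-map grading formula applied to the $0$-framed $2$-handle cobordism, combined with the identification of Khovanov edge maps with their Heegaard Floer triangle counterparts. For $|v'-v|\geq 2$ one needs an analogous grading shift formula for higher polygon counts which, together with the chosen normalization, should yield $\Delta\delta=-1$ uniformly.

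The main obstacle is the grading calculation for polygon maps of arity $\geq 4$. The triangle case is part of the standard Heegaard Floer toolkit, but for longer polygons one must track contributions from multiple $\Sc$ structures and ensure that the framing correction chosen at each vertex is compatible with all of the cube's edge maps simultaneously. A natural alternative would be an inductive, page-by-page argument, bootstrapping from the established behavior of $D^2$, but this merely defers the polygon calculation, since at each stage one still needs $\Delta\delta=-1$ for $D^k$ directly. Finally, invariance of the $\delta$-grading on $E^k(L)$ under Reidemeister moves and choices of diagram should follow from the invariance proofs already given in this paper, provided the chain maps implementing invariance can themselves be shown to be $\delta$-graded of degree $0$.
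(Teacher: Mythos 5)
The statement you have been asked to prove is a \emph{conjecture} in the paper; the paper does not prove it in general. The only result established toward it is a lemma in the final section covering almost alternating links, whose proof is a thin-ness argument: for such $L$, $\kh(L)$ is supported in at most two consecutive $\delta$-gradings, which forces any nonzero higher differential to drop $\delta$ by exactly one. No chain-level polygon counting is needed there.

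Your proposal is a chain-level strategy for the general case: lift the $\delta$-grading to the cube complex $(X,D)$ and show $D$ is homogeneous of degree $-1$. If carried out, this would settle the conjecture, but as written it has a genuine gap that you yourself flag: the grading shift for polygon maps of arity $\geq 4$ is never computed, and you say only that the analogous shift formula ``should'' give $\Delta\delta=-1$. That step is exactly the open content of the conjecture, so this is not a proof. There is also a technical obstruction to the very first step. As the paper notes, the partial quantum grading $q$ is defined only on the subspace of $X$ spanned by generators in \emph{torsion} $Spin^c$ structures, because the absolute Maslov grading needed to build $\delta$ does not exist in non-torsion $Spin^c$ structures, and at the chain level $\cf(-\Sigma(L_I))$ does contain generators in non-torsion $Spin^c$ structures (they only disappear after passing to $E^1$). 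So you cannot literally equip all of $X$ with a $\delta$-grading; one would need a partially defined grading and a separate argument that the iterated cancellation never transports nontrivial information through the non-torsion part. This makes the chain-level route considerably more delicate than your sketch suggests.
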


Although the terms $E^0(L)$ and $E^1(L)$ are not invariants of the link $L$, they provide some motivation for this conjecture. Recall that $(E^1(L),D^1)$ is isomorphic to the complex for the reduced Khovanov homology of $L$ \cite{osz12}. Under this identification, the induced quantum grading on $E^1(L)$ is (up to a shift) the homological grading plus twice the intrinsic Maslov grading. If we define a quantum grading on $E^0(L)$ by the same formula, then, indeed, $D^k$ increases quantum grading by $2k-2$ for $k=0,1$. Moreover, Josh Greene observes that Conjecture \ref{conj:quant} holds for almost alternating links (and almost alternating links account for all but at most 3 of the 393 non-alternating links with 11 or fewer crossings \cite{adams,gyh}). 

If this conjecture is true in general, then we can define a polynomial link invariant $$V^k_L(q) = \sum_{i,j}(-1)^i \,\text{rk}\,E_{i,j}^k(L)\, \cdot q^{j/2}$$ for each $k\geq 2$ (here, $i$ and $j$ correspond to the homological and quantum gradings, respectively). These conjectural link polynomials are generalizations of the classical Jones polynomial $V_L(q)$ in the sense that $V^2_L(q) = V_L(q)$, and that $V^k_L(q) = V_L(q)$ whenever $\kh(L)$ is supported in a single $\delta$-grading. 

In another direction, it would be interesting to determine whether link cobordisms induce well-defined maps between the higher terms in this spectral sequence, as was first suggested by Ozsv{\'a}th and Szab{\'o} in \cite{osz12}. For instance, a cobordism $Z \subset S^3 \times [0,1]$ from $L_1$ to $L_2$ induces a map from $\kh(L_1)$ to $\kh(L_2)$ \cite{kh1,jacobsson}. Similarly, the double cover of $S^3 \times [0,1]$ branched along $Z$ is a 4-dimensional cobordism from $\Sigma(L_1)$ to $\Sigma(L_2)$, and, therefore, induces a map from $\hf(-\Sigma(L_1))$ to $\hf(-\Sigma(L_2))$ \cite{osz5}. It seems very likely, in light of our invariance result, that both of these maps correspond to members of a larger family of maps $$\{E^k(Z): E^k(L_1) \rightarrow E^k(L_2)\}_{k=2}^{\infty}$$ induced by $Z$. We plan to return to this in a future paper.

In \cite{pla1}, Plamenevskaya defines an invariant of transverse links in the tight contact 3-sphere $(S^3,\xi_{rot})$ using Khovanov homology. To be precise, for a transverse link $L$, she identifies a distinguished element $\khc(L) \in \kh(L)$ which is an invariant of $L$ up to transverse isotopy. In Section \ref{sec:transv}, we show that $\khc(L)$ gives rise to a transverse invariant $\khc^k(L) \in E^k(L)$ for each $k\geq 2$ (where $\khc^2(L)$ corresponds to $\khc(L)$ under the identification of $E^2(L)$ with $\kh(L)$). It remains to be seen whether Plamenevskaya's invariant can distinguish two transversely non-isotopic knots which are smoothly isotopic and have the same self-linking number. Perhaps the invariants $\khc^k(L)$ will be more successful in this regard, though there is currently no evidence to support this hope. 

There are, however, other uses for these invariants. If $L$ is a transverse link in $(S^3,\xi_{rot})$, we denote by $\xi_L$ the contact structure on $\Sigma(L)$ obtained by lifting $\xi_{rot}$. The following proposition exploits the relationship between $\psi(L)$ and $c(\xi_L)$ discovered by Roberts in \cite{lrob1} (see \cite[Proposition 1.4]{baldpla} for comparison).

\begin{proposition}
\label{prop:vanishing} If $L$ is a transverse link for which $\khc^k(L)=0$, and $E^k(L)$ is supported in non-positive homological gradings, then the contact invariant $c(\xi_L) = 0$, and, hence, the contact structure $\xi_L$ is not strongly symplectically fillable.
\end{proposition}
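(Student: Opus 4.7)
The plan is to follow the structure of \cite[Proposition 1.4]{baldpla}, which establishes the $k=2$ case of this statement. Two ingredients are needed.

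The first is a consequence of the construction of $\khc^k(L)$ from Section \ref{sec:transv}: the invariant $\khc^k(L)$ will be defined inductively as the $D^{k-1}$-homology class of $\khc^{k-1}(L)$, after verifying that $\khc^{k-1}(L)$ is a $D^{k-1}$-cycle. From this inductive definition it follows immediately that if $\khc^k(L) = 0$ on some page, then $\khc^j(L) = 0$ on every subsequent page, and in particular $\khc^{\infty}(L) = 0$ in $E^{\infty}(L)$.

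The second ingredient is Roberts' theorem from \cite{lrob1}, which identifies Plamenevskaya's invariant with the contact invariant under the spectral sequence. Writing $F^{\bullet}$ for the Ozsv{\'a}th--Szab{\'o} filtration on $\hf(-\Sigma(L))$ and recalling that $\khc(L)$ lives in Khovanov homological grading $0$, Roberts' result gives that $\khc^{\infty}(L)$ is the image of $c(\xi_L)$ under the projection $F^0 \hf(-\Sigma(L)) \twoheadrightarrow E^{\infty}_0(L) = F^0/F^1$. Hence $\khc^{\infty}(L) = 0$ forces $c(\xi_L) \in F^1 \hf(-\Sigma(L))$.

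To finish, I exploit the support hypothesis. Since $E^{\infty}(L)$ is a subquotient of $E^k(L)$, it too is supported in non-positive homological gradings, so $E^{\infty}_p(L) = 0$ for all $p \geq 1$. The spectral sequence filtration on $\hf(-\Sigma(L))$ is finite, so a descending induction on $p$ yields $F^p \hf(-\Sigma(L)) = 0$ for every $p \geq 1$, and therefore $c(\xi_L) = 0$. The non-fillability conclusion then follows from the theorem of Ozsv{\'a}th--Szab{\'o} that $c(\xi)$ is nonzero for any strongly symplectically fillable contact $3$-manifold.

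The main thing to watch is convention-matching: the direction of the filtration index on $\hf(-\Sigma(L))$ must be compatible with the homological grading on $E^k(L)$, and $\khc(L)$ must sit at the top (grading $0$) of the filtration. Once this is pinned down, the descending induction is purely formal, so I do not anticipate any serious obstacle beyond this bookkeeping; the mathematical content is entirely supplied by Roberts' identification and by the inductive construction of $\khc^k(L)$ built in Section \ref{sec:transv}.
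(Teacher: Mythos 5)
Your proof is correct and reaches the same conclusion as the paper, but it packages the filtration argument differently. The paper proves, by induction on $k$, that if the image of $c$ in $E^k(L)'$ vanishes then $c^{(k-1)}$ is already a boundary in $(X'^{(k-1)},D'^{(k-1)})$, via a concrete chain-level maximality argument: it writes $D'^{(k-1)}(y') = c^{(k-1)} + x'$ with $x'$ in the highest possible filtration level $\mathcal{F}_J$, and uses the support hypothesis on $E^k(L)'$ to push $J$ all the way to $K+1$, forcing $x'=0$. You instead pass directly to the abstract level: $\khc^k(L)=0$ immediately gives $\khc^{\infty}(L)=0$; Roberts' identification of the chain-level lift $c$ (with $[c]=c(\xi_L)$ and $E^1$-class $\khc^1(L)$) plus the standard isomorphism $E^{\infty}_p \cong F^p H_*/F^{p+1}H_*$ then puts $c(\xi_L)$ in $F^1\hf(-\Sigma(L))$; and since $E^{\infty}(L)$ inherits the non-positive support from $E^k(L)$ and the filtration is finite, $F^1\hf(-\Sigma(L))=0$. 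The mathematical content and the key input (Roberts' theorem, finiteness of the $h$-filtration, the support hypothesis) are identical; what you gain is a cleaner, more formal argument that avoids tracking explicit representatives through $X'^{(k-1)}$, while the paper's version is more self-contained and doesn't lean on the general $E^{\infty} \cong F^p/F^{p+1}$ machinery. One small caution on your presentation: what you attribute to Roberts in \cite{lrob1} — that $\khc^{\infty}(L)$ is the image of $c(\xi_L)$ under $F^0 \twoheadrightarrow F^0/F^1$ — is a derived consequence of his chain-level statement (a cycle $c$ with $[c]_{H_*}=c(\xi_L)$ and $[c]_{E^1}=\khc^1(L)$) combined with the general spectral-sequence convergence isomorphism, not literally his theorem; it is worth saying this explicitly so the reader sees where the general formalism enters.
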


The fact that $\psi(L)$ gives rise to a cycle in each term $(E^k(L),D^k)$ is also helpful in computing the invariants $E^k(L).$ We will use this principle in Section \ref{sec:example} to compute all of the terms in the spectral sequences associated to the torus knots $T(3,4)$ and $T(3,5)$. 

Since this paper first appeared, Bloom has contructed a spectral sequence from the reduced Khovanov homology of a link $L$ to (a version of) the monopole Floer homology of $-\Sigma(L)$ \cite{bloom}. Our proof of Reidemeister invariance goes through without modification to show that the higher terms in this spectral sequence are link invariants as well. It is natural to guess that this spectral sequence agrees with the one in Heegaard Floer homology.

\subsection*{Organization} Section \ref{sec:mdiags} provides a fresh review of multi-diagrams and pseudo-holomorphic polygons. In Section \ref{sec:lsss}, we outline the link surgeries spectral sequence construction in Heegaard Floer homology. Section \ref{sec:ss} describes a convenient way to think about and compute spectral sequences. In Section \ref{sec:ind}, we show that the link surgeries spectral sequence is independent of the analytic choices which go into its construction. In Section \ref{sec:review}, we describe the spectral sequence from $\kh(L)$ to $\hf(-\Sigma(L))$. Section \ref{sec:reid} constitutes the meat of this article: there, we show that this spectral sequence is invariant under the Reidemeister moves. In Section \ref{sec:transv}, we use this spectral sequence to define a sequence of transverse link invariants. Finally, in Section \ref{sec:example}, we compute the spectral sequences associated to $T(3,4)$ and $T(3,5)$.

\subsection*{Acknowledgements} I wish to thank Jon Bloom, Josh Greene, Eli Grigsby, Peter Ozsv{\'a}th, Liam Watson and Stefan Wehrli for interesting discussions, and Lawrence Roberts for helpful correspondence. I owe special thanks to Josh for help in computing the spectral sequences for the examples in Section \ref{sec:example}, and to Emmanuel Wagner who pointed out an error in the original proof of Reidemeister III invariance.

\section{Multi-diagrams and pseudo-holomorphic polygons}
\label{sec:mdiags}

In this section, we review the pseudo-holomorphic polygon construction in Heegaard Floer homology. See \cite{osz8} for more details.

A \emph{pointed multi-diagram} consists of a Heegaard surface $\Sigma$; several sets of attaching curves $\gamma^1,\dots,\gamma^r$; and a basepoint $z \in \Sigma - \cup_i \gamma^i$. Here, each $\gamma^i$ is a $g$-tuple $\{\gamma^i_1,\dots,\gamma^i_g\}$, where $g = g(\Sigma)$. For each $i$, let $\T_{\gamma^i}$ be the torus $\gamma^i_1\times\dots\times\gamma^i_g$ in the symmetric product $Sym^g(\Sigma)$. We denote by $Y_{\gamma^{i},\gamma^{j}}$ the 3-manifold with pointed Heegaard diagram $(\Sigma,\gamma^{i},\gamma^{j},z).$ 

Recall that, for $\bx$ and $\by$ in $\T_{\gamma^{i}}\cap\T_{\gamma^{j}}$, a \emph{Whitney disk} from $\bx$ to $\by$ is a map $\phi$ from the infinite strip $\mathbf{P}_2=[0,1]\times i\mathbb{R} \subset \mathbb{C}$ to $Sym^g(\Sigma)$ for which $$\phi(\{1\}\times i\mathbb{R})\subset \T_{\gamma^{j}},\,\,\,\,\,\, \,\phi(\{0\}\times i\mathbb{R})\subset \T_{\gamma^{i}}, \,\,\,\,\,\,\,\lim_{t\rightarrow -\infty}\phi(s+it) = \bx,\,\,\,\,\,\,\,\lim_{t\rightarrow +\infty}\phi(s+it) = \by. $$ The space of homotopy classes of Whitney disks from $\bx$ to $\by$ is denoted by $\pi_2(\bx,\by)$. We define \emph{Whitney $n$-gons} in a similar manner. 

For each $n\geq 3$, we fix a region $\Pn \subset \mathbb{C}$ which is conformally equivalent to an $n$-gon. We shall think of $\Pn$ as merely a topological region in $\mathbb{C}$, without a specified complex structure. We require that $\Pn$ does not have vertices, but half-infinite strips instead (so that our Whitney $n$-gons are straightforward generalizations of Whitney disks). After fixing a clockwise labeling $e_1,\dots, e_n$ of the sides of $\Pn$, we require that for each pair of non-adjacent sides $e_i$ and $e_j$ with $i<j$ there is an oriented line segment $C_{i,j}$ from $e_i$ to $e_j$ which is perpendicular to these sides. Let $s_i$ denote the half-infinite strip sandwiched between sides $e_i$ and $e_{i+1}.$ See Figure \ref{fig:sept}.(a) for an example when $n=7$.

\begin{figure}[!htbp]
\labellist 
\hair 2pt 
\small
\pinlabel $\phi$ at 837 239

\pinlabel $e_1$ at 260 80 
\pinlabel \rotatebox{-40}{$e_2$} at 135 140
\pinlabel \rotatebox{-22}{$s_1$} at 170 60 
\pinlabel \rotatebox{-60}{$s_2$} at 60 198
\pinlabel ${\gamma^{j_1}}$ at 840 70 

\pinlabel \rotatebox{-42}{${\gamma^{j_2}}$} at 708 133
\pinlabel \rotatebox{57}{${\gamma^{j_n}}$} at 972 138
\pinlabel \rotatebox{-22}{$\bx_1$} at 727 15 
\pinlabel \rotatebox{22}{$\bx_n$} at 948 18 
\pinlabel \rotatebox{-64}{$\bx_2$} at 596 186 

\pinlabel $(a)$ at 25 450 
\pinlabel $(b)$ at 600 450
\pinlabel $C_{4,7}$ at 375 419
\endlabellist 
\begin{center}
\includegraphics[height=4.8cm]{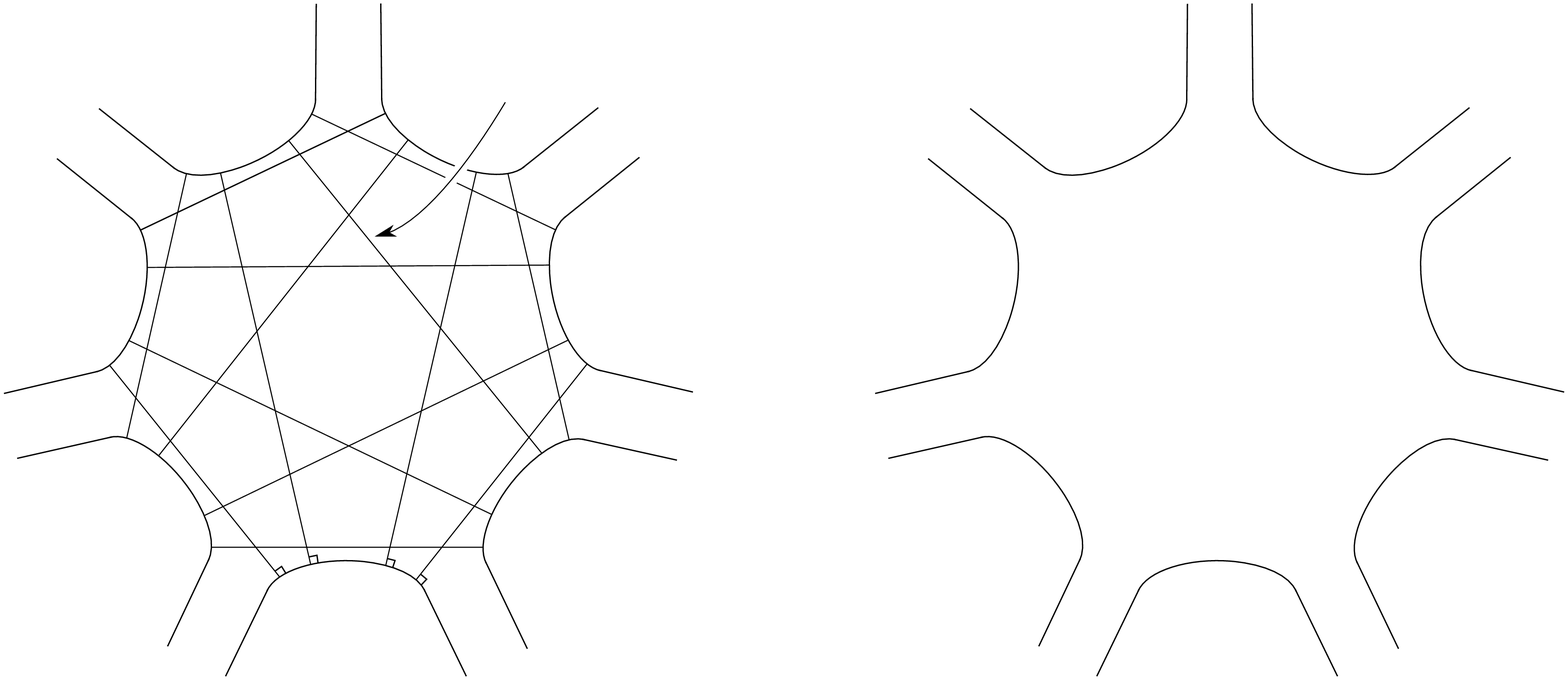}
\caption{\quad On the left, $\mathbf{P}_7$. On the right, a schematic illustration of some $\phi \in \pi_2(\bx_1,\dots,\bx_n).$ }
\label{fig:sept}
\end{center}
\end{figure}

Suppose that $\bx_1,\dots,\bx_{n-1}$ and $\bx_n$ are points in $\T_{\gamma^{j_1}} \cap \T_{\gamma^{j_2}},\,\dots, \,\T_{\gamma^{j_{n-1}}} \cap \T_{\gamma^{j_{n}}},$ and $\T_{\gamma^{j_1}} \cap \T_{\gamma^{j_n}}$. A Whitney $n$-gon connecting these points is a map $\phi:\Pn\rightarrow Sym^g(\Sigma)$ for which $\phi(e_i) \subset \T_{\gamma^{j_i}}$ for $i=1,\dots,n$, and $\phi$ sends points in $s_i$ asymptotically to $\bx_i$ as the parameter in the strip approaches $\infty$. See Figure \ref{fig:sept}.(b) for a schematic illustration. We denote the space of homotopy classes of such Whitney $n$-gons by $\pi_2(\bx_1,\dots,\bx_n)$. 


Now, fix a complex structure $\mathfrak{j}$ over $\Sigma$ and choose a contractible open set $\mathcal{U}$ of \emph{$\mathfrak{j}$-nearly-symmetric} almost-complex structures over $Sym^g(\Sigma)$ which contains $Sym^g(\mathfrak{j})$ (see \cite[Section 3]{osz8}). In addition, fix a path $J_s \subset \mathcal{U}$ for $s\in [0,1]$. We define a map $J_2:\mathbf{P}_2\rightarrow \mathcal{U}$ by $$J_2(s+it) = J_s.$$ A pseudo-holomorphic representative of $\phi\in \pi_2(\bx,\by)$ is a map $u:\mathbf{P}_2\rightarrow Sym^g(\Sigma)$ which is homotopic to $\phi$ and which satisfies the Cauchy-Riemann equation $$du_p\circ i = J_2(p) \circ du_p$$ for all $p\in \mathbf{P}_2$. Let $\wM_{J_2}(\phi)$ denote the moduli space of pseudo-holomorphic representatives of $\phi$.  This moduli space has an expected dimension, which we denote by $\mu(\phi)+1$. If $\mu(\phi) = 0$, then $\wM_{J_2}(\phi)$ is a smooth, compact 1-manifold for sufficiently generic choices of $\mathfrak{j}$ and $J_2$. Note that the $\mathbb{R}$-action on $\mathbf{P}_2$ given by translation in the $i\mathbb{R}$ direction induces an $\mathbb{R}$-action on $\wM_{J_2}(\phi)$. If $\mu(\phi)=0$, then the quotient $\Mh_{J_2}(\phi) = \wM_{J_2}(\phi)/\mathbb{R}$ is a compact 0-manifold. Recall that $(\cf(Y_{\gamma^{i}, \gamma^{j}}),\partial_{J_2})$ is the chain complex generated over $\zzt$ by intersection points $\bx \in \T_{\gamma^{i}}\cap\T_{\gamma^{j}}$ whose differential is defined by $$\partial_{J_2} (\bx) =f^{J_2}_{\gamma^{i},\gamma^{j}}(\bx)= \sum_{\by\in\T_{\gamma^{i}}\cap\T_{\gamma^{j}}}\,\,\sum_{\{ \phi\in \pi_2(\bx,\by)\,|\, \mu(\phi)=0,\, n_z(\phi)=0\}} \# (\Mh_{J_2}(\phi)) \cdot \by,$$ where $n_z(\phi)$ is the intersection of $\phi$ with the subspace $\{z\}\times Sym^{g-1}(\phi)\subset Sym^g(\Sigma)$ (we choose $\mathcal{U}$ so that $n_z(\phi)$ is always non-negative). This sum is finite as long as $(\Sigma,\gamma^{i},\gamma^{j},z)$ is \emph{weakly-admissible} (see \cite[Section 4]{osz8}). 

Further setup is required to define the maps induced by counting more general pseudo-holomorphic $n$-gons, as we would like for these maps to satisfy certain $\mathcal{A}_{\infty}$ relations. For more details on this sort of construction, see \cite{desilva,fukaya,seidel2,tian}. 

Let $\Co(\Pn)$ denote the space of conformal structures on $\Pn$. $\Co(\Pn)$ can be thought of as the space of conformal $n$-gons in $\mathbb{C}$ which are obtained from the region $\Pn$ as follows: choose $n-3$ mutually disjoint chords $C_1,\dots,C_{n-3}$ among the $C_{i,j}$, and replace each $C_k$ by the product $C_k\times I_k$ for some interval $I_k=[-t_k,t_k]$. We embed this product so that each $C_k\times \{t\}$ is parallel to and points in the same direction as the original $C_k$, and so that the vectors $i\frac{\partial}{\partial t}$ agree with the orientation of $C_k$. We refer to this process as \emph{stretching along the chords $C_1,\dots,C_{n-3}$}. An $n$-gon obtained in this way, with its conformal structure inherited from $\mathbb{C}$, is called a \emph{standard conformal $n$-gon}. See Figure \ref{fig:con} for an illustration. 

$\Co(\Pn)$ has a compactification $\hCo(\Pn)$ gotten by including the degenerate $n$-gons obtained in the limit as one or more of the stretching parameters $t_k$ approaches $\infty$. For each set of $n-3$ disjoint chords, this stretching procedure (allowing for degenerations) produces an $(n-3)$-dimensional cube of conformal $n$-gons since each of the $n-3$ stretching parameters can take any value in $[0,\infty]$. The hypercubes formed in this way fit together into a convex polytope $\mathscr{P}_n$ called an \emph{associahedron} (see \cite{bloom} for a related construction). The interior of $\mathscr{P}_n$ parametrizes $\Co(\Pn)$, and its boundary parametrizes the degenerate $n$-gons. More precisely, points in the interiors of the $k$-dimensional facets of $\mathscr{P}_n$ correspond to degenerate $n$-gons in which exactly $k-n+3$ of the stretching parameters are infinite.

\begin{figure}[!htbp]
\labellist 
\hair 2pt 
\small
\pinlabel {$I_1$} at 263 143
\pinlabel {$e_1$} at 400 63
\pinlabel \rotatebox{-12}{$I_2$} at 156 227
\pinlabel \rotatebox{38}{$I_3$} at 356 513
\pinlabel \rotatebox{13}{$I_4$} at 574 524

\endlabellist 
\begin{center}
\includegraphics[height=5.8cm]{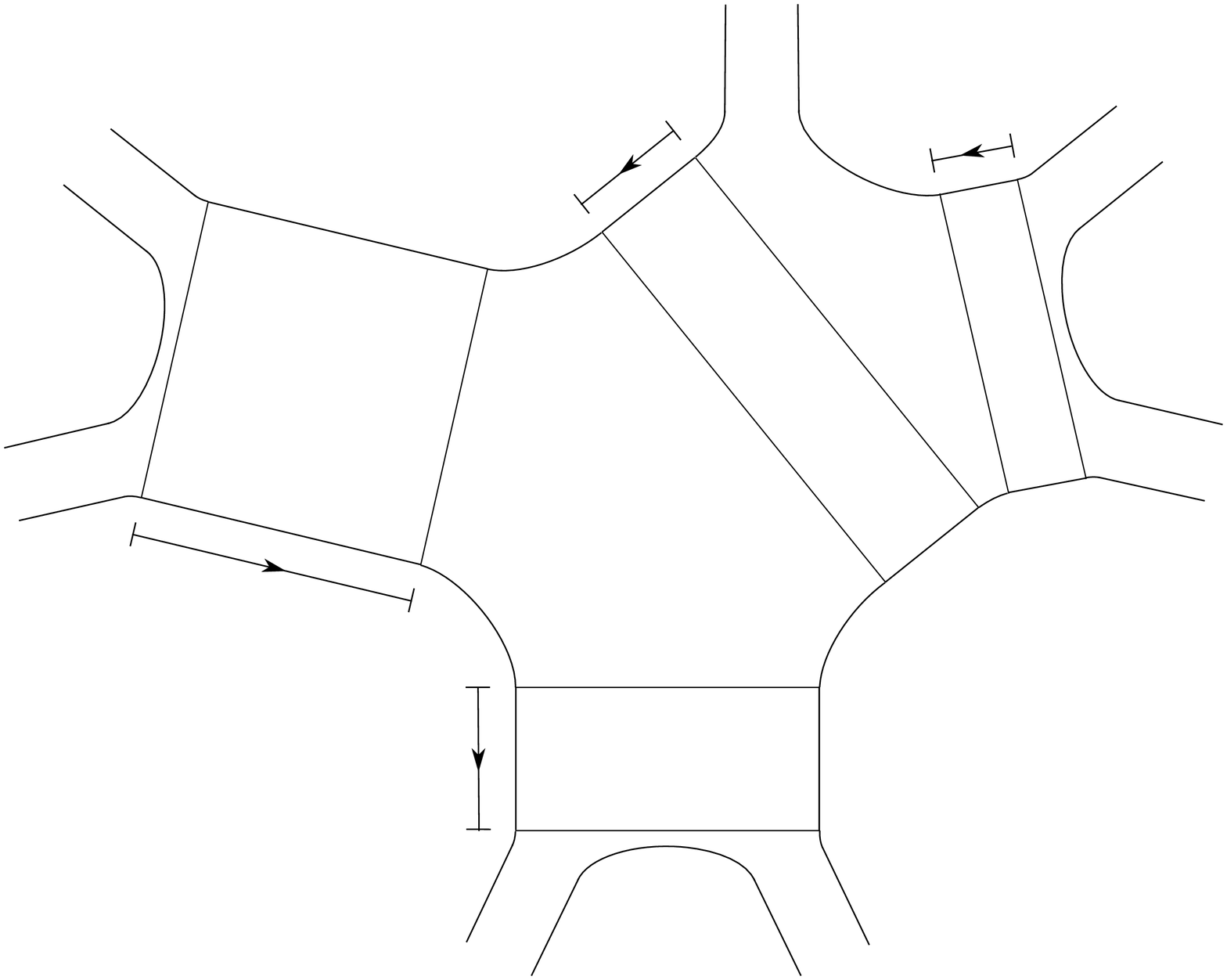}
\caption{\quad A standard conformal $n$-gon obtained by stretching $\mathbf{P}_7$ along the disjoint chords $C_1=C_{2,7}$, $C_2=C_{2,4}$, $C_3=C_{4,7}$ and $C_4=C_{5,7}$. The arrows on the intervals indicate the directions of increasing $t$ values.}
\label{fig:con}
\end{center}
\end{figure}

Following Ozsv{\'a}th and Szab{\'o} \cite[Section 8]{osz8}, we construct maps $J_n:\mathbf{P}_n\times\Co(\Pn)\rightarrow\mathcal{U}$ which satisfy certain compatibility conditions. To describe these conditions, it is easiest to think of $J_n$ as a collection of maps on the standard conformal $n$-gons. If $P$ is a standard conformal $n$-gon, we first require that $J_n|_P$ agrees with $J_2$ on the strips $s_1,\dots,s_n$, via conformal identifications of these strips with $[0,1]\times i[0,\infty)\subset \mathbf{P}_2$. 

Next, suppose that $P$ is obtained from $\Pn$ by stretching along some set of disjoint chords $C_1,\dots, C_{n-4}$. Suppose that $C_{i,j}$ is disjoint from these chords and consider the $n$-gon $P_T$ obtained from $P$ by replacing $C_{i,j}$ by the product $C_{i,j}\times[-T,T]$. Then $C_{i,j}\times\{0\}$ divides $P_T$ into two regions. Of these two regions, let $L_T$ denote the one which contains $C_{i,j}\times\{-T\}$, and let $R_T$ denote the one which contains $C_{i,j}\times\{T\}$. Let $L'_T$ denote the region in $\mathbb{C}$ formed by taking the union of $L_T$ with the strip $C_{i,j}\times[0,\infty)$ along $C_{i,j}\times\{0\}$; likewise, let $R'_T$ denote the region formed by taking the union of $R_T$ with the strip $C_{i,j}\times(-\infty,0]$ along $C_{i,j}\times\{0\}$. $L'_T$ is conformally equivalent to a $(j-i+1)$-gon while $R_T'$ is conformally equivalent to a $(n-j+i+1)$-gon. 
Let $s_L$ denote the half-infinite strip $C_{i,j}\times [-T,\infty) \subset L_T'$, and let $s_R$ denote the strip $C_{i,j}\times (-\infty,T]\subset R_T'$. See Figure \ref{fig:deg} for an illustration. 

\begin{figure}[!htbp]
\labellist 
\hair 2pt 
\small
\pinlabel {$(a)$} at 70 645
\pinlabel {$(b)$} at 870 645
\pinlabel {$s_R$} at 1330 440
\pinlabel {$s_L$} at 1600 250
\pinlabel \rotatebox{38}{$-T$} at 395 592
\pinlabel \rotatebox{38}{$0$} at 298 514
\pinlabel \rotatebox{38}{$T$} at 193 433
\pinlabel \rotatebox{38}{$0$} at 1088 511
\pinlabel \rotatebox{38}{$T$} at 983 432
\pinlabel \rotatebox{38}{$-\infty$} at 1324 686
\pinlabel \rotatebox{38}{$-T$} at 1945 262
\pinlabel \rotatebox{38}{$0$} at 1850 190
\pinlabel \rotatebox{38}{$\infty$} at 1605 5
\endlabellist 
\begin{center}
\includegraphics[height =5.2cm]{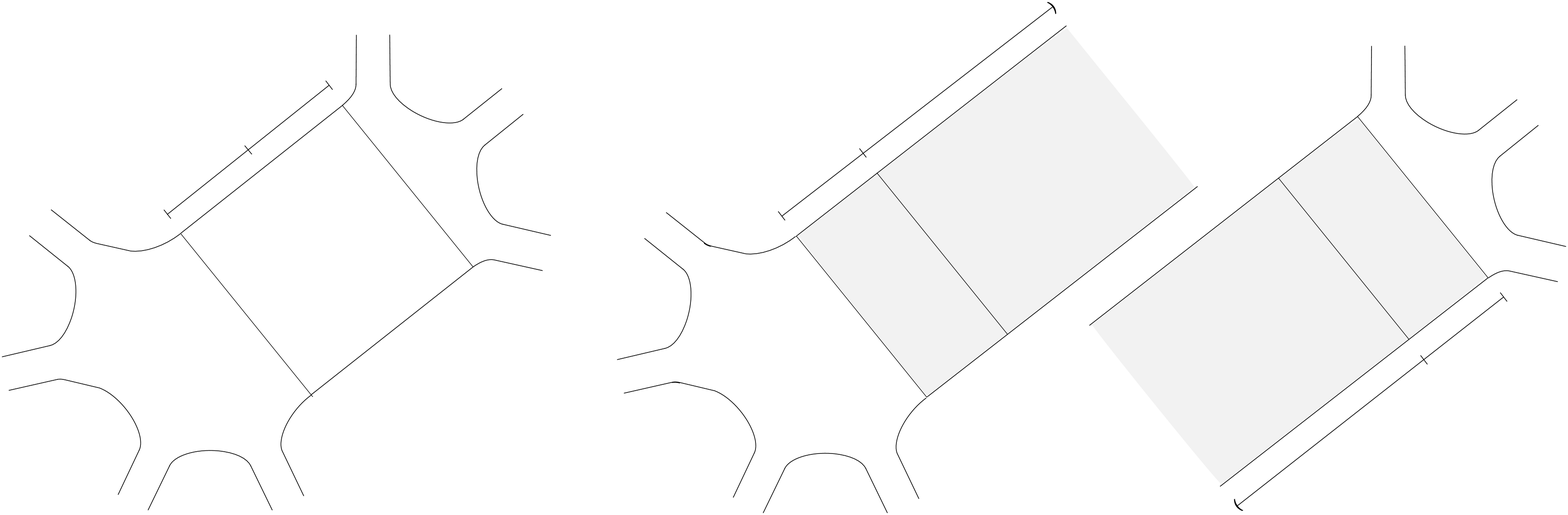}
\caption{\quad Here, $n=7$ and we have replaced $C_{4,7}$ by $C_{4,7}\times[-T,T]$. On the left, $P_T$. On the right, the conformal $5$- and $4$-gons, $R_T'$ and $L_T'$, respectively; $s_R$ and $s_L$ are the shaded strips.}
\label{fig:deg}
\end{center}
\end{figure}

Let $\Phi_L$ be the conformal equivalence which maps $L'_T$ to a standard conformal $(j-i+1)$-gon such that $s_i$ is sent to $s_1$; and let $\Phi_R$ be the conformal equivalence which maps $R'_T$ to a standard conformal $(n-j+i+1)$-gon such that $s_R$ is sent to $s_i$. The compatibility we require is that, for $T$ large enough, the restriction of $J_n$ to ${L_T}\subset P_T$ agrees with the restriction of $J_{j-i+1}\circ\Phi_L$ to $L_T\subset L'_T$; and that the restriction of $J_n$ to ${R_T}\subset P_T$ agrees with the restriction of $J_{n-j+i+1}\circ\Phi_R$ to $R_T\subset R'_T.$ Said more informally: in the limit as $T$ approaches $\infty$, the $n$-gon $P_T$ breaks apart into a $(j-i+1)$-gon and an $(n-j+i+1)$-gon, and we require that the map $J_n$ agrees in this limit with the maps $J_{j-i+1}$ and $J_{n-j+i+1}$ on the two pieces. 

Here, we are specifying the behavior of $J_n$ near the boundary of $\hCo(\Pn)$. Since $\mathcal{U}$ is contractible, it is possible to extend any map defined near this boundary to the interior. In this way, the maps $J_n$ are defined inductively beginning with $n=3$, in which case there are no degenerations and the only requirement is that $J_3$ agrees with $J_2$ on the strips $s_1,s_2,s_3$, as described above. Fix some generic $J_3$ which satisfies this requirement, and proceed. 

A pseudo-holomorphic representative of a Whitney $n$-gon $\phi\in\pi_2(\bx_1,\dots,\bx_n)$ consists of a conformal structure $\mathfrak{c}\in \Co(\Pn)$ together with a map $u:\mathbf{P}_n\rightarrow Sym^g(\Sigma)$ which is homotopic to $\phi$ and which satisfies $$du_p\circ i_{\mathfrak{c}} = J_n((p,\mathfrak{c}))\circ du_p$$ for all $p\in\mathbf{P}_n,$ where $i_{\mathfrak{c}}$ is the almost-complex structure on $\Pn$ associated to $\mathfrak{c}$. Let $\mathcal{M}_{J_n}(\phi)$ denote the moduli space of pseudo-holomorphic representatives $(u,\mathfrak{c})$ of $\phi$. As before, this moduli space has an expected dimension, which we denote by $\mu(\phi)$. If $\mu(\phi) = 0$ then $\mathcal{M}_{J_n}(\phi)$ is a compact 0-manifold for sufficiently generic choices of $\mathfrak{j}$ and $\{J_n\}_{n\geq2}$. 
 With this fact in hand, we may define a map $$f^{J_n}_{\gamma^{j_1},\dots,\gamma^{j_n}}:\cf(Y_{\gamma^{j_1},\gamma^{j_2}})\otimes\dots\otimes\cf(Y_{\gamma^{j_{n-1}},\gamma^{j_n}})\rightarrow \cf(Y_{\gamma^{j_1},\gamma^{j_n}})$$ by $$f^{J_n}_{\gamma^{j_1},\dots,\gamma^{j_n}}(\bx_1\otimes\dots\otimes\bx_{n-1}) = \sum_{\bx_n\in\T_{\gamma^{j_1}}\cap\T_{\gamma^{j_n}}}\,\,\sum_{\{ \phi\in \pi_2(\bx_1,\dots,\bx_n)\,|\, \mu(\phi)=0,\, n_z(\phi)=0\}} \# (\mathcal{M}_{J_n}(\phi)) \cdot \bx_n.$$ This sum is finite as long as the associated pointed multi-diagram is weakly-admissible. Since weak-admissibility can easily be achieved via ``winding'' (as in \cite[Section 5]{osz8}), we will not worry further about such admissibility concerns, in this or any of the following sections. Also, we will generally omit the superscript $J_n$ from the notation for these maps unless we wish to emphasize our choice of analytic data. 
 


\section{The link surgeries spectral sequence}
\label{sec:lsss}

This section provides a review of the link surgeries spectral sequence, as defined by Ozsv{\'a}th and Szab{\'o} in \cite{osz12}.

Suppose that $\bL = L_1\cup\dots\cup L_m$ is a framed link in a 3-manifold $Y$. A \emph{bouquet} for $\bL$ is an embedded 1-complex $\Gamma \subset Y$ formed by taking the union of $\bL$ with $m$ paths joining the components $L_i$ to a fixed reference point in $Y$. The regular neighborhood $N(\Gamma)$ of this bouquet is a genus $m$ handlebody, and a subset of its boundary is identified with the disjoint union of $m$ punctured tori $F_i$, one for each component of $\bL$.

\begin{definition}
A pointed Heegaard diagram $(\Sigma,\alpha,\beta,z)$ is \emph{subordinate} to $\Gamma$ if 
\begin{enumerate}
\item $(\Sigma, \{\alpha_1,\dots,\alpha_g\}, \{\beta_{m+1},\dots,\beta_g\})$ describes the complement $Y-\text{int}N(\Gamma),$ 
\item after surgering $\beta_{m+1},\dots,\beta_g$ from $\Sigma$, $\beta_i$ lies in $F_i\subset \partial N(\Gamma)$ for each $i = 1,\dots,m,$
\item for $i=1,\dots,m$, the curve $\beta_i$ is a meridian of $L_i.$
\end{enumerate}
\end{definition}

Suppose that $(\Sigma,\alpha,\beta,z)$ is \emph{subordinate} to $\Gamma$, and fix an orientation on the meridians $\beta_1,\dots,\beta_m$. For $i=1,\dots,m$, let $\gamma_i$ be an oriented curve in $F_i\subset \Sigma$ which represents the framing of the component $L_i$, such that $\beta_i$ intersects $\gamma_i$ in one point with algebraic intersection number $\beta_i\cdot \gamma_i=1$ and $\gamma_i$ is disjoint from all other $\beta_j$. For $i=1,\dots,m$, let $\delta_i$ be an oriented curve in $F_i\subset \Sigma$ which is homologous to $\beta_i+\gamma_i$, such that $\delta_i$ intersects each of $\beta_i$ and $\gamma_i$ in one point and is disjoint from all other $\beta_j$ and $\gamma_j$. For $i=m+1,\dots,g$, we define $\gamma_i$ and $\delta_i$ to be small Hamiltonian translates of $\beta_i$.

For $I = (I_1,\dots,I_m)\in \{\infty,0,1\}^m$, let $\eta(I) = \{ \eta(I)_1,\dots,\eta(I)_g\}$ be the set of attaching curves on $\Sigma$ defined by 
$$\eta(I)_j = 
\left\{\begin{array}{ll}
\beta_j &\text{if $I_j=\infty,$}\\
\gamma_j &\text{if $I_j=0,$}\\
\delta_j &\text{if $I_j=1.$}
\end{array}\right.$$
Then $Y_{\alpha,\eta(I)}$ is the 3-manifold obtained from $Y$ by performing $I_j$-surgery on the component $L_j$ for $j=1,\dots,m$. To define a multi-diagram using the sets of attaching curves $\eta(I)$, we need to be a bit more careful. Specifically, in the definition of the $g$-tuples $\eta(I)$, we must use different (small) Hamiltonian translates of the $\beta$, $\gamma$ and $\delta$ curves for each $I$, for which none of the associated isotopies cross the basepoint $z$. In addition, we require that every translate of a $\beta$ (resp. $\gamma$ resp. $\delta$) curve intersects each of the other copies in two points, and that we can join the basepoint $z$ to a point on each of $\gamma_1,\dots,\gamma_m$ by a path which does not intersect the $\alpha$ curves or the other $\gamma$ curves. We say that the pointed multi-diagram consisting of the Heegaard surface $\Sigma$; the sets of attaching curves $\alpha$ and the $\eta(I)$ for $I\in\{\infty,0,1\}^m$; and the basepoint $z$ is \emph{compatible} with the framed link $\bL$. It is not hard to see that such a compatible multi-diagram exists.

Order the set $\{\infty,0,1\}$ by $\infty<0<1$. We say that $I\leq I'$ if $I_j \leq I'_j$ for every $j \in \{1,\dots,m\}$; if $I\leq I'$ and $I\neq I'$, then we write $I<I'$. The vector $I'$ is called an \emph{immediate successor} of $I$ if $I_j = I_j'$ for all $j \neq k$ and either $I_j = \infty$ and $I_j'=0$ or $I_j=0$ and $I_j'=1$. For $I \leq I'$, a \emph{path} from $I$ to $I'$ is a sequence of inequalities $I = I^2<\dots<I^n = I'$, where each $I^j$ is the immediate successor of $I^{j-1}$.\footnote{We start with $I=I^2$ rather than, say, $I=I^1$ for the sake of notational convenience down the road.} In particular, we think of $I=I^2=I$ as the ``trivial" path from $I$ to $I$. For the rest of this section, we shall only consider vectors in $\{0,1\}^m$.

Let $X$ be the vector space defined by $$X= \bigoplus_{I\in \{0,1\}^m}\cf(Y_{\alpha,\eta(I)}).$$ For every path $ I^2<\dots<I^n$, we define a map $$D_{I^2<\dots<I^n}: \cf(Y_{\alpha,\eta(I^2)})\rightarrow\cf(Y_{\alpha, \eta(I^n)})$$ by $$D_{I^2<\dots<I^n}(\bx) = f_{\alpha, \eta(I^2),\dots,\eta(I^n)}(\bx\otimes\htheta_{2,3}\otimes\dots\otimes\htheta_{n-1,n}),$$ where $\htheta_{i,j}$ is the unique intersection point in $ \T_{\eta(I^i)} \cap \T_{\eta(I^j)}$ with highest Maslov grading when thought of as a generator in $\cf(Y_{\eta(I^i),\eta(I^{j})})$. We define $D_{I,I'}$ to be the sum over paths from $I$ to $I'$, $$D_{I,I'}=\sum_{I = I^2<\dots<I^n = I'} D_{I^2<\dots<I^n}.$$ Note that $D_{I,I} = D_{I^2}$ is just the standard Heegaard Floer boundary map for the chain complex $(\cf(Y_{\alpha,\eta(I)}),\partial).$ Finally, we define $D:X\rightarrow X$ to be the sum $$D = \sum D_{I,I'}.$$

\begin{proposition}[\cite{osz12}]
\label{prop:bound}
$D$ is a boundary map; that is, $D^2 = 0$. 
\end{proposition}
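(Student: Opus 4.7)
The identity $D^2=0$ should follow from the $\mathcal{A}_\infty$ relations for the pseudo-holomorphic polygon maps reviewed in Section \ref{sec:mdiags}, combined with standard facts about polygon counts whose inputs are all top generators $\htheta_{i,j}$. The strategy is to interpret every term in $D^2(\bx)$ as one specific type of end of a 1-dimensional moduli space of polygons, and to organize the remaining ends so that they cancel pairwise across the sum defining $D$.

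Fix $\bx\in\cf(Y_{\alpha,\eta(I)})$ and a target $I'\geq I$ in $\{0,1\}^m$. For each immediate-successor path $I=I^2<\dots<I^n=I'$ and each $\by\in\T_\alpha\cap\T_{\eta(I^n)}$, I would consider the 1-dimensional moduli space of pseudo-holomorphic $(n+1)$-gons with boundary conditions on $\T_\alpha,\T_{\eta(I^2)},\dots,\T_{\eta(I^n)}$, inputs $\bx,\htheta_{2,3},\dots,\htheta_{n-1,n}$, and output $\by$. The $\mathcal{A}_\infty$ relation coming from the ends of this moduli space equates a certain signed count of degenerate $(n+1)$-gon configurations to zero. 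The degenerations come in two types:
\begin{itemize}
\item[(a)] those that split off a sub-polygon containing $\bx$ as an input, which contribute exactly the compositions $D_{I^k<\dots<I^n}\circ D_{I^2<\dots<I^k}(\bx)$ appearing in the $\by$-coefficient of $D^2(\bx)$;
\item[(b)] those that contract a sub-polygon whose inputs are all top generators $\htheta_{i,j}$.
\end{itemize}
Summing the $\mathcal{A}_\infty$ identities over all immediate-successor paths from $I$ to $I'$, the type-(a) ends reassemble into the $\cf(Y_{\alpha,\eta(I')})$-component of $D^2(\bx)$, so it suffices to show that the type-(b) ends cancel.

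For the type-(b) analysis, I would invoke the standard identities for the small-perturbation Heegaard data used here (see \cite{osz8,osz12}): the triangle count satisfies $f_{\eta(I^i),\eta(I^j),\eta(I^k)}(\htheta_{i,j}\otimes\htheta_{j,k})=\htheta_{i,k}$, while the higher-order polygon counts among strings of $\htheta$'s are themselves governed by an $\mathcal{A}_\infty$ relation among the top generators. Using these, each type-(b) end from a length-$n$ path can be paired with a type-(b) end from a related path --- obtained by replacing an adjacent pair $\htheta_{i-1,i}\otimes\htheta_{i,i+1}$ with $\htheta_{i-1,i+1}$, which reinterprets a triangle contraction as a collapse of the path through $I^i$ --- and these paired contributions cancel over $\zzt$. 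The higher sub-polygon degenerations cancel among themselves by the same mechanism, one layer deeper in the $\mathcal{A}_\infty$ hierarchy.

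I expect the main obstacle to be this combinatorial bookkeeping: verifying that every type-(b) contribution appears with even multiplicity in the sum over all immediate-successor paths from $I$ to $I'$, and that the triangle-contraction substitution correctly identifies the pairs. The argument parallels the verification that the cube-of-resolutions differential in Khovanov homology squares to zero, and once the pairing is in place the proposition follows.
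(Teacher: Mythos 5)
Your overall strategy --- interpret the terms of $D^2(\bx)$ as one class of ends of a union of $1$-dimensional moduli spaces of pseudo-holomorphic polygons, one moduli space for each immediate-successor path from $I$ to $I'$, and then show the remaining ``bad'' ends cancel modulo $2$ --- is exactly the route the paper takes (and revisits in detail in Section \ref{sec:ind} for the parallel Proposition \ref{prop:chain}). The identification of the type-(a) ends with the compositions $D_{I^k<\dots<I^n}\circ D_{I^2<\dots<I^k}$, and the pairing of type-(b) triangle contractions across the two paths that traverse each elementary square, match the paper's argument.

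The one place your sketch would run into trouble is the treatment of the higher-order sub-polygon contractions, namely the type-(b) ends where the contracted sub-polygon has four or more sides and all $\htheta$ inputs. You assert these ``cancel among themselves by the same mechanism, one layer deeper in the $\mathcal{A}_\infty$ hierarchy,'' but the path-substitution pairing that works for triangles does not extend cleanly here. What actually happens is more drastic: for these degenerations the relevant homotopy classes in $\pi_2(\htheta_{i,i+1},\dots,\htheta_{j-1,j},\bz)$ with $n_z=\mu=0$ are simply \emph{empty}, by a Maslov-index argument as in \cite[Lemma 4.5]{osz12}, so there is nothing to cancel. Relatedly, for the $2$-gon (disk) contractions the cancellation happens within a single path rather than across paths --- the count from $\htheta_{i,i+1}$ to each fixed $\bz$ is even because $\htheta_{i,i+1}$ is a cycle --- so only the genuine triangle contractions involve the across-path pairing you describe. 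If you replace your ``one layer deeper'' claim with the vanishing result for $\geq 4$-gons, and separate out the disk case, your sketch becomes the paper's proof.
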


In particular, $(X,D)$ is a chain complex. The proof of Proposition \ref{prop:bound} proceeds roughly as follows. For intersection points $\bx \in \T_{\alpha}\cap\T_{\eta(I)}$ and $\by \in \T_{\alpha}\cap\T_{\eta(I')}$, the coefficient of $\by$ in $D^2(\bx)$ counts maps of certain degenerate $n$-gons into $Sym^g(\Sigma)$. These maps can be thought of as ends of a union of 1-dimensional moduli spaces (c.f. \cite{desilva,fukaya,seidel2,tian}), \begin{equation}\label{eqn:union}\coprod_{p\in S}\,\,\coprod_{\phi\in \pi_2(\bx,\by;p)}\mathcal{M}_{J_{n}}(\phi),\end{equation} where $S$ is the set of paths from $I$ to $I'$, and $\pi_2(\bx,\by;p)$ denotes, for a path $p$ in $S$ given by $I=I^2<\dots<I^n=I'$, the set of $\phi \in\pi_2(\bx,\htheta_{2,3},\dots,\htheta_{n-1,n},\by)$ with $n_z(\phi)=0$ and $\mu(\phi) = 1.$ This follows, in part, from the compatibility conditions described in the previous section, which control the behaviors of pseudo-holomorphic representatives of these $\phi$ as their underlying conformal structures degenerate. 

In general, for a fixed path $p\in S$, the union $$\coprod_{\phi\in \pi_2(\bx,\by;p)}\mathcal{M}_{J_{n}}(\phi)$$ has ends which do not contribute to the coefficient of $\by$ in $D^2(\bx)$; but these cancel in pairs when we take a union over all paths in $S$. Proposition \ref{prop:bound} then follows from the fact that the total number of ends of the union in (\ref{eqn:union}) is zero mod 2. We will revisit this sort of argument in more detail in Section \ref{sec:ind}.

The beautiful result below is the basis for our interest in these constructions.

\begin{theorem}[{\rm \cite[Theorem 4.1]{osz12}}]
\label{thm:conv}
The homology $H_*(X,D)$ is isomorphic to $\hf(Y)$.
\end{theorem}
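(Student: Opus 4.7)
The plan is to induct on the number of components $m$ of $\bL$. For the base case $m=1$, the complex $(X,D)$ reduces to the two-term mapping cone
$$\cf(Y_{\alpha,\eta(0)})\xrightarrow{\,D_{0,1}\,}\cf(Y_{\alpha,\eta(1)}),$$
whose connecting map is the usual triangle map $\bx\mapsto f_{\alpha,\eta(0),\eta(1)}(\bx\otimes\htheta_{0,1})$. The Ozsv{\'a}th--Szab{\'o} surgery exact triangle (established in \cite{osz8} via triangle counts of precisely this form) identifies the homology of this cone with $\hf(Y)$, where $Y$ is realized as the ``$\infty$-surgery'' vertex of the triangle on $L_1$.

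For the inductive step, decompose $X=X^0\oplus X^1$ according to the value of the last coordinate $I_m\in\{0,1\}$, so that $X^i=\bigoplus_{I'\in\{0,1\}^{m-1}}\cf(Y_{\alpha,\eta(I',i)})$. The part of $D$ coming from paths whose last coordinate is constantly equal to $i$ endows $X^i$ with a differential $D^i$, and $(X^i,D^i)$ is precisely the link surgeries complex for the $(m-1)$-component sublink $L_1\cup\cdots\cup L_{m-1}$ sitting inside the $3$-manifold $Y^i$ obtained by $I_m=i$ surgery on $L_m$ in $Y$. The inductive hypothesis therefore yields $H_*(X^i,D^i)\cong\hf(Y^i)$. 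The remaining off-diagonal part of $D$ is a chain map $F\colon(X^0,D^0)\to(X^1,D^1)$ assembled from path contributions in which the last coordinate transitions from $0$ to $1$ exactly once, and $(X,D)$ is the mapping cone of $F$.

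The main obstacle, and the heart of the argument, is to identify $F$ (up to the inductive quasi-isomorphisms $X^i\simeq\cf(Y^i)$) with the surgery triangle map $\cf(Y^0)\to\cf(Y^1)$ for the $0\to1$ transition on $L_m$. Terms in $F$ arising from paths that toggle only the last coordinate directly yield the natural triangle map between Heegaard diagrams for $Y^0$ and $Y^1$; the remaining terms, in which the path also modifies earlier coordinates, must be shown to be null-homotopic. I would treat these by the same analysis of ends of one-dimensional polygon moduli spaces that underlies Proposition~\ref{prop:bound}: the compatibility conditions on $\{J_n\}$ ensure that the would-be-spurious ends pair off, producing the required chain homotopies. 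The cleanest organizational framework for this bookkeeping is via hyperboxes of strongly equivalent complexes, in which each $(X^i,D^i)$ is strongly equivalent to $\cf(Y^i)$ and $F$ is transported along these equivalences to the surgery triangle map. Once this identification is in place, the surgery exact triangle applied to $L_m$ identifies the homology of the mapping cone of $F$ with $\hf(Y_\infty)=\hf(Y)$, completing the induction.
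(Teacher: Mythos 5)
The theorem you are asked to prove is not proved in this paper at all; Baldwin cites it directly from Ozsv{\'a}th--Szab{\'o} \cite[Theorem 4.1]{osz12}, so the relevant comparison is with the argument in that source.

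Your inductive framework (induct on the number $m$ of link components, base case the surgery exact triangle) is in the same spirit as the argument in \cite{osz12}, but there is a genuine gap at the heart of your inductive step, and the route you sketch for filling it is not quite right. You decompose $F=F_{\mathrm{main}}+F_{\mathrm{corr}}$, where $F_{\mathrm{main}}$ counts triangles coming from paths that only toggle the last coordinate and $F_{\mathrm{corr}}$ collects the higher polygon contributions, and you propose to ``show that $F_{\mathrm{corr}}$ is null-homotopic.'' This is not well-posed: $F_{\mathrm{main}}$ is not itself a chain map $X^0\to X^1$ (it fails to commute with the link-surgery differentials $D^0,D^1$), and it is precisely the higher polygon terms $F_{\mathrm{corr}}$ that cure this failure, so $F_{\mathrm{corr}}$ is not a chain map either, let alone null-homotopic. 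Relatedly, the claim that each $(X^i,D^i)$ is ``strongly equivalent'' to $\cf(Y^i)$ overstates what induction gives you: the inductive hypothesis only supplies a quasi-isomorphism on homology, and the map relating $X^i$ to $\cf(Y^i)$ is an iterated cone, not a filtered or ``strong'' equivalence. Because those quasi-isomorphisms are implicit, transporting $F$ along them and identifying the result with the cobordism map $\hf(Y^0)\to\hf(Y^1)$ is exactly the point that requires proof, and your outline does not supply it.

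What \cite{osz12} actually does is reverse the order of operations so that this identification problem never arises. Rather than first collapsing $X^0$ and $X^1$ to $\cf(Y^0)$ and $\cf(Y^1)$ and then trying to recognize the connecting map, they apply an \emph{exact triangle detection lemma} (their Lemma 4.4) directly at the chain level to the cyclic sequence of complexes $X_{*\infty}\to X_{*0}\to X_{*1}\to X_{*\infty}\to\cdots$, where $X_{*j}$ is the sub/quotient complex over $\{0,1\}^{m-1}\times\{j\}$ and the arrows are the chain maps given by the components of $D$ that advance the last coordinate. The hypotheses of the lemma --- that consecutive composites $f_{i+1}\circ f_i$ are null-homotopic via maps $H_i$, and that $f_{i+2}\circ H_i + H_{i+1}\circ f_i$ is a quasi-isomorphism --- are verified by the holomorphic polygon counts with $\htheta$ inputs (their Lemma 4.5, which is the ``small polygon'' computation that Baldwin invokes in the proof of Proposition~\ref{prop:chain}). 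The conclusion is a chain-level quasi-isomorphism $X=\mathrm{Cone}(X_{*0}\to X_{*1})\simeq X_{*\infty}$, and $X_{*\infty}$ is precisely the link-surgery complex for the $(m-1)$-component sublink in $Y$ itself. Induction then finishes the argument, applied to $X_{*\infty}$ rather than to $X^0$ and $X^1$ separately. To repair your outline you would want to replace the homology-level ``identify $F$ with the triangle map'' step with this chain-level cone detection argument, which sidesteps the need to make the inductive quasi-isomorphisms explicit.
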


There is a grading on $X$ defined, for $\bx \in \cf(Y_{\alpha,\eta(I)})$, by $h'(\bx) = I_1+\dots+I_m.$ This grading gives rise to a filtration of the complex $(X,D)$. This filtration, in turn, gives rise to what is known as \emph{the link surgeries spectral sequence}. The portion $D^0$ of $D$ which does not increase $h'$ is just the sum of the boundary maps $D_{I,I}$. Therefore, $$E^1(X) \cong \bigoplus_{I\in \{0,1\}^m} \hf(Y_{\alpha,\eta(I)}).$$ Observe that the spectral sequence differential $D^1$ on $E^1(X)$ is the sum of the maps $$(D_{I,I'})_*:\hf(Y_{\alpha,\eta(I)})\rightarrow \hf(Y_{\alpha,\eta(I')})$$ over all pairs $I,I'$ for which $I'$ is an immediate successor of $I$. If $I'$ is an immediate successor of $I$ with $I_j<I_j'$, then $Y_{\alpha,\eta(I')}$ is obtained from $Y_{\alpha,\eta(I)}$ by changing the surgery coefficient on $L_j$ from $0$ to $1$, and $(D_{I,I'})_*$ is the map induced by the corresponding 2-handle cobordism.  By Theorem \ref{thm:conv}, this spectral sequence converges to $E^{\infty} = \hf(Y)$.

This spectral sequence behaves nicely under connected sums. Specifically, if $\bL_1\in Y_1$ and $\bL_2\in Y_2$ are framed links, then a pointed multi-diagram for the disjoint union $$\bL_1\cup \bL_2\in Y_1 \# Y_2$$ can be obtained by taking a connected sum of pointed multi-diagrams for $\bL_1$ and $\bL_2$ near their basepoints, and then replacing these two basepoints with a single basepoint contained in the connected sum region. Let $(X_1,D_1)$ and $(X_2,D_2)$ denote the complexes associated to the multi-diagrams for $\bL_1$ and $\bL_2$, and let $(X_{\#}, D_{\#})$ denote the complex associated to the multi-diagram for $\bL_1 \cup \bL_2$, obtained as above. The lemma below follows from standard arguments in Heegaard Floer homology (see \cite[Section 6]{osz14}). 

\begin{lemma}
\label{lem:connect}
$(X_{\#},D_{\#})$ is the tensor product $(X_1,D_1)\otimes(X_2,D_2)$. In particular, $E^k(X_{\#}) = E^k(X_1)\otimes E^k(X_2)$.
\end{lemma}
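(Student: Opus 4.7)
The plan is to use a neck-stretching argument along the connected-sum sphere $S$ in $\Sigma_1 \# \Sigma_2$. First I identify the underlying vector spaces. By construction, $\alpha = \alpha_1 \cup \alpha_2$ and, for every $I = (I_1, I_2)$, $\eta(I) = \eta_1(I_1) \cup \eta_2(I_2)$, with all curves on side $k$ lying on $\Sigma_k$. Consequently every intersection point in $\T_\alpha \cap \T_{\eta(I)} \subset Sym^{g_1 + g_2}(\Sigma_1 \# \Sigma_2)$ decomposes as a pair $(\bx_1, \bx_2)$ with $\bx_k \in \T_{\alpha_k} \cap \T_{\eta_k(I_k)}$, giving $X_\# \cong X_1 \otimes X_2$ as a graded vector space. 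The $h'$-filtration on $X_\#$ is manifestly the sum of the $h'$-filtrations on the two factors.

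Next I analyze the differential $D_\#$. The top-Maslov generators factor compatibly: for a step $I^i < I^{i+1}$ that changes only side-$1$ coordinates, $\htheta_{i, i+1}$ splits as $\htheta_{i, i+1}^{(1)} \otimes \htheta_{i, i+1}^{(2)}$, where the second factor is the top-Maslov generator between small Hamiltonian translates of the relevant attaching curves on side $2$. Stretching $S$ sufficiently, each pseudo-holomorphic $n$-gon in $Sym^{g_1+g_2}(\Sigma_1 \# \Sigma_2)$ with $n_z(\phi) = 0$ degenerates into a pair of pseudo-holomorphic $n$-gons, one in each $Sym^{g_k}(\Sigma_k)$, so that the polygon counts factor. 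I then invoke the standard fact that a polygon map with an input $\htheta$ between small Hamiltonian translates is chain-homotopic to the Hamiltonian continuation isomorphism---a consequence of $\mathcal{A}_\infty$-associativity together with the multiplicativity $\mu_2(\htheta, \htheta) = \htheta$---which collapses the ``trivial'' steps on each side. A combined path from $I$ to $I'$ is an interleaving of its projections $p_1, p_2$ onto the two sides, and its polygon contribution reduces to $D^{(1)}_{p_1} \otimes D^{(2)}_{p_2}$.

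Summing over all combined paths---equivalently, over all pairs $(p_1, p_2)$ together with their interleavings---the differential $D_\#$ agrees with the tensor-product differential $D_1 \otimes \mathrm{id} + \mathrm{id} \otimes D_2$ on $(X_1, D_1) \otimes (X_2, D_2)$, up to a filtered chain homotopy that absorbs the mixed cross-terms via the $\mathcal{A}_\infty$-relations on either side. Since this equivalence is filtered and $h'_\# = h'_1 + h'_2$, the K\"unneth formula for spectral sequences of filtered complexes yields $E^k(X_\#) \cong E^k(X_1) \otimes E^k(X_2)$ for all $k \geq 2$. The main technical obstacle is the combinatorial bookkeeping of the shuffles together with the $\htheta$-collapsing argument; this mirrors the proof of the classical connected-sum formula $\hf(Y_1 \# Y_2) \cong \hf(Y_1) \otimes \hf(Y_2)$, now extended to the full link-surgery $\mathcal{A}_\infty$-complex.
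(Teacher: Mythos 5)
The paper gives no proof of Lemma 3.3; it cites ``standard arguments'' and \cite[Section 6]{osz14}. Your outline supplies the missing argument with the right ingredients — pairing of intersection points, splitting of domains, a Künneth formula for spectral sequences over a field — and a neck-stretching mechanism for the moduli space factorization, which is a legitimate (if slightly heavier) alternative to the more direct observation that placing $z$ in the connect-sum tube already forces any class with $n_z(\phi)=0$ to have domain splitting as $\phi_1 \sqcup \phi_2$ on the two sides, with the image of a holomorphic representative landing in $Sym^{g_1}(\Sigma_1') \times Sym^{g_2}(\Sigma_2') \subset Sym^{g_1+g_2}(\Sigma_1\#\Sigma_2)$.

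However, the claim that a single combined path $p$ has ``polygon contribution $D^{(1)}_{p_1}\otimes D^{(2)}_{p_2}$'' is where the argument has a genuine gap. If each of the $\binom{|p_1|+|p_2|}{|p_1|}$ interleavings of the projections $p_1$, $p_2$ truly contributed the same tensor $D^{(1)}_{p_1}\otimes D^{(2)}_{p_2}$ (after collapsing trivial $\htheta$'s), then for $|p_1|=1$, $|p_2|=2$ the sum over interleavings would produce an \emph{odd} multiple of a cross-term $D^{(1)}_{p_1}\otimes D^{(2)}_{p_2}$ with both factors nontrivial — a term that does not appear in $D_1\otimes\mathrm{id}+\mathrm{id}\otimes D_2$. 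What actually happens is more delicate: for an $n$-gon with split domain $\phi=\phi_1\sqcup\phi_2$, the moduli space is a fiber product over the $(n-3)$-dimensional space $\Co(\mathbf{P}_n)$ (the two factors must share the same conformal structure), so the count is a correlator rather than a product, and the individual $D_p$'s genuinely depend on the interleaving. The cancellation of cross-terms has to come either from a moduli-space/gluing analysis showing that the only rigid configurations occur when the side-$2$ (or side-$1$) polygon is a canonical small polygon between Hamiltonian translates, or from a carefully engineered filtered chain homotopy. Invoking ``$\mathcal{A}_\infty$-relations ... absorb the mixed cross-terms'' names the right phenomenon but does not identify the homotopy or show the mixed terms vanish; this step needs to be made precise before the argument is complete. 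The final appeal to the Künneth formula for spectral sequences of filtered complexes over $\zzt$ is fine once a filtered chain homotopy equivalence with $(X_1,D_1)\otimes(X_2,D_2)$ is in hand, since by Lemma \ref{lem:filt} a filtered quasi-isomorphism at $E^1$ already suffices.
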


\section{Computing spectral sequences}
\label{sec:ss}
In this section, we provide a short review of the ``cancellation lemma," and describe how it is used to think about and compute spectral sequences.

\begin{lemma}[\rm{see \cite[Lemma 5.1]{ras}}] 
\label{lem:cancel}
Suppose that $(C,d)$ is a complex over $\zzt$, freely generated by elements $x_i,$ and let $d(x_i,x_j)$ be the coefficient of $x_j$ in $d(x_i)$. If $d(x_k,x_l)=1,$ then the complex $(C',d')$ with generators $\{x_i| i \neq k,l\}$ and differential $$d'(x_i) = d(x_i) + d(x_i,x_l)d(x_k)$$ is chain homotopy equivalent to $(C,d)$. The chain homotopy equivalence is induced by the projection $\pi:C\rightarrow C'$, while the equivalence $\iota: C'\rightarrow C$ is given by $\iota(x_i) = x_i + d(x_i,x_l)x_k$.
\end{lemma}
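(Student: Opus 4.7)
The plan is to produce an explicit chain homotopy equivalence between $(C,d)$ and $(C',d')$ by writing down all four ingredients (projection, inclusion, and a homotopy) and reducing every compatibility identity to $d^2=0$.

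First I would define the projection $\pi:C\to C'$ by $\pi(x_i)=x_i$ for $i\neq k,l$, by $\pi(x_k)=0$, and by
\[
\pi(x_l) \;=\; \sum_{i\neq k,l} d(x_k,x_i)\,x_i,
\]
so that in particular $\pi(\iota(x_i))=\pi(x_i)+d(x_i,x_l)\pi(x_k)=x_i$, giving $\pi\circ\iota=\mathrm{id}_{C'}$ immediately. The formula for $\pi(x_l)$ is forced by the requirement that $\pi$ be a chain map: since $d(x_k)=x_l+\sum_{i\neq l}d(x_k,x_i)x_i$, the element $x_l+\pi(x_l)+d(x_k,x_k)x_k$ must be killed by $\pi$, and the definition above is the unique choice (modulo the already-killed generators) that makes this consistent.

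Next I would verify that $(C',d')$ is itself a complex, i.e.\ $(d')^2=0$, and that $\pi$ and $\iota$ are chain maps (so $d'\pi=\pi d$ and $d\iota=\iota d'$). Each of these reduces, after expansion, to reading off a coefficient of $d^2=0$ in $C$. For instance, the coefficient of $x_j$ (with $j\neq k,l$) in $(d')^2(x_i)$ collects exactly the terms
\[
\sum_{r\neq k,l} d(x_i,x_r)d(x_r,x_j) \;+\; d(x_i,x_l)\sum_{r\neq l}d(x_k,x_r)d(x_r,x_j) \;+\; \bigl(\text{correction involving }d(x_i,x_k)\bigr),
\]
which after a short manipulation is the coefficient of $x_j$ in $d^2(x_i)+d(x_i,x_l)\,d^2(x_k)$, hence vanishes. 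The same principle handles the chain map checks for $\pi$ and $\iota$.

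The core step is producing the chain homotopy $\iota\circ\pi\simeq\mathrm{id}_C$. Define $H:C\to C$ by $H(x_l)=x_k$ and $H(x_i)=0$ for $i\neq l$; I would then verify $(dH+Hd)(x)=(\mathrm{id}+\iota\pi)(x)$ case by case on generators. The cases $x_i$ for $i\neq k,l$ and $x_k$ are immediate. The case $x=x_l$ is the interesting one: one computes
\[
(dH+Hd)(x_l) \;=\; d(x_k)+d(x_l,x_l)\,x_k, \qquad (\mathrm{id}+\iota\pi)(x_l) \;=\; x_l+\sum_{i\neq k,l} d(x_k,x_i)\bigl(x_i+d(x_i,x_l)x_k\bigr),
\]
and the difference of the two sides is exactly $\bigl(d(x_k,x_k)+d(x_l,x_l)+\sum_{i\neq k,l}d(x_k,x_i)d(x_i,x_l)\bigr)x_k$, which is the coefficient of $x_l$ in $d^2(x_k)=0$ (using $d(x_k,x_l)=1$).

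The main obstacle is simply bookkeeping: choosing the right $\pi$ on $x_l$ so that $\pi$ is a chain map, and then keeping track of the diagonal coefficients $d(x_k,x_k)$, $d(x_l,x_l)$ which $\mathbb{F}_2$ coefficients allow to be nonzero but which ultimately cancel via $d^2=0$. Once those signs (or rather parities) are handled, every verification collapses to extracting a single coefficient from $d^2(x_k)=0$ or $d^2(x_i)=0$, and the lemma follows.
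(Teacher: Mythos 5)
The paper cites this lemma to Rasmussen without reproducing a proof, so there is no internal argument to compare against; your proof is a correct and complete version of the standard argument. Your choices of $\pi$, $\iota$, and the homotopy $H$ (with $H(x_l)=x_k$ and $H=0$ elsewhere) are exactly the usual Gaussian-elimination construction, and I verified that each identity — $\pi\iota=\mathrm{id}$, $d'\pi=\pi d$, $d\iota=\iota d'$, $(d')^2=0$, and $dH+Hd=\mathrm{id}+\iota\pi$ — does reduce on generators to a coefficient of $d^2(x_i)=0$ or $d^2(x_k)=0$, with the diagonal terms $d(x_k,x_k)$ and $d(x_l,x_l)$ cancelling as you describe; the only thing worth tightening is that your displayed expansion of $(d')^2$ leaves the cross-term involving $\sum_{r\neq k,l}d(x_k,x_r)d(x_r,x_l)$ behind the words ``correction involving $d(x_i,x_k)$,'' and it would be cleaner to write it out, but the claim is correct.
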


 We say that $(C',d')$ is obtained from $(C,d)$ by ``canceling" the component of the differential $d$ from $x_k$ to $x_l$. Lemma \ref{lem:cancel} admits a refinement for filtered complexes. In particular, suppose that there is a grading on $C$ which induces a filtration of the complex $(C,d)$, and let the elements $x_i$ be homogeneous generators of $C$. If $d(x_k,x_l) = 1$, and $x_k$ and $x_l$ have the same grading, then the complex obtained by canceling the component of $d$ from $x_k$ to $x_l$ is \emph{filtered} chain homotopy equivalent to $(C,d)$ since both $\pi$ and $\iota$ are filtered maps in this case. 
 
  \begin{figure}[!htbp]

 \labellist 
\small\hair 2pt 
\pinlabel $(a)$ at 69 190 
\pinlabel $(b)$ at 298 190
\pinlabel $(c)$ at 527 190 
\pinlabel $(d)$ at 759 190
\pinlabel $1$ [r] at 12 150
\pinlabel $0$ [r] at 12 78 
\pinlabel $-1$ [r] at 12 8
\pinlabel $x$ at 119 140
\pinlabel $y$ at 409 65
\pinlabel $z$ at 567 36
\pinlabel $w$ at 777 79

\endlabellist 
\begin{center}
\includegraphics[width=12cm]{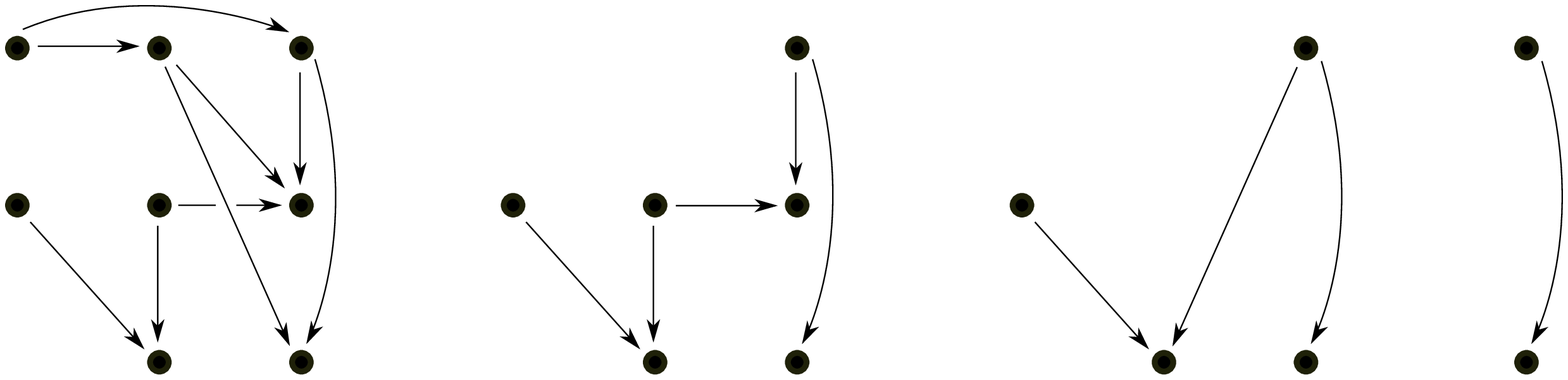}
\caption{\quad The diagram in $(a)$ represents a graded complex $C$, where the grading of a generator is given by $1$, $0$, or $-1$. This grading induces a filtration $\mathcal{F}_{-1} \subset \mathcal{F}_0 \subset \mathcal{F}_1 = C$. The complex in $(b)$ is obtained from that in $(a)$ by canceling the component $x$ of the differential. The complex in $(c)$ is obtained from that in $(b)$ by canceling $y$. This graded vector space represents the $E^1$ term of the spectral sequence associated to the filtration of $C$. The complex in $(d)$ is obtained from that in $(c)$ by canceling $z$, and it represents the $E^2$ term of the spectral sequence. The $E^3 = E^{\infty}$ term of the spectral sequence is trivial, and is obtained from the complex in $(d)$ by canceling $w$.}
\label{fig:complex}
\end{center}
\end{figure}

Computing the spectral sequence associated to such a filtration is the process of performing cancellation in a series of stages until we arrive at a complex in which the differential is zero (the $E^{\infty}$ term). The $E^n$ term records the result of this cancellation after the $n$th stage. Specifically, the $E^0$ term is simply the graded vector space $C = \bigoplus C_i$. The $E^1$ term is the graded vector space $C^{(1)}$, where $(C^{(1)},d^{(1)})$ is obtained from $(C,d)$ by canceling the components of $d$ which do not shift the grading. For $n>1$, the $E^n$ term is the graded vector space $C^{(n)}$, where $(C^{(n)},d^{(n)})$ is obtained from $(C^{(n-1)},d^{(n-1)})$ by canceling the components of $d^{(n-1)}$ which shift the grading by $n-1$. Though it is implicit here, the spectral sequence differential $d^k$ is the sum of the components of $d^{(k)}$ which shift the grading by $k$. See Figure \ref{fig:complex} for an illustration of this process (in this diagram, the generators are represented by dots and the components of the differential are represented by arrows).

Now, suppose that $F:(C_1,d_1) \rightarrow (C_2,d_2)$ is a filtered chain map, and let $E^n(C_j)$ denote the $n$th term in the spectral sequence associated to the filtration of $(C_j,d_j)$. Every time we cancel a component of $d_1$ or $d_2$, we may adjust the components of $F$ as though they were components of a differential (in fact, they \emph{are} components of the mapping cone differential). In this way, we obtain an adjusted map $F^{(n)}:(C_1^{(n)},d_1^{(n)}) \rightarrow (C_2^{(n)},d_2^{(n)})$ for each $n \geq 1$. The map from $E^n(C_1)$ to $E^n(C_2)$ induced by $F$ is, by definition, the sum of the components of $F^{(n)}$ which do not shift the grading. With this picture in mind, the following well-known fact is easy to verify.

\begin{lemma}
\label{lem:filt}
If $F:(C_1,d_1) \rightarrow (C_2,d_2)$ is a filtered chain map which induces an isomorphism from $E^n(C_1)$ to $E^n(C_2)$, then $F$ induces an isomorphism from $E^k(C_1)$ to $E^k(C_2)$ for all $k\geq n$. If $n=1$ in the statement above, then $(C_1,d_1)$ is filtered chain homotopy equivalent to $(C_2,d_2)$.
\end{lemma}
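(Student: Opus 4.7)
The plan is to reduce both parts of the lemma to a single computation with the mapping cone of $F$. Let $M = \mathrm{Cone}(F)$ denote the complex whose underlying vector space is $C_1 \oplus C_2$ and whose differential $d_M$ combines $d_1$, $d_2$, and $F$. Because $F$ is a filtered chain map, the filtrations on $C_1$ and $C_2$ assemble into a filtration on $M$, and the tautological short exact sequence $0 \to C_2 \to M \to C_1 \to 0$ respects this filtration. Standard spectral sequence machinery then yields, on each page, a long exact sequence
$$\cdots \to E^k(C_1) \xrightarrow{F_*^k} E^k(C_2) \to E^k(M) \to E^k(C_1) \to \cdots$$
in which the connecting homomorphism is precisely $F_*^k$. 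Through the cancellation picture described in this section, this LES is essentially tautological: treating $F$ as one of the components of $d_M$, after $k-1$ stages of cancellation the grading-preserving part of $F^{(k)}$ is exactly what survives as the $E^k$-level map.

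For the first claim, if $F_*^n$ is an isomorphism then the LES immediately forces $E^n(M) = 0$. Since $E^{k+1}$ is the homology of $(E^k, d^k)$, this vanishing propagates to $E^k(M) = 0$ for all $k \geq n$, and reading the LES in the other direction now shows that $F_*^k$ is an isomorphism for every $k \geq n$.

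For the $n=1$ refinement, the vanishing $E^1(M) = 0$ says, in the language of the cancellation picture, that after performing all grading-preserving cancellations on $M$ the resulting complex is zero. Each such cancellation pairs two generators in the same filtration level, so by the filtered refinement of Lemma \ref{lem:cancel} noted in this section, $M$ is filtered chain homotopy equivalent to the zero complex, i.e. filtered chain contractible. A filtered null-homotopy of $\mathrm{id}_M$ then unpacks in the standard way into a filtered chain map $G \colon C_2 \to C_1$ together with filtered chain homotopies verifying $GF \simeq \mathrm{id}_{C_1}$ and $FG \simeq \mathrm{id}_{C_2}$, exhibiting $F$ as a filtered chain homotopy equivalence.

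The main obstacle I anticipate is the bookkeeping required to identify the connecting map in the filtered LES with $F_*^k$ on every page; in the cancellation framework this becomes formal, but it needs a check that the lower-filtration corrections accumulated during the first $k-1$ rounds of cancellation on $M$ do not contaminate the map on the $E^k$ page. The final passage from filtered contractibility of $M$ to a filtered chain homotopy equivalence between $(C_1, d_1)$ and $(C_2, d_2)$ is standard homological algebra, but is also worth recording explicitly given the non-standard formalism of this section.
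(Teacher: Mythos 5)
Your mapping cone strategy is workable for the $n=1$ refinement, but the claimed long exact sequence relating $E^k(C_1)$, $E^k(C_2)$, and $E^k(M)$ on \emph{every} page is not standard, and is in fact false for $k\geq 2$. The exactness of the mapping cone triangle survives to the $E^1$ page (it is just the LES in homology of the mapping cone of $F^0$ on the associated graded complexes), but the functor $E^k$ for $k\geq 2$ is not exact, and higher pages do not form a long exact triangle in general. Here is a concrete counterexample: let $C_1 = \langle a\rangle$ in grading $0$ with $d_1 = 0$, and $C_2 = \langle c,e,g\rangle$ in gradings $0,1,2$ with $d_2(c) = e$. Take $F(a) = g$; this is a filtered chain map. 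One checks that $E^3(C_1) = \langle a\rangle$, $E^3(C_2) = \langle g\rangle$, the induced map $F^3_*$ is zero (since $F^{(3)}(a) = g$ shifts the grading by $2$), and yet $E^3(M) = 0$ (in $M$ the pairs $c\mapsto e$ and $a\mapsto g$ cancel in successive rounds). The claimed exact triangle on page $3$ would force $\operatorname{rk} E^3(M) \geq 2$, a contradiction.

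For the first claim, the LES detour is unnecessary and the direct argument avoids the gap entirely. In the cancellation picture, $F^k_*\colon (E^k(C_1),d^k_1) \to (E^k(C_2),d^k_2)$ is a chain map, and $F^{k+1}_*$ is precisely the map induced by $F^k_*$ on homology. So if $F^n_*$ is an isomorphism of chain complexes, then $F^{n+1}_*$ is an isomorphism (homology of an isomorphism is an isomorphism), and by induction $F^k_*$ is an isomorphism for all $k\geq n$. No mapping cone is needed.

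For the $n=1$ statement, your argument is sound: the LES on the $E^1$ page is valid, so $F^1_*$ an isomorphism forces $E^1(M) = 0$; in the cancellation picture this means every generator of $M$ is eliminated by a grading-preserving cancellation, so $M$ is filtered chain homotopy equivalent to $0$; and filtered contractibility of $\mathrm{Cone}(F)$ unwinds to a filtered chain homotopy inverse for $F$ by the usual algebra. You should simply prove the first claim directly and reserve the mapping cone for the $n=1$ case, rather than invoking a long exact sequence that does not hold past the first page.
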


\section{Independence of multi-diagram and analytic data}
\label{sec:ind}

The complex $(X,D)$ associated to a framed link $\bL\subset Y$ depends on a choice of multi-diagram compatible with $\bL$ and a choice of auxiliary analytic data -- namely, the complex structure $\mathfrak{j}$ and the maps $\{J_n\}_{n\geq 2}$. However, we shall see in this section that the filtered chain homotopy type of $(X,D)$ does not.

\begin{theorem}[\cite{lrob3}]
\label{thm:multi}The filtered chain homotopy type of $(X,D)$ is independent of the choice multi-diagram compatible with $\bL$.
\end{theorem}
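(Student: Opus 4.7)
The plan is to follow the standard Ozsv{\'a}th--Szab{\'o} invariance strategy: reduce to a finite list of elementary moves connecting any two compatible multi-diagrams, construct an explicit filtered chain map between the associated complexes for each move, and verify that each such map induces an isomorphism on $E^1$, whence Lemma \ref{lem:filt} promotes it to a filtered chain homotopy equivalence.

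First, I would show that any two pointed multi-diagrams compatible with $\bL$ can be connected by a finite sequence of the following moves: (i) isotopy of any attaching curve avoiding $z$; (ii) handleslide of an attaching curve over another in the same family, subject to the constraint that the subordinate structure is preserved and the meridian/framing conditions remain intact; (iii) stabilization of the Heegaard surface (together with the induced pair of dual curves added to every family $\alpha$ and $\eta(I)$); and (iv) change of the small Hamiltonian translates used to form the various families $\eta(I)$. The point is that once we have chosen a bouquet $\Gamma$ for $\bL$, the complement $Y - \text{int}\, N(\Gamma)$ is well-defined up to isotopy, its Heegaard diagrams are connected by the classical Reidemeister--Singer moves, and the distinguished curves $\beta_i,\gamma_i,\delta_i$ in the punctured tori $F_i$ are determined by the meridian and framing data up to isotopy; different choices of bouquet can be related by isotopies of the arcs joining the link components to the reference point, which at the Heegaard level translate to further moves of types (i) and (ii).

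Second, for each elementary move I would construct a chain map mirroring the definition of $D$ itself. For a handleslide taking $\alpha$ to $\alpha'$ (the other cases are analogous), fix the top-graded generator $\htheta_{\alpha',\alpha}\in\T_{\alpha'}\cap\T_{\alpha}$ and define
$$F:X\rightarrow X',\qquad F(\bx)=\sum_{I = I^2 < \dots < I^n = I'} f_{\alpha',\alpha,\eta(I^2),\dots,\eta(I^n)}\bigl(\htheta_{\alpha',\alpha}\otimes \bx \otimes \htheta_{2,3}\otimes\dots\otimes\htheta_{n-1,n}\bigr),$$
summed over all paths starting from the index $I$ of $\bx$. For a move affecting curves in the families $\eta(I)$, one inserts the analogous top-graded $\htheta$ factor in the appropriate position among the inputs, with the sum ranging over paths compatible with the move. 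By construction the ``diagonal'' summand ($n=2$) preserves the index $I$, and the higher summands only raise $h'$, so $F$ is automatically a filtered map.

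Third, the chain map identity $F\circ D = D'\circ F$ is verified by counting the ends of the 1-dimensional moduli spaces of $(n+1)$-gons obtained by inserting the extra $\htheta_{\alpha',\alpha}$. As in the proof sketch of Proposition \ref{prop:bound}, after taking the union over all paths the degenerations that do not correspond to summands of $F\circ D$ or $D'\circ F$ cancel in pairs, using the fact that $\htheta_{\alpha',\alpha}$ represents a cycle in $\cf(\Sigma,\alpha',\alpha,z)$ and the associahedron compatibility of the $\{J_n\}$ from Section \ref{sec:mdiags}. On $E^1$ the map $F$ restricts on each summand to the standard triangle-count handleslide (respectively, isotopy or stabilization) quasi-isomorphism $\hf(Y_{\alpha,\eta(I)})\rightarrow\hf(Y_{\alpha',\eta(I)})$, which is an isomorphism by the original Ozsv{\'a}th--Szab{\'o} Heegaard Floer invariance theorems. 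Lemma \ref{lem:filt} then upgrades $F$ to a filtered chain homotopy equivalence, and composing equivalences across a sequence of elementary moves proves the theorem.

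The main obstacle is the bookkeeping in step three: because $F$, $D$, and $D'$ are each sums of polygon maps with arbitrarily many inputs, one must carefully enumerate the ends of a 1-parameter family of $(n+1)$-gons and check that every end either lies in $F\circ D$, in $D'\circ F$, or is paired with another ``spurious'' end arising from a different path through $\{\infty,0,1\}^m$. Conceptually this is the same $\mathcal{A}_\infty$-style cancellation that underlies Proposition \ref{prop:bound}, but with many simultaneously changing families of attaching curves it becomes technically delicate; the stabilization case is further complicated by the need to justify that the new pair of dual curves contributes a tensor factor that survives all the higher polygon maps, which requires a neck-stretching argument of the sort developed in \cite{osz8}.
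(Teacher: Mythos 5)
Your proposal is correct and follows essentially the same route the paper takes, namely the one it attributes to Roberts \cite{lrob3}: relate compatible multi-diagrams by elementary moves, construct for each move an $h'$-filtered chain map by inserting an extra top-graded $\htheta$ into the polygon counts defining $D$, verify that the diagonal part recovers the standard Heegaard Floer continuation/handleslide/stabilization quasi-isomorphisms so that the induced map on $E^1$ is an isomorphism, and then invoke Lemma \ref{lem:filt}. The paper itself gives no more detail than this outline, so your proposal is a faithful (and somewhat more explicit) sketch of the intended argument.
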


To see this, fix some choice of analytic data and suppose that $H$ and $H'$ are two multi-diagrams compatible with $\mathbb{L}$, giving rise to the complexes $(X,D)$ and $(X',D')$. Then $H$ and $H'$ are related by a sequence of isotopies, handleslides, and (de-) stabilizations. Roberts proves that there is an $h'$-filtered chain map $\Phi:(X,D) \rightarrow (X',D')$ corresponding to this sequence of operations which induces an isomorphism between the $E^1$ terms of the associated spectral sequences \cite[Section 7]{lrob3}. It follows that $(X,D)$ is filtered chain homotopy equivalent to $(X',D')$ by Lemma \ref{lem:filt}. In particular, $E^k(X) \cong E^k(X')$ for all $k\geq 1$. 

In this section, we prove the following theorem, which has been known to experts for some time, but has not been written down rigorously (though Roberts offers a sketch in \cite{lrob3}). Our proof is similar in spirit to Roberts' proof of Theorem \ref{thm:multi}.

\begin{theorem}
\label{thm:analytic}
The filtered chain homotopy type of $(X,D)$ is independent of the choice of analytic data $\mathfrak{j}$ and $\{J_n\}_{n\geq 2}$.
\end{theorem}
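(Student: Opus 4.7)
The plan is to mirror Roberts' proof of Theorem \ref{thm:multi}, but with varying analytic data in place of multi-diagrams. Fix two sets of data $(\mathfrak{j}_0,\{J_{n,0}\})$ and $(\mathfrak{j}_1,\{J_{n,1}\})$, producing complexes $(X,D^0)$ and $(X,D^1)$ on the same underlying vector space $X$ (which depends only on the combinatorics of the multi-diagram). I would first choose a generic smooth path $\mathfrak{j}_u$ ($u\in[0,1]$) of complex structures on $\Sigma$, and then, inductively in $n$, build a family of maps $\widetilde{J}_n:\mathbf{P}_n\times\Co(\mathbf{P}_n)\times[0,1]\to\mathcal{U}$ whose slice $\widetilde{J}_{n,u}$ at each $u$ satisfies the compatibility conditions of Section \ref{sec:mdiags} relative to $\mathfrak{j}_u$, and which agrees with $J_{n,0}$ at $u=0$ and with $J_{n,1}$ at $u=1$. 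The inductive extension is possible because $\mathcal{U}$ is contractible, exactly as in the original construction of the $J_n$.

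Using this parametrized data I would define a continuation chain map $\Phi:X\to X$ as the sum, over all paths $I^2<\dots<I^n$ in $\{0,1\}^m$, of the maps
$$\Phi_{I^2<\dots<I^n}(\bx)=\sum_{\by\in\T_{\alpha}\cap\T_{\eta(I^n)}}\,\,\sum_{\{\phi\in\pi_2(\bx,\htheta_{2,3},\dots,\htheta_{n-1,n},\by)\,|\,\mu(\phi)=-1,\,n_z(\phi)=0\}}\#\mathcal{N}(\phi)\cdot\by,$$
where $\mathcal{N}(\phi)$ is the $0$-dimensional parametrized moduli space of pairs $(u,[v])$ with $v$ a $\widetilde{J}_{n,u}$-holomorphic representative of $\phi$. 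Because its components move $I$ only upward ($I^2\leq I^n$), $\Phi$ is automatically $h'$-filtered. The chain map relation $\Phi D^0=D^1\Phi$ would follow from the standard $\mathcal{A}_\infty$-style end-counting argument applied to the corresponding $1$-dimensional parametrized moduli spaces (those with $\mu(\phi)=0$), in direct parallel with the proof of Proposition \ref{prop:bound}: strip-breakings at the input and output ends of $\mathbf{P}_n$ contribute $\Phi D^0$ and $D^1\Phi$ respectively, interior conformal degenerations cancel in pairs after summing over paths, and the endpoint slices at $u=0,1$ recover the $n$-polygon components of $D^0$ and $D^1$, whose contributions are killed by the separately-known identities $(D^0)^2=(D^1)^2=0$.

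On $E^1=\bigoplus_I\hf(Y_{\alpha,\eta(I)})$ the only surviving pieces of $\Phi$ are the $h'$-preserving components $\Phi_{I,I}$. By construction each of these coincides with the standard Heegaard Floer continuation map for $\hf(Y_{\alpha,\eta(I)})$ associated to the path $\{(\mathfrak{j}_u,J_{2,u})\}_{u\in[0,1]}$, and is therefore an isomorphism on homology by the usual invariance result. Hence $\Phi$ induces an isomorphism $E^1(X,D^0)\xrightarrow{\cong}E^1(X,D^1)$, and Lemma \ref{lem:filt} upgrades this to a filtered chain homotopy equivalence, giving the theorem. The main technical obstacle is the parametrized degeneration analysis: one must ensure that the family $\widetilde{J}_n$ can be chosen so that all compatibility conditions hold uniformly in $u$, and that the codimension-one boundary faces of $\mathscr{P}_n\times[0,1]$ are catalogued correctly so that the ends of the $1$-dimensional parametrized spaces decompose as claimed to yield the chain map identity.
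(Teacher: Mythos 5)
Your proposal takes a genuinely different route from the paper's, but the central step — the chain map relation $\Phi D^0 = D^1\Phi$ — does not actually follow from the end-counting argument you sketch. Your $\Phi$ counts pairs $(u,[v])$ in the $0$-dimensional parametrized moduli space $\mathcal{N}(\phi)$ for $\mu(\phi)=-1$, and to verify the chain-map identity you would look at ends of the $1$-dimensional spaces $\mathcal{N}(\phi)$ with $\mu(\phi)=0$. The problem is that when such a one-parameter family of polygons degenerates, the two resulting pieces are pseudo-holomorphic at the \emph{same interior} parameter value $u_0\in(0,1)$: one piece is rigid in the parametrized sense (index $-1$, hence a $\Phi$-term), while the other is a rigid (index $0$) polygon at time $u_0$. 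That latter piece is counted by neither $D^0$ nor $D^1$ — those arise only from the endpoint slices $u=0$ and $u=1$, which contribute their coefficients directly as boundary terms and are not ``killed by $(D^0)^2=(D^1)^2=0$.'' So the identity one actually extracts has the shape $D^0 + D^1 + (\text{interior breaking terms}) = 0$: this is the fingerprint of a bifurcation/cobordism argument, not of a chain-map relation. A $u$-parametrized family of $J$'s does not by itself produce a continuation chain map; for that one needs a \emph{domain-dependent} interpolation, and for $n$-gons with $n\ge 3$ there is no canonical place on the domain $\mathbf{P}_n$ to localize the transition from $J^{(0)}$ to $J^{(1)}$ in a way that is consistent with all conformal degenerations.

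The paper sidesteps exactly this difficulty by introducing small Hamiltonian translates $\lambda(I)$ of the $\eta(I)$ curves, treating $\eta$ and $\lambda$ as one multi-diagram indexed by $\{0,1\}^{m+1}$, and counting ordinary (unparametrized) holomorphic polygons with respect to hybrid structures $J_{n,k}$ that agree with $J_2^{(0)}$ on the first $k-1$ strips and with $J_2^{(1)}$ on the rest. The extra edge of each polygon — the one carrying the top-graded generator $\htheta$ between $\eta(\wt I)$ and its translate $\lambda(\wt I)$ — gives exactly the canonical anchor for the $J^{(0)}\to J^{(1)}$ transition, and the index $k$ in $J_{n,k}$ is built to move consistently under conformal degenerations (this is what the three compatibility conditions encode). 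The chain-map identity then follows by an honest $\mathcal{A}_\infty$ end-count, the $E^1$ isomorphism comes from the area-filtration argument for the resulting triangle maps, and Theorem \ref{thm:multi} is invoked at the end to remove the Hamiltonian perturbation. To salvage your approach you would in effect have to reinvent this marked-vertex bookkeeping, at which point you would be reproducing the paper's proof.
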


\begin{remark}
There is no direct analogue of a multi-diagram for the corresponding spectral sequence in monopole Floer homology (see \cite{bloom}). In that setting, the spectral sequence associated to a framed link depends \emph{a priori} only on a choice of metric and perturbation of the Seiberg-Witten equations on some 4-dimensional cobordism. And Bloom shows that, up to isomorphism, the spectral sequence is independent of these choices. 
\end{remark}

The rest of this section is devoted to proving Theorem \ref{thm:analytic}. Let $H$ be a pointed multi-diagram compatible with $\bL=L_1\cup\dots\cup L_m$, consisting of the Heegaard surface $\Sigma$; attaching curves $\alpha$ and $\eta(I)$ for $I\in \{\infty,0,1\}^m$; and a basepoint $z$. Let $H'$ be the pointed multi-diagram obtained from $H$ by replacing each curve $\eta(I)_j$ by a small Hamiltonian translate $\lambda(I)_j$. Now, fix a complex structure $\mathfrak{j}$ and suppose we are given two sets of analytic data, $\{J_n^{(0)}\}_{n\geq 2}$ and $\{J_n^{(1)}\}_{n\geq 2}$. We would like to construct a chain map from the complex $(X,D)$, associated to $H$ and the first set of analytic data, to the complex $(X',D')$, associated to $H'$ and the second set of analytic data. To do so, we first define maps $$J_{n,k}:\bP_n\times\Co(\Pn)\rightarrow \mathcal{U}$$ for $n\geq 3$ and $1\leq k\leq n$ which satisfy compatibility conditions similar to those described in Section \ref{sec:mdiags}. As in Section \ref{sec:mdiags}, we shall think of $J_{n,k}$ as a collection of maps defined on the standard conformal $n$-gons. 

If $P$ is a standard conformal $n$-gon, we first require that $J_{n,k}|_P$ agrees with $J^{(0)}_2$ on the strips $s_1,\dots,s_{k-1}$ and with $J^{(1)}_2$ on the strips $s_k,\dots,s_n$, and that $J_{n,1} = J_n^{(1)}$. Next, suppose that $P$ is obtained from $\Pn$ by stretching along some set of disjoint chords $C_1,\dots, C_{n-4}$. Suppose that $C_{i,j}$ is disjoint from these chords and consider the $n$-gon $P_T$ obtained from $P$ by replacing $C_{i,j}$ by the product $C_{i,j}\times[-T,T]$. The remaining compatibility conditions, which are to be satisfied for large $T$, are divided into three cases.

\begin{enumerate}
\item For $i<k$ and $j\leq k$, the restriction $J_{n,k}\,|\,_{{L_T}}$ agrees $J^{(0)}_{j-i+1}\circ\Phi_L\,|\,_{L_T}$; and the restriction $J_{n,k}\,|\,_{R_T}$ agrees with $J_{n-j+i+1, k-j+i+1}\circ \Phi_R\,|\,_{R_T}.$\\

\item For $i<k$ and $j>k$, the restriction $J_{n,k}\,|\,_{L_T}$ agrees with $J_{j-i+1,k-i+1}\circ\Phi_L\,|\,_{L_T}$; and the restriction $J_{n,k}\,|\,_{R_T}$ agrees with $J_{n-j+i+1, i}\circ \Phi_R\,|\,_{R_T}.$\\

\item For $i\geq k$, the restriction $J_{n,k}\,|\,_{L_T}$ agrees with $J^{(1)}_{n-j+i+1}\circ\Phi_L\,|\,_{L_T}$; and the restriction $J_{n,k}\,|\,_{R_T}$ agrees with $J_{n-j+i+1, k}\circ \Phi_R\,|\,_{R_T}.$ 
\end{enumerate}

We shall see in a bit where these conditions come. For the moment, consider the pointed multi-diagram $(\Sigma, \gamma^1,\dots,\gamma^r, z)$, and let $\bx_1,\dots,\bx_{n-1}$ and $\bx_n$ be intersection points in $\T_{\gamma^{j_1}}\cap \T_{\gamma^{j_2}}, \dots, \T_{\gamma^{j_{n-1}}}\cap \T_{\gamma^{j_n}}$ and $\T_{\gamma^{j_1}}\cap \T_{\gamma^{j_n}}$, respectively. For $\phi \in \pi_2(\bx_1,\dots,\bx_n)$, we denote by $\Mh_{J_{n,k}}(\phi)$ the moduli space of pairs $(u,\mathfrak{c})$, where $u:\Pn\rightarrow Sym^g(\Sigma)$ is homotopic to $\phi$ and $$du_p\circ i_{\mathfrak{c}} = J_{n,k}(p,\mathfrak{c})\circ du_p$$ for all $p \in \Pn$. We may then define a map $$f^{J_{n,k}}_{\gamma^{j_1},\dots,\gamma^{j_n}}:\cf(Y_{\gamma^{j_1},\gamma^{j_2}})\otimes\dots\otimes\cf(Y_{\gamma^{j_{n-1}},\gamma^{j_n}})\rightarrow \cf(Y_{\gamma^{j_1},\gamma^{j_n}})$$ exactly as in Section \ref{sec:mdiags}. 

To construct a chain map from $(X,D)$ to $(X',D')$, it is convenient to think of the g-tuples $\eta(I)$ and $\lambda(I)$ as belonging to one big multi-diagram. For $I\in\{0,1\}^{m+1}$, let $\wt I$ be the vector $(I_1,\dots,I_m) \in \{0,1\}^n$, and define 
$$\Lambda(I) = 
\left\{\begin{array}{ll}
\eta(\wt I) &\text{if $I_{m+1}=0,$}\\
\nu(\wt I) &\text{if $I_{m+1} = 1.$}
\end{array}\right.$$
For $I\leq I' \in \{0,1\}^{m+1}$ with $I_{m+1} = 0$ and $I'_{m+1} = 1$, we shall write $$I=I^2<\dots<\wh {I^k}<\dots<I^n=I'$$ to convey the fact that the last coordinate of $I^k$ is 0 and the last coordinate of $I^{k+1}$ is 1. We call this a \emph{marked path} from $I$ to $I'$. Given such a marked path, we define a map $$G_{I^2<\dots<\wh{I^k}<\dots<I^n}: \cf(Y_{\alpha,\Lambda(I^2)})\rightarrow \cf(Y_{\alpha,\Lambda(I_n)})$$ by $$G_{I^2<\dots<\wh{I^k}<\dots<I^n}(\bx) = f^{J_{n,k}}_{\alpha,\Lambda(I^2),\dots,\Lambda(I^n)}(\bx\otimes\htheta_{2,3}\otimes\dots\otimes\htheta_{n-1,n}),$$ where $\htheta_{i,j}$ is the unique intersection point in $ \T_{\Lambda(I^i)} \cap \T_{\Lambda(I^{j})}$ with highest Maslov grading. Let $G_{I,I'}$ be the sum over marked paths from $I$ to $I'$, $$G_{I,I'}=\sum_{I= I^2<\dots<\wh{I^k}<\dots<I^n = I'} G_{I^2<\dots<\wh{I^k}<\dots<I^n},$$ and define $G$ to be the sum $$G = \sum G_{I,I'}.$$ Note that $G_{I,I'}$ is a map from $\cf(Y_{\alpha,\eta(\wt I)})$ to $\cf(Y_{\alpha,\lambda(\wt I')})$; as such, $G$ is a map from $X$ to $X'$.

\begin{proposition}
\label{prop:chain}
$G:(X,D)\rightarrow(X',D')$ is a chain map; that is, $G\circ D = D'\circ G$.
\end{proposition}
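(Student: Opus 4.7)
The proof will run parallel to that of Proposition \ref{prop:bound}: I will identify the coefficient of any basis element $\by$ in $(G\circ D + D'\circ G)(\bx)$ with the total count, modulo~2, of boundary points of a disjoint union of one-dimensional moduli spaces of pseudo-holomorphic polygons, whence the coefficient vanishes by Gromov compactness.

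Fix $\bx \in \T_\alpha \cap \T_{\eta(\wt{I})}$ and a target $\by \in \T_\alpha \cap \T_{\lambda(\wt{I'})}$, with $I < I'$ in $\{0,1\}^{m+1}$, $I_{m+1}=0$, and $I'_{m+1}=1$. For each marked path $p = (I = I^2 < \dots < \wh{I^k} < \dots < I^n = I')$ from $I$ to $I'$, I will consider the moduli space
\[
\mathcal{N}(p) \;=\; \coprod_{\phi}\, \mathcal{M}_{J_{n,k}}(\phi),
\]
where $\phi$ ranges over those elements of $\pi_2(\bx, \htheta_{2,3}, \dots, \htheta_{n-1,n}, \by)$ with $n_z(\phi)=0$ and $\mu(\phi)=1$. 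For generic analytic data the disjoint union $\bigsqcup_p \mathcal{N}(p)$, taken over all marked paths from $I$ to $I'$, is a smooth compact $1$-manifold with boundary, so the total number of its boundary points is $0\pmod 2$. The entire proof then reduces to identifying these ends.

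Gromov compactness produces two types of ends. Strip-breakings at the corner $\bx$ are governed by $J_2^{(0)}$ (by the compatibility condition imposed on $s_1$) and yield, after summing over $p$, exactly the coefficient of $\by$ in $G(D\bx)$; strip-breakings at $\by$ are governed by $J_2^{(1)}$ and yield the coefficient of $\by$ in $D'(G\bx)$. Conformal degenerations along a chord $C_{i,j}$ lying in the ``old-data'' portion of $\mathbf{P}_n$ (compatibility case~(1): $i<k$ and $j\leq k$) split each end as a $J^{(0)}$-counted polygon composed with a shorter $G$-type polygon, and together contribute the remaining longer-path terms of $G\circ D$. Symmetrically, chords lying in the ``new-data'' portion (compatibility case~(3): $i\geq k$) contribute the longer-path terms of $D'\circ G$.

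The main obstacle will be handling the remaining ends: strip-breakings at intermediate corners $\htheta_{i,i+1}$, together with conformal degenerations along transition-crossing chords (compatibility case~(2): $i<k<j$, which produce a pair of shorter $G$-type polygons rather than a $G$-type/Heegaard-Floer pair). These do not individually match any term of $G\circ D + D'\circ G$. My plan is to pair them off mod~2 by the same bookkeeping used in Proposition~\ref{prop:bound}: each such end in a marked path $p$ will match uniquely with an end of a neighboring marked path, obtained by inserting or contracting a single intermediate vector around position $i$, and the three-case compatibility conditions on $\{J_{n,k}\}$ were arranged precisely so that the pseudo-holomorphic contributions from the two sides of the pair agree on the nose. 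Once this pairing is verified, the total end count of $\bigsqcup_p \mathcal{N}(p)$ vanishes modulo~2 and the only surviving contributions are those identified with $G\circ D$ and $D'\circ G$, proving $G\circ D + D'\circ G = 0$.
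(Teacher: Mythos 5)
Your overall framework is the right one and matches the paper's: one computes the coefficient of $\by$ in $(G\circ D + D'\circ G)(\bx)$ by identifying the boundary points of the $1$--dimensional moduli spaces $\Mh_{J_{n,k}}(\phi)$, summed over all marked paths $p$ and all $\phi\in\pi_2(\bx,\by;p)$ with $\mu(\phi)=1$, and then uses that the total end count vanishes mod $2$. However, your identification of the ends with the algebraic terms is wrong in a way that would derail the argument. The attribution of an end to ``a term of $G\circ D$,'' ``a term of $D'\circ G$,'' or ``must cancel'' is not governed by which of the three compatibility cases the chord $C_{i,j}$ falls into; it is governed by whether $i=1$, i.e.\ whether the chord has one end on the edge $e_1$ labeled by $\alpha$. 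When $i=1$ the degeneration splits off a polygon \emph{containing the $\alpha$ edge}: if $j\le k$ (compatibility case (1)) this is a $J^{(0)}$-polygon from $\bx$ through the $\htheta$'s to some $\bz\in\T_\alpha\cap\T_{\Lambda(I^j)}$, i.e.\ a $D$-type term, giving the $X^1$ contributions to $G\circ D$; if $j>k$ (compatibility case (2)) it is a $G$-type polygon followed by a $J^{(1)}$-polygon, giving the $X^2$ contributions to $D'\circ G$. When $i\ge 2$, the split-off $(j-i+1)$-gon has edges $\Lambda(I^i),\dots,\Lambda(I^j)$ and \emph{no $\alpha$ edge}, so it is not a term of either $D$ or $D'$; these are the ends $X^3$, $X^4$, $X^5$ that must cancel. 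Your proposal instead claims that all of case (1) contributes to $G\circ D$, that all of case (3) contributes to $D'\circ G$, and that only case (2) plus intermediate strip breakings need to cancel. This is false on all three counts: case (1) with $i\ge 2$ gives $X^3$ (not a $G\circ D$ term), case (3) always has $i\ge k\ge 2$ and hence \emph{never} produces a $D'\circ G$ term (it gives $X^5$), and case (2) with $i=1$ gives the longer-path $D'\circ G$ terms (it must \emph{not} be cancelled).

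Consequently your ``remaining ends'' list is both over- and under-inclusive, and the cancellation step cannot close as stated. There is also a secondary imprecision: the mod-$2$ vanishing of the $i\ge 2$ ends is not a single pairing. As in the paper's proof of Proposition \ref{prop:bound}, it decomposes into three sub-cases by $j-i$: (a) $j-i=1$ vanishes because $\htheta_{i,i+1}$ is a cycle, so $\partial_{J^{(0)}_2}\htheta_{i,i+1}=0$ --- no cross-path pairing is involved; (b) $j-i>2$ contributes nothing because the relevant $\pi_2$ with $n_z=\mu=0$ is empty (as in \cite[Lemma 4.5]{osz12}); (c) $j-i=2$ contributes via the unique holomorphic triangle onto $\htheta_{i,i+2}$, whose count is $1\pmod 2$ independently of the chosen $J_3$, and these terms are paired between the two marked paths $p$ and $p'$ that differ only in the choice of $I^{i+1}$. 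Your ``inserting or contracting a single intermediate vector'' and your claim that the matching is forced by the three-case compatibility conditions do not correctly describe this mechanism; the compatibility conditions control how the $J_{n,k}$ degenerate, but the actual mod-$2$ cancellation comes from the three facts (a)--(c) above.
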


\begin{proof}[Proof of Proposition \ref{prop:chain}]
Fix intersection points $\bx \in \T_{\alpha}\cap\T_{\eta(\wt I)}$ and $\by \in \T_{\alpha}\cap\T_{\lambda(\wt{I'})}$ with $I\leq I'$ and $I_{m+1} = 0$ and $I'_{m+1} = 1$. Our goal is to show that the coefficient of $\by$ in $G\circ D(\bx)+D'\circ G(\bx)$ is zero mod 2. Let $S$ denote the set of marked paths from $I$ to $I'$, and fix some $p\in S$ given by $I=I^2<\dots<\wh{I^k}<\dots<I^n=I'$. Let $\pi_2(\bx,\by;p)$ denote the set of $\phi\in\pi_2(\bx,\htheta_{2,3},\dots,\htheta_{n-1,n},\by)$ with $n_z(\phi)=0$ and $\mu(\phi)=1$, and consider the 1-dimensional moduli spaces $\Mh_{J_{n,k}}(\phi)$ for $\phi\in\pi_2(\bx,\by;p)$. The ends of these moduli spaces consist of maps of degenerate or ``broken" $n$-gons. In particular, the compatibility conditions on the $J_{n,k}$ ensure that the ends of $\Mh_{J_{n,k}}(\phi)$ can be identified with the disjoint union $$X^1_{p,\phi}\,\,\coprod\, \,X^2_{p,\phi}\,\,\coprod \,\,X^3_{p,\phi}\,\,\coprod\,\, X^4_{p,\phi}\,\,\coprod \,\,X^5_{p,\phi},$$ where 

\begin{eqnarray*}
X^1_{p,\phi}&=&\coprod_{2\leq j\leq k}\,\,\,\coprod_{\bx\in\T_{\alpha}\cap\T_{\Lambda(I^j)}}\Mh_{J^{(0)}_j}(\phi_{\alpha,I^2,\dots,I^j;\bz})\times\Mh_{J_{n-j+2,k-j+2}}(\psi_{\alpha,I^j,\dots,I^n;\bz})\\
X^2_{p,\phi}&=&\coprod_{k<j\leq n}\,\,\,\coprod_{\bz\in\T_{\alpha}\cap\T_{\Lambda(I^j)}}\Mh_{J_{j,k}}(\phi_{\alpha,I^2,\dots,I^j;\bz})\times\Mh_{J^{(1)}_{n-j+2}}(\psi_{\alpha,I^j,\dots,I^n;\bz})\\
X^3_{p,\phi}&=&\coprod_{2\leq i<j \leq k}\,\,\,\coprod_{\bz\in\T_{\Lambda(I^i)}\cap\T_{\Lambda(I^j)}}\Mh_{J^{(0)}_{j-i+1}}(\phi_{I^i,\dots,I^j;\bz})\times\Mh_{J_{n-j+1+1,k-j+i+1}}(\psi_{\alpha,I^2,\dots,I^i,I^j,\dots,I^n;\bz})\\
X^4_{p,\phi}&=&\coprod_{2\leq i<k<j\leq n}\,\,\,\coprod_{\bz\in\T_{\Lambda(I^i)}\cap\T_{\Lambda(I^j)}}\Mh_{J_{j-i+1,k-i+1}}(\phi_{I^i,\dots,I^j;\bz})\times\Mh_{J_{n-j+i+1,i}}(\psi_{\alpha,I^2,\dots,I^i,I^j,\dots,I^n;\bz})\\
X^5_{p,\phi}&=&\coprod_{k\leq i<j\leq n}\,\,\,\coprod_{\bz\in\T_{\Lambda(I^i)}\cap\T_{\Lambda(I^j)}}\Mh_{J^{(1)}_{j-i+1}}(\phi_{I^i,\dots,I^j;\bz})\times\Mh_{J_{n-j+i+1,k}}(\psi_{\alpha,I^2,\dots,I^i,I^j,\dots,I^n;\bz}).
\end{eqnarray*}

In $X^1_{p,\phi}$ and $X^2_{p,\phi}$, the pairs $\phi_{\alpha, I^2,\dots,I^j;\bz}$ and $\psi_{\alpha,I^j,\dots,I^n;\bz}$ range over homotopy classes with $n_z = \mu = 0$ in $\pi_2(\bx,\htheta_{2,3},\dots,\htheta_{j-1,j},\bz)$ and $\pi_2(\bz,\htheta_{j,j+1},\dots,\htheta_{n-1,n},\by)$, respectively, such that $\phi$ is homotopic to the concatenation $\phi_{\alpha, I^2,\dots,I^j;\bz}* \psi_{\alpha, I^j,\dots,I^n;\bz}$. Similarly, the pairs $\phi_{I^i,\dots,I^j;\bz}$ and $\psi_{\alpha,I^2,\dots,I^i,I^j,\dots,I^n;\bz}$ in the unions $X^3_{p,\phi}$, $X^4_{p,\phi}$ and $X^5_{p,\phi}$ range over classes with $n_z = \mu = 0$ in $\pi_2(\htheta_{i,i+1},\dots,\htheta_{j-1,j},\bz)$ and $\pi_2(\bx,\htheta_{2,3},\dots,\htheta_{i-1,i},\bz,\htheta_{j,j+1}\dots,\htheta_{n-1,n})$, such that $\phi$ is homotopic to $\phi_{I^i,\dots,I^j;\bz}*\psi_{\alpha,I^2,\dots,I^i,I^j,\dots,I^n;\bz}$.

Observe that the number of points in the union $$\coprod_{\phi\in\pi_2(\bx,\by;p)} X^1_{p,\phi}$$ is precisely the coefficient of $\by$ in the contribution to $G\circ D(\bx)$ coming from the compositions $$G_{I^j<\dots<\wh{I^k}<\dots<I^n}\circ D_{\wt{I^2}<\dots<\wt{I^j}}$$ for $2\leq j\leq k$. Likewise, the number of points in the union $$\coprod_{\phi\in\pi_2(\bx,\by;p)} X^2_{p,\phi}$$ is the coefficient of $\by$ in the contribution to $D'\circ G(\bx)$ coming from the compositions $$D'_{\wt{I^j}<\dots<\wt {I^n}}\circ G_{I^2<\dots<\wh{I^k}<\dots<I^j}$$ for $k<j\leq n$. Therefore, the coefficient of $\by$ in $G\circ D(\bx)+D'\circ G(\bx)$ is the number of points in 
\begin{equation}
\label{eqn:union2}
\coprod_{p\in S}\,\,\coprod_{\phi\in\pi_2(\bx,\by;p)}(X^1_{p,\phi}\,\coprod \,X^2_{p,\phi}).
\end{equation} 
We would like to show that this number is zero mod 2. It is enough to show that the number of points in \begin{equation}
\label{eqn:union3}
\coprod_{p\in S}\,\,\coprod_{\phi\in\pi_2(\bx,\by;p)}(X^3_{p,\phi}\,\coprod \,X^4_{p,\phi}\,\coprod \,X^5_{p,\phi})\end{equation} 
is even, since the points in (\ref{eqn:union3}) together with those in (\ref{eqn:union2}) correspond precisely to the ends of the union of 1-dimensional moduli spaces $$\coprod_{p\in S}\,\,\coprod_{\phi\in\pi_2(\bx,\by;p)}\Mh_{J_{n,k}}(\phi),$$ and these ends are even in number. 

Consider the union $X^3_{p,\phi}.$ For $j-i=1$, the number of points corresponding to the term $\Mh_{J^{(0)}_{2}}(\phi_{I^i,I^{i+1};\bz})$ is even since these are the terms which appear as coefficients in the boundary $\partial_{J^{(0)}_{2}}(\htheta_{i,i+1})$, and $\htheta_{i,i+1}$ is a cycle. For $j-i>2 $, the set of classes in $\pi_2(\htheta_{i,i+1},\dots,\htheta_{j-1,j},\bz)$ with $n_z=\mu=0$ is empty by the same argument as is used in \cite[Lemma 4.5]{osz12}. Therefore, the only terms which contribute mod 2 to the number of points in $X^3_{p,\phi}$ are those in which $j-i=2$. 

For $j-i=2$, the set of classes in $\pi_2(\htheta_{i,i+1},\htheta_{i+1,i+2},\bz)$ with $n_z=\mu=0$ contains a single element as long as $\bz = \htheta_{i,i+2}$, and is empty otherwise. Let us denote this element by $\phi_{I^i,I^{i+1}, I^{i+2};\htheta_{i,i+2}}$. When $J^{(0)}_{3}$ is sufficiently close to the constant almost-complex structure $Sym^g(\mathfrak{j})$, there is exactly one pseudo-holomorphic representative of $\phi_{I^i,I^{i+1}, I^{i+2};\htheta_{i,i+2}}$; again, see \cite[Lemma 4.5]{osz12}. Since the map induced on homology by $f^{J^{(0)}_{3}}_{\Lambda(I^i),\Lambda(I^{i+1}),\Lambda(I^{i+2})}$ is independent of the the analytic data which goes into its construction \cite[Lemma 10.19]{lip}, the number of points in $\Mh_{J^{(0)}_{3}}(\phi_{I^i,I^{i+1}, I^{i+2};\htheta_{i,i+2}})$ is 1 mod 2 for any generic $J^{(0)}_{3}$. So, mod 2, the number of points in $X^3_{p,\phi}$ is the same as the number of points in 
\begin{equation}
\label{eqn:mod} 
\coprod_{2\leq i< k}\Mh_{J_{n-1,k-1}}(\psi_{\alpha,I^2,\dots,I^i,I^{i+2},\dots,I^n;\htheta_{i,i+2}}).
\end{equation} 
Note that each term in the disjoint union above also appears as a term in the mod 2 count of the number of points in $X^3_{p',\phi}$ for exactly one other $p'$ since, for each $2\leq i< k$, there is exactly one other marked path $p'\in S$ given by $$I=I^2<\dots<I^i<I''<I^{i+2}<\dots<\wh{I^k}<\dots<I^n = I'.$$ Therefore, the number of points in $$\coprod_{p\in S}\,\,\coprod_{\phi\in\pi_2(\bx,\by;p)}X^3_{p,\phi}$$ is even. Similar statements hold for $X^4_{p,\phi}$ and $X^5_{p,\phi}$ by virtually identical arguments. Hence, the number of points in (\ref{eqn:union3}) is even, and we are done.

\end{proof}

\begin{proposition}
\label{prop:e1}
The map $G$ induces an isomorphism from $E^1(X)$ to $E^1(X')$.
\end{proposition}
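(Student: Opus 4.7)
The plan is to identify the $E^1$ pages of $X$ and $X'$ explicitly, compute the filtration-preserving part of $G$, and recognize the induced map $G^1$ as a direct sum of the standard triangle maps that implement invariance of $\hf$ under Hamiltonian isotopy.

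First I would compute $E^1$. Since the $h'$-grading on $X$ depends only on the first $m$ coordinates, the filtration-preserving component of $D$ is $D^0 = \sum_I D_{I,I}$, where each $D_{I,I} = \partial_{J_2^{(0)}}$ is an ordinary Heegaard Floer differential. Consequently
\[ E^1(X) \cong \bigoplus_{I \in \{0,1\}^m} \hf(Y_{\alpha,\eta(I)}), \]
and the analogous identification $E^1(X') \cong \bigoplus_I \hf(Y_{\alpha,\lambda(I)})$ holds for $X'$.

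Next I would pin down the $h'$-preserving components of $G$. A piece $G_{I,I'}$ with $I=(\tilde I,0)$ and $I'=(\tilde I',1)$ shifts the $h'$-grading by $|\tilde I'| - |\tilde I|$, so since $I \leq I'$ forces $\tilde I \leq \tilde I'$ coordinatewise, preservation of $h'$ forces $\tilde I = \tilde I'$. For each such $J := \tilde I$, the set of marked paths from $(J,0)$ to $(J,1)$ consists of a single length-two path, and the associated contribution is the triangle map
\[ \bx \longmapsto f^{J_{3,2}}_{\alpha,\eta(J),\lambda(J)}\bigl(\bx \otimes \htheta_{\eta(J),\lambda(J)}\bigr). \]
Thus $G^1 = \bigoplus_J T_J$, where $T_J \colon \hf(Y_{\alpha,\eta(J)}) \to \hf(Y_{\alpha,\lambda(J)})$ is the homology-level map induced by this triangle count.

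Finally, I would argue that each $T_J$ is an isomorphism, which immediately gives the proposition. The map $T_J$ is the standard ``nearest neighbor'' triangle map associated to the small Hamiltonian isotopy from $\eta(J)$ to $\lambda(J)$; although it is computed here using the interpolating data $J_{3,2}$ rather than an extension of a single $J_2$, the induced map on $\hf$ is independent of the particular generic analytic data used (cf.\ \cite[Lemma 10.19]{lip}), and with any such choice it agrees with the canonical isomorphism $\hf(Y_{\alpha,\eta(J)}) \cong \hf(Y_{\alpha,\lambda(J)})$ from the invariance proof in \cite{osz8}. Summing over $J$ shows $G^1$ is an isomorphism.

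The main technical obstacle is the analytic-data-independence step: one must verify that the homology map induced by our triangle count does not depend on the specific $J_{3,2}$ satisfying the compatibility conditions from the previous proposition. This is a standard argument using a parameterized moduli space that interpolates between any two such choices; the interpolation produces a chain homotopy, so the induced maps on $\hf$ coincide.
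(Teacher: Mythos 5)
Your proposal is correct and follows essentially the same route as the paper: you identify the $h'$-preserving part of $G$ as a direct sum of nearest-neighbor triangle maps $f^{J_{3,2}}_{\alpha,\eta(K),\lambda(K)}(-\otimes\htheta_K)$, then invoke independence of the analytic data (the paper cites \cite[Lemma 10.19]{lip}) together with the area-filtration argument of \cite[Section 9]{osz8} to conclude that each summand, and hence $G^1$, is an isomorphism.
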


\begin{proof}[Proof of Proposition \ref{prop:e1}]
The map from $E^1(X)$ to $E^1(X')$ induced by $G$ is the portion of the adjusted map $G^{(1)}$ which preserves the grading $h'$. It is therefore the sum of the maps $$(G_{I,I'})_*:\hf(Y_{\alpha,\eta(\wt I)})\rightarrow \hf(Y_{\alpha, \lambda(\wt{I'})}),$$ over all $I<I'\in \{0,1\}^{m+1}$ for which there is a marked path of the form $I=\wh I^2<I^3=I'$. Equivalently, the map induced by $G$ is the sum, over $K\in \{0,1\}^m$, of the maps \begin{equation}\label{eqn:map}F^{J_{3,2}}_{\alpha,\eta(K),\lambda(K)}(-\otimes\htheta_K): \hf(Y_{\alpha,\eta(K)})\rightarrow \hf(Y_{\alpha, \lambda(K)}),\end{equation} where $\htheta_K$ is the unique intersection point in $\T_{\eta(K)}\cap\T_{\lambda(K)}$ with highest Maslov grading, and $F^{J_{3,2}}_{\alpha,\eta(K),\lambda(K)}(-\otimes\htheta_K)$ is the map on homology induced by $f^{J_{3,2}}_{\alpha,\eta(K),\lambda(K)}(-\otimes\htheta_K)$. If $J_{3,2}$ is sufficiently close to the constant almost-complex structure $Sym^g(\mathfrak{j})$, then the maps in (\ref{eqn:map}) are isomorphisms by an area filtration argument \cite[Section 9]{osz8}. Therefore, these maps are isomorphisms for any generic $J_{3,2}$ (again, by \cite[Lemma 10.19]{lip}).

\end{proof}

It follows from Lemma \ref{lem:filt} that $(X,D)$ is filtered chain homotopy equivalent to $(X',D')$. In addition, the complex $(X',D')$ is filtered chain homotopy equivalent to the complex associated to the multi-diagram $H$ and the analytic data $\mathfrak{j}$ and $\{J^{(1)}_n\}_{n\geq 2}$, by Theorem \ref{thm:multi}. In other words, we have shown that for a multi-diagram $H$ and a fixed $\mathfrak{j}$, the filtered chain homotopy type of the complex associated to $H$ does not depend on the choice of $\{J_n\}_{n\geq 2}$. It follows that the filtered chain homotopy type of this complex is also independent of $\mathfrak{j}$, as in \cite[Theorem 6.1]{osz8}. 
This completes the proof of Theorem \ref{thm:analytic}.

\qed

\section{The spectral sequence from $\kh$ to $\hf$}
\label{sec:review}
In this section, we describe how a specialization of the link surgeries spectral sequence gives rise to a spectral sequence from the reduced Khovanov homology of a link to the Heegaard Floer homology of its branched double cover, following \cite{osz12}.

Let $L$ be a planar diagram for an oriented link in $S^3$, and label the crossings of $L$ from $1$ to $m$. For $I = (I_1,\dots,I_m) \in \{\infty,0,1\}^m$, let $L_I$ be the planar diagram obtained from $L$ by taking the $I_j$-resolution of the $j$th crossing for each $j \in \{1,\dots, m\}$.

 \begin{figure}[!htbp]
 \labellist 
\small\hair 2pt 
\pinlabel $\infty$ at 29 2 
\pinlabel $0$ at 101 2
\pinlabel $1$ at 178 2 
\endlabellist 
\begin{center}
\includegraphics[width=7cm]{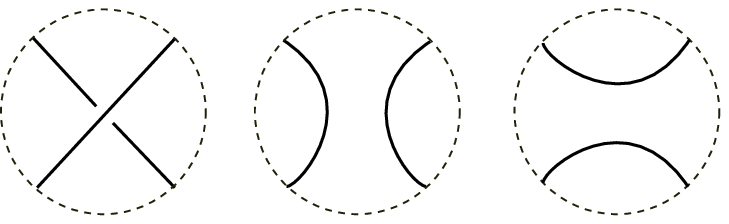}
\caption{\quad The $\infty$-, $0$- and $1$- resolutions of a crossing.}
\label{fig:Res}
\end{center}
\end{figure}

Let $a_j$ denote the dashed arc in the local picture near the $j$th crossing of $L$ shown in Figure \ref{fig:arc}. The arc $a_j$ lifts to a closed curve $\alpha_j$ in the branched double cover $-\Sigma(L)$. For $I \in \{\infty,0,1\}^m$, $-\Sigma(L_I)$ is obtained from $-\Sigma(L)$ by performing $I_j$-surgery on $\alpha_j$ with respect to some fixed framing, for each $j\in \{1,\dots,m\}$. 

 \begin{figure}[!htbp]
 \labellist 
\small\hair 2pt 
\pinlabel $j$ at 37 30 
\pinlabel $a_j$ at 14 29 
\endlabellist 
\begin{center}
\includegraphics[width=2.5cm]{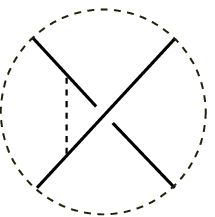}
\caption{}
\label{fig:arc}
\end{center}
\end{figure}

Let $(X,D)$ be the complex associated to some multi-diagram compatible with the framed link $$\mathbb{L}_L = \alpha_1\cup\dots\cup\alpha_m \subset -\Sigma(L)$$ and some choice of analytic data. Recall that $$X=\bigoplus_{I\in \{0,1\}^m}\cf(-\Sigma(L_I)), $$ and $D$ is the sum of maps $$D_{I,I'}:\cf(-\Sigma(L_I))\rightarrow \cf(-\Sigma(L_{I'})),$$ over all pairs $I \leq I'$ in $\{0,1\}^m$. In this context, Theorem \ref{thm:conv} says the following.

\begin{theorem}
\label{thm:homology}
The homology $H_*(X,D)$ is isomorphic to $\hf(-\Sigma(L))$.
\end{theorem}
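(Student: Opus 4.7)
The plan is to deduce this as a direct specialization of Theorem~\ref{thm:conv}, applied to the framed link $\mathbb{L}_L = \alpha_1 \cup \dots \cup \alpha_m \subset -\Sigma(L)$ with the framing specified in the paragraph preceding the theorem. Setting $Y = -\Sigma(L)$ in the statement of Theorem~\ref{thm:conv}, the conclusion reads $H_*(X,D) \cong \hf(-\Sigma(L))$, which is exactly what we want; so the entire content of the proof is to verify that the complex $(X,D)$ defined in this section coincides with the link surgeries complex built from $\mathbb{L}_L$ in Section~\ref{sec:lsss}.

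First, I would recall the key topological input, which is the standard fact that for any $I \in \{\infty,0,1\}^m$ the branched double cover satisfies
\[
(-\Sigma(L))_{\alpha,\eta(I)} \;\cong\; -\Sigma(L_I).
\]
The case $I = (\infty,\dots,\infty)$ is trivial since $\infty$-surgery does not change the ambient manifold and $L_{(\infty,\dots,\infty)} = L$. For the remaining cases it suffices, by induction on the number of non-$\infty$ entries, to check this locally at a single crossing: the arc $a_j$ lifts to a closed curve $\alpha_j \subset -\Sigma(L)$ whose regular neighborhood projects to a disk containing the $j$th crossing, and under this branched cover the $0$- and $1$-surgeries on $\alpha_j$ pull back to the two resolutions of the crossing in the downstairs diagram. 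This is the content of \cite{osz12} in this setting and is well-known.

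Next, I would observe that the complex $(X,D)$ defined above uses exactly the sets of attaching curves $\eta(I)$ for $I \in \{0,1\}^m$ from Section~\ref{sec:lsss}, with associated 3-manifolds $-\Sigma(L_I)$; hence term-by-term,
\[
X = \bigoplus_{I \in \{0,1\}^m} \cf(-\Sigma(L_I))
\]
agrees with the hypercube vector space $\bigoplus_I \cf(Y_{\alpha,\eta(I)})$ of Section~\ref{sec:lsss}, and the boundary map $D$ is defined by the same polygon counts. By Theorem~\ref{thm:analytic} and Theorem~\ref{thm:multi} the filtered chain homotopy type of $(X,D)$ is independent of the choices of compatible multi-diagram and analytic data, so we may freely compare with any such model.

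With the identifications above in hand, Theorem~\ref{thm:conv} applied to $\mathbb{L}_L \subset -\Sigma(L)$ yields $H_*(X,D) \cong \hf(-\Sigma(L))$, completing the argument. The only non-routine step is the topological identification of $(-\Sigma(L))_{\alpha,\eta(I)}$ with $-\Sigma(L_I)$; everything else is simply matching definitions and invoking the already-established Theorem~\ref{thm:conv}.
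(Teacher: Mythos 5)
Your proof is correct and takes essentially the same route as the paper, which simply states that Theorem~\ref{thm:homology} is the specialization of Theorem~\ref{thm:conv} to the framed link $\mathbb{L}_L \subset -\Sigma(L)$, having already recorded in the preceding paragraph that $I_j$-surgery on $\alpha_j$ produces $-\Sigma(L_I)$. Your extra elaboration of that topological identification, and your (unnecessary but harmless) appeal to Theorems~\ref{thm:multi} and~\ref{thm:analytic}, just spells out what the paper leaves implicit.
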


There is a grading on $X$ defined, for $x \in \cf(-\Sigma(L_I))$, by $h(x) = I_1+\cdots + I_n-n_-(L)$, where $n_-(L)$ is the number of negative crossings in $L$. Note that $h$ is just a shift of the grading $h'$ defined in Section \ref{sec:lsss}. As such, this $h$-grading gives rise to an ``$h$-filtration" of the complex $(X,D),$ which, in turn, gives rise to the link surgery spectral sequence associated to $\bL_L$. 

Let $E^k(L)$ denote the $E^k$ term of this spectral sequence for $k\geq1$. Though the complex $(X,D)$ depends on a choice of multi-diagram compatible with $\bL_L$ and a choice of analytic data, the $h$-graded vector space $E^k(L)$ depends only on the diagram $L$, by Theorems \ref{thm:multi} and \ref{thm:analytic}. In light of this fact, we will often use the phrase ``the complex associated to a planar diagram $L$" to refer to the complex associated to any multi-diagram compatible with $\mathbb{L}_L$ and any choice of analytic data. The primary goal of this article is to show that, in fact, $E^k(L)$ depends only on the topological link type of $L$.

Recall that the portion $D^0$ of $D$ which does not shift the $h$-grading is the sum of the standard Heegaard Floer boundary maps $$D_{I,I}:\cf(-\Sigma(L_I)) \rightarrow \cf(-\Sigma(L_I)).$$ Therefore, $$E^1(L)\cong \bigoplus_{I\in \{0,1\}^m}\hf(-\Sigma(L_I)). $$ If $I'$ is an immediate successor of $I$, then $-\Sigma(L_{I'})$ is obtained from $-\Sigma(L_{I})$ by performing $(-1)$-surgery on a meridian of $\alpha_k$, and $$(D_{I,I'})_*:\hf(-\Sigma(L_{I}))\rightarrow \hf(-\Sigma(L_{I'}))$$ is the map induced by the corresponding 2-handle cobordism. By construction, the differential $D^1$ on $E^1(L)$ is the sum of the maps $(D_{I,I'})_*$, over all pairs $I,I'$ for which $I'$ is an immediate successor of $I$. 

\begin{theorem}[{\rm \cite[Theorem 6.3]{osz12}}]
\label{thm:kh}
The complex $(E^1(L),D^1)$ is isomorphic to the complex $(\ckh(L),d)$ for the reduced Khovanov homology of  $L$. In particular, $E^2(L) \cong \kh(L)$.
\end{theorem}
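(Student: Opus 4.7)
The plan is to identify the $E^1$ chain complex, vertex by vertex and edge by edge, with the cube of resolutions that computes reduced Khovanov homology; the isomorphism $E^2(L)\cong\kh(L)$ then follows by taking homology.

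\textbf{Vertices.} For each $I\in\{0,1\}^m$, the complete resolution $L_I$ is a planar unlink with $|L_I|$ components, so its branched double cover satisfies $-\Sigma(L_I)\cong\#^{|L_I|-1}(S^1\times S^2)$. By the standard Heegaard Floer computation for connect sums of $S^1\times S^2$, there is an isomorphism $\hf(-\Sigma(L_I))\cong V^{\otimes(|L_I|-1)}$, where $V\cong H_*(S^1;\zt)$ is a $2$-dimensional graded $\zt$-vector space with generators that will play the roles of $\veep$ and $\veem$. The basepoint on $L$ (already required to define $\ckh$) marks a distinguished component of each $L_I$; under the above identification, the marked circle corresponds to the ``absent'' tensor factor, so we recover exactly the vertex $V_I$ of the reduced Khovanov cube.

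\textbf{Edges.} For each immediate successor $I<I'$ with $I_j=0,\,I'_j=1$, passing from the $0$- to the $1$-resolution at the $j$th crossing either merges two components of $L_I$ or splits one component. The associated $2$-handle cobordism $W_{I,I'}$ from $-\Sigma(L_I)$ to $-\Sigma(L_{I'})$ attaches a $(-1)$-framed $2$-handle to a meridian of $\alpha_j$, and topologically is (a connect sum of the identity with) the elementary pair-of-pants cobordism between $\#^k(S^1\times S^2)$ and $\#^{k\pm 1}(S^1\times S^2)$. A direct computation of the Heegaard Floer $2$-handle map for this elementary cobordism identifies $(D_{I,I'})_*$ with multiplication $V\otimes V\to V$ in the merge case, and with comultiplication $V\to V\otimes V$ in the split case --- that is, with the standard Khovanov edge maps built from the Frobenius algebra structure on $V$. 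Once the vertex identifications are made consistently across the cube, the differential $D^1=\sum_{I\lessdot I'}(D_{I,I'})_*$ becomes exactly the Khovanov differential $d$.

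\textbf{Grading.} The grading $h(x)=I_1+\cdots+I_m-n_-(L)$ on $X$ agrees with the homological grading on the Khovanov cube (the shift by $-n_-(L)$ is the conventional one so that the resulting homology is a link invariant rather than a diagram invariant). This gives the promised isomorphism $(E^1(L),D^1)\cong(\ckh(L),d)$.

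\textbf{Main obstacle.} The genuine work is the edge computation. One must (i) fix canonical isomorphisms $\hf(\#^k(S^1\times S^2))\cong V^{\otimes k}$ that are coherent across different vertices of the resolution cube, (ii) verify that the prescribed meridional $2$-handle in the branched double cover does realize the elementary cobordism claimed above (this is where the framing conventions of Section \ref{sec:review} enter), and (iii) compute the Heegaard Floer $2$-handle map in the specific Heegaard-diagram model used by the multi-diagram, and match the answer with the Frobenius multiplication and comultiplication on $V$. All of these are standard but delicate, and the consistency of the identifications across all vertices is what makes the chain-level identification, rather than merely a vertex-wise vector space isomorphism, nontrivial.
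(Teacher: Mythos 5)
The paper does not prove this theorem itself; it cites it as \cite[Theorem~6.3]{osz12}, and your sketch is a faithful outline of the argument given there: the vertex identification $\hf(-\Sigma(L_I))\cong V^{\otimes(|L_I|-1)}$ via the Heegaard Floer computation for $\#^{|L_I|-1}(S^1\times S^2)$, the identification of the edge maps $(D_{I,I'})_*$ with the Frobenius multiplication/comultiplication coming from the elementary $2$-handle cobordisms, and the $-n_-(L)$ shift matching the reduced Khovanov homological grading. What you have labeled the ``main obstacle'' --- fixing coherent identifications across the cube and explicitly computing the $2$-handle maps in the multi-diagram model --- is precisely the technical content of \cite[Sections 5--6]{osz12}; you have correctly flagged it but not carried it out, so as written this is an outline rather than a complete proof. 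As an outline, it matches the cited source and is correct.
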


We refer to $h$ as the ``homological grading" on $X$ since the induced $h$-grading on $E^1(L)$ agrees with the homological grading on $\ckh(L)$ via the isomorphism above. 

Recall that when $s$ is a torsion $Spin^c$ structure, the chain complex $\cf(-\Sigma(L_I),s)$ is equipped with an absolute Maslov grading $m$ which takes values in $\mathbb{Q}$ \cite{osz6}. We can use this Maslov grading to define a grading $q$ on a portion of $X$, as follows. Suppose that $s$ is torsion, and let $x$ be an element of  $\cf(-\Sigma(L_I),s)$ which is homogeneous with respect to $m$. We define $q(x) = 2m(x) + h(x) + n_+(L) - n_-(L)$, where $n_+(L)$ is the number of positive crossings in $L$. Since each $-\Sigma(L_I)$ is a connected sum of $S^1\times S^2$s, and the Heegaard Floer homology of such a connected sum is supported in a unique torsion $Spin^c$ structure, there is an induced $q$-grading on all of $E^1(L)$. Moreover, this induced grading agrees with the quantum grading on $\ckh(L)$. As such, we shall refer to $q$ as the partial ``quantum grading" on $X$. 

It is not clear that there is always a well-defined quantum grading on $E^k(L)$ for $k>2$, though this is sometimes the case. For example, suppose that $(X,D)$ is the complex associated to some multi-diagram compatible with $\mathbb{L}_L$ and some choice of analytic data. If the partial quantum grading on $X$ gives rise to a well-defined quantum grading on the higher terms of the induced spectral sequence, then there is a well-defined quantum grading on the higher terms of the spectral sequence induced by any other compatible multi-diagram and choice of analytic data. Moreover, these quantum gradings agree. 
This follows from the proofs of independence of multi-diagram and analytic data: the isomorphisms between $E^1$ terms preserve quantum grading; therefore, so do the induced isomorphisms between higher terms. Furthermore, it follows from Section \ref{sec:reid} that this quantum grading, if it exists, does not depend on the choice of planar diagram; that is, it is a link invariant. 

As a last remark, we note that Lemma \ref{lem:connect} implies the following corollary.

\begin{corollary}
\label{cor:connect}
For links $L_1$ and $L_2$, $E^k(L_1\#L_2) \cong E^k(L_1)\otimes E^k(L_2)$.
\end{corollary}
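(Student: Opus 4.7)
The plan is to reduce the statement directly to Lemma \ref{lem:connect} by choosing a compatible multi-diagram for $L_1 \# L_2$ that is itself a connected sum of multi-diagrams for $L_1$ and $L_2$. First, I would observe that for the planar diagram of $L_1 \# L_2$ obtained by taking the obvious connected sum of planar diagrams for $L_1$ and $L_2$ (glued outside of the crossings), there is a natural diffeomorphism
\[
-\Sig(L_1 \# L_2) \;\cong\; -\Sig(L_1) \,\#\, -\Sig(L_2),
\]
and under this diffeomorphism the framed link $\bL_{L_1 \# L_2}$ of Section \ref{sec:review} corresponds to the disjoint union $\bL_{L_1} \cup \bL_{L_2}$, since the arcs $a_j$ lifting to the surgery curves $\alpha_j$ are supported in small neighborhoods of the individual crossings and these neighborhoods are disjoint from the connected sum region.

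Next, I would pick multi-diagrams $H_1$ and $H_2$ compatible with $\bL_{L_1}$ and $\bL_{L_2}$, and form their connected sum $H_\#$ near the basepoints as described in the paragraph preceding Lemma \ref{lem:connect}. The resulting $H_\#$ is then a multi-diagram compatible with the framed link $\bL_{L_1} \cup \bL_{L_2}$, which by the previous paragraph is identified with $\bL_{L_1 \# L_2}$. Lemma \ref{lem:connect} then gives an isomorphism of complexes $(X_\#, D_\#) \cong (X_1, D_1) \otimes (X_2, D_2)$ and an isomorphism $E^k(X_\#) \cong E^k(X_1) \otimes E^k(X_2)$ for all $k \geq 1$.

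I would then check that the isomorphism is compatible with the $h$-grading that defines the link surgeries spectral sequence of Section \ref{sec:review}. The crossings of $L_1 \# L_2$ split as a disjoint union of the crossings of $L_1$ and $L_2$, so for $I_1 \in \{0,1\}^{m_1}$ and $I_2 \in \{0,1\}^{m_2}$ the summand of $X_\#$ corresponding to $(I_1, I_2)$ is $\cf(-\Sig(L_{1,I_1})) \otimes \cf(-\Sig(L_{2,I_2}))$, and the shift $n_-(L_1 \# L_2) = n_-(L_1) + n_-(L_2)$ ensures
\[
h_{L_1 \# L_2}(x_1 \otimes x_2) \;=\; h_{L_1}(x_1) + h_{L_2}(x_2).
\]
Hence the $h$-filtration on $X_\#$ is precisely the tensor product of the $h$-filtrations on $X_1$ and $X_2$, which is exactly the filtration used in Lemma \ref{lem:connect} to obtain the tensor product of spectral sequences.

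Finally, by Theorems \ref{thm:multi} and \ref{thm:analytic}, the $E^k$ term of the spectral sequence of $L_1 \# L_2$ is, up to isomorphism of $h$-graded vector spaces, independent of the choice of compatible multi-diagram and analytic data, so we may compute it from $H_\#$, yielding $E^k(L_1 \# L_2) \cong E^k(L_1) \otimes E^k(L_2)$. The only real point requiring care is the identification in the first paragraph, ensuring that the planar connected sum of link diagrams induces on branched double covers precisely the connected sum of $3$-manifolds that underlies Lemma \ref{lem:connect}; everything else is bookkeeping with gradings.
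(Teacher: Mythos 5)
Your proof is correct and takes the same route as the paper: the paper simply asserts that Corollary \ref{cor:connect} follows from Lemma \ref{lem:connect}, and your argument spells out the bookkeeping — identifying $\bL_{L_1\#L_2}$ with $\bL_{L_1}\cup\bL_{L_2}$ under the branched-cover connected sum, checking additivity of the $h$-grading, and invoking Theorems \ref{thm:multi} and \ref{thm:analytic} — that the paper leaves implicit.
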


\section{Invariance under the Reidemeister moves}
\label{sec:reid}

\begin{theorem}
\label{thm:invcE}
If $L$ and $L'$ are two planar diagrams for a link, then $E^k(L)$ is isomorphic to $E^k(L')$ as an $h$-graded vector space for all $k \geq 2$. 
\end{theorem}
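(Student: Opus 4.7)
The plan is to prove the theorem by establishing invariance under each of the three Reidemeister moves separately, since any two planar diagrams for the same link are related by a finite sequence of such moves together with planar isotopy (which acts trivially on the multi-diagram). For each move, I would construct an $h$-filtered chain map between the associated complexes $(X_L, D_L)$ and $(X_{L'}, D_{L'})$ that induces an isomorphism on the $E^1$ page. By Lemma \ref{lem:filt}, such a map is automatically a filtered chain homotopy equivalence, which gives an isomorphism $E^k(L) \cong E^k(L')$ of $h$-graded vector spaces for every $k \geq 1$, and in particular for $k \geq 2$. The overall shift of $h$ by $-n_-(L)$ in the definition of the $h$-grading absorbs the changes in $n_{\pm}(L)$ under Reidemeister I and II, so the filtration indices match on the nose.

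For Reidemeister I, let $L'$ be obtained from $L$ by introducing a single new crossing, indexed $m+1$. The complex $(X_{L'}, D_{L'})$ splits into two halves indexed by the resolution at the new crossing. A local analysis near this crossing, together with Lemma \ref{lem:connect} applied to the framed-link description of the extra $S^1 \times S^2$-summand appearing in one of the two resolutions, shows that the two halves and the connecting component of $D_{L'}$ contain an explicit acyclic sub-complex, whose cancellation via Lemma \ref{lem:cancel} yields a complex filtered chain homotopy equivalent to $(X_L, D_L)$. Reidemeister II is analogous: the sub-complex indexed by the four resolutions of the two new crossings contains an explicit acyclic piece, cancellation of which via Lemma \ref{lem:cancel} reduces $(X_{L'}, D_{L'})$ to $(X_L, D_L)$ up to filtered chain homotopy.

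Reidemeister III is the main obstacle; here both complexes have the same number of summands and no purely ``local'' acyclic sub-complex is visible for direct cancellation. Instead, proceeding in the spirit of the proof of Theorem \ref{thm:analytic}, I would build a single enlarged pointed multi-diagram carrying simultaneously the attaching curves for the bouquets of $\bL_L$ and $\bL_{L'}$, together with the usual small Hamiltonian translates and highest-Maslov cycles linking them, and then define a chain map $F: (X_L, D_L) \to (X_{L'}, D_{L'})$ as a sum of polygon-counting maps indexed by paths through this enlarged cube of attaching curves, in direct analogy with the map $G$ constructed in the proof of Proposition \ref{prop:chain}. The chain-map identity $F \circ D_L = D_{L'} \circ F$ will follow from a boundary count of one-dimensional moduli spaces of pseudo-holomorphic $n$-gons, using the five-piece decomposition $X^1 \sqcup X^2 \sqcup X^3 \sqcup X^4 \sqcup X^5$ of the ends together with compatibility conditions on the almost-complex structures $J_{n,k}$, exactly as in the proof of Proposition \ref{prop:chain}. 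To see that $F$ induces an isomorphism on $E^1$, one notes that under the identification of $(E^1(L), D^1)$ with the reduced Khovanov chain complex of $L$ provided by Theorem \ref{thm:kh}, the $h$-preserving component of $F$ should realize a classical chain-level Khovanov chain homotopy equivalence for the Reidemeister III move, and one verifies this by an area-filtration argument along the lines of Proposition \ref{prop:e1}.

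The hardest aspect, as signalled by the author's acknowledgement that Emmanuel Wagner pointed out an error in an earlier version, is Reidemeister III. The delicate points are: setting up the enlarged multi-diagram and selecting the correct combinatorial bijection between the resolutions of $L$ and $L'$ participating in the move so that the $h$-preserving component of $F$ actually reduces to a classical Khovanov chain map at $E^1$; ensuring that all boundary terms in the moduli-space count cancel in pairs modulo $2$, which requires a careful combinatorial analysis of paths through the resolution cubes of both sides; and verifying that the enlarged multi-diagram can be made weakly admissible compatibly with the chosen almost-complex structures, which is routine via winding but requires care at this scale.
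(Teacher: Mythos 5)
Your outline for Reidemeister I and II is in the right spirit (decompose the new crossings, cancel using Lemma~\ref{lem:cancel}, and compare with Khovanov's argument), but it contains a pervasive confusion between $E^1$ and $E^2$ that cannot be waved away. You claim the comparison maps "induce an isomorphism on the $E^1$ page" and that the cancellations yield a complex "filtered chain homotopy equivalent to $(X_L, D_L)$", hence isomorphisms of $E^k$ for all $k \geq 1$. This is false, and for a structural reason: the $E^1$ terms of the two complexes are direct sums of $\hf$ over the cubes of resolutions of $L$ and $L'$ respectively, and these have different total ranks (a Reidemeister I move doubles the cube size; a Reidemeister II move quadruples it). The cancellations that do the actual work occur between generators at \emph{different} $h$-gradings — the maps $A_-$ and $B_+$ increase $h$ by one — and Lemma~\ref{lem:cancel}'s filtered refinement requires equal gradings. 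Cancelling these components therefore does \emph{not} preserve the filtered chain homotopy type, only the $E^k$ pages for $k \geq 2$. This is precisely why Theorem~\ref{thm:invcE} is stated for $k \geq 2$, and the paper's RI argument accordingly invokes Lemma~\ref{lem:filt} with $n = 2$ (not $n = 1$), after showing that the sequence $X^{(1)}_{*\infty} \to X^{(1)}_{*0} \to X^{(1)}_{*1}$ is acyclic via the surgery exact triangle.

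The more serious gap is Reidemeister III. You propose to build a single enlarged multi-diagram carrying the bouquets of $\bL_L$ and $\bL_{L'}$ simultaneously and define a chain map by counting pseudo-holomorphic polygons, in analogy with Proposition~\ref{prop:chain}. But the two framed links are not related in the way this construction requires: although $-\Sigma(L)$ and $-\Sigma(L')$ are diffeomorphic, the isotopy of $S^3$ realizing the Reidemeister III move does \emph{not} carry the arcs $a_j$ for $L$ to those for $L'$, so the lifted curves $\alpha_j$ and $\alpha'_j$ are genuinely different framed links with no canonical identification, and there is no natural "supercube" of attaching curves producing a filtered chain map between the two cubes of resolutions. (Contrast with Proposition~\ref{prop:chain}, where the two sets of curves are merely small Hamiltonian translates of one another.) Moreover, even granting such a map, your plan to show it induces an isomorphism "on $E^1$" fails for the same rank-counting reason as above, and Khovanov's own RIII invariance is a Gaussian-elimination (cancellation) argument, not an isomorphism of chain complexes, so the $h$-preserving component of $F$ could at best be a chain homotopy equivalence inducing an isomorphism on $E^2$. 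The paper avoids all of this by the key observation that a Reidemeister III move factors through Reidemeister II moves together with the replacement of a trivial 3-tangle by the tangle $xyxy^{-1}x^{-1}y^{-1}$; invariance under that replacement is then established by an explicit sequence of cancellations in the $64$-vertex cube, using the $\bv_\pm$-decomposition of the Khovanov summands. This reduction is the essential idea your proposal is missing.
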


It suffices to check Theorem \ref{thm:invcE} for diagrams $L$ and $L'$ which differ by a Reidemeister move; we do this in the next three subsections. 

\subsection{Reidemeister I}
\label{ssec:RI}
Let $L^+$ be the diagram obtained from $L$ by adding a positive crossing via a Reidemeister I move. Let $(X,D)$ be the complex associated to a multi-diagram $H$ compatible with $\mathbb{L}_{L^+}$. Label the crossings of $L^+$ by $1,\dots,m+1$ so that crossing $m+1$ corresponds to the positive crossing introduced by the Reidemeister I move. As in \cite{osz12}, the multi-diagram $H$ actually gives rise to a larger complex $(\ol X, \ol D)$, where $$\ol X = \bigoplus_{I\in \{0,1\}^m\times \{\infty,0,1\}} \cf(-\Sigma(L^+_I)),$$ and $\ol D$ is a sum of maps $$D_{I,I'}:\cf(-\Sigma(L^+_I)) \rightarrow \cf(-\Sigma(L^+_{I'}))$$ over pairs $I\leq I'$ in $\{0,1\}^m\times \{\infty,0,1\}$.

For $j \in \{\infty,0,1\}$, let $(X_{*j},D_{*j})$ be the complex for which $$X_{*j} = \bigoplus_{I \in \{0,1\}^m \times \{j\}} \cf(-\Sigma(L^+_I)),$$ and $D_{*j}$ is the sum of the maps $D_{I,I'}$ over all pairs $I \leq I'$ in $\{0,1\}^m \times \{j\}$. For $j<j'$ in $\{\infty, 0, 1\}$, let $$F_{j,j'}: X_{*j}\rightarrow X_{*j'}$$ be the sum of the maps $D_{I,I'}$ over pairs $I \in \{0,1\}^m\times \{j\}$, $I' \in \{0,1\}^m\times \{j'\}$ with $I< I'$. Then, $(X,D)$ is the mapping cone of $$F_{0,1}: (X_{*0},D_{*0})\rightarrow (X_{*1},D_{*1}),$$ and $$F_{\infty,0}\oplus F_{\infty,1}: (X_{*\infty}, D_{*\infty}) \rightarrow (X,D)$$ is an $h$-filtered chain map, where the $h$-grading on $X_{*\infty}$ is defined by $h(x) =(I_1 + \dots + I_m) - n_-(L)$ for $x \in \cf(-\Sigma(L^+_I))$ and  $I\in \{0,1\}^m \times \{\infty\}$. Note that the sub-diagram of $H$ used to define the complex $(X_{*\infty}, D_{*\infty})$ is compatible with the framed link $\mathbb{L}_L$. We may therefore think of $(X_{*\infty}, D_{*\infty})$ as the graded complex associated to $L$. 

 \begin{figure}[!htbp]

 \labellist 
\tiny\hair 2pt 
\pinlabel $*\infty$ at 45 16 
\pinlabel $*0$ at 222 16 
\pinlabel $*1$ at 395 16 
\pinlabel $F_{\infty,0}$ at 131 85 
\pinlabel $F_{\infty,1}$ at 213 175 
\pinlabel $F_{0,1}$ at 305 85

\endlabellist 
\begin{center}
\includegraphics[width=7.8cm]{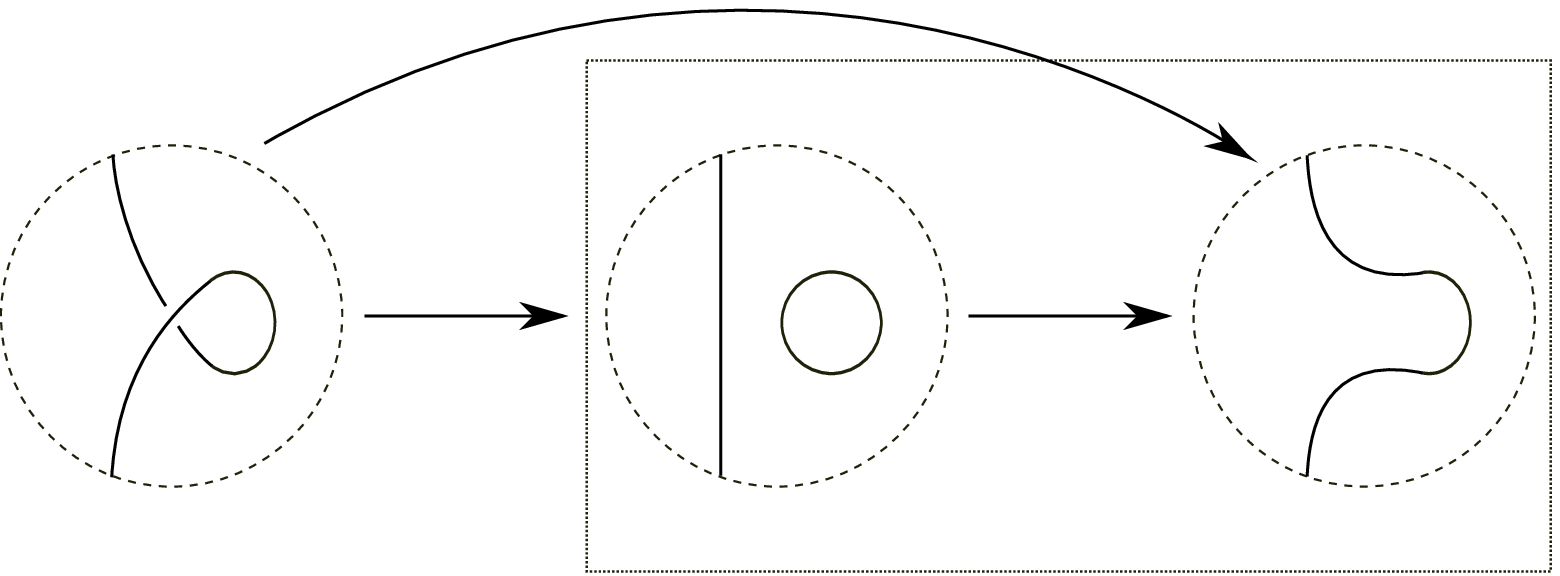}
\caption{\quad This is the complex $(\ol X, \ol D).$ The tangle labeled by $*j$ is meant to represent the complex $(X_{*j},D_{*j})$. The subcomplex surrounded by the box is precisely $(X, D)$.}
\label{fig:tcomplex}
\end{center}
\end{figure}

First, cancel the components of the differentials $D_{*j}$ which do not change the $h$-grading, and let $F_{j,j'}^{(1)}$ denote the adjusted maps. Observe that $$X_{*j}^{(1)} \cong \bigoplus_{I\in \{0,1\}^m \times \{j\}}\hf(-\Sigma(L^+_I)).$$ For $j \in \{\infty,0,1\}$, the spectral sequence differential $D_{*j}^1$ is the sum of the components of $D_{*j}^{(1)}$ which increase the $h$-grading by 1, as explained in Section \ref{sec:ss}. Let $A$ be the sum of the components of $F^{(1)}_{\infty,0}$ which do not change the $h$-grading, and let $B$ be the sum of the components of $F_{0,1}^{(1)}$ which increase the $h$-grading by 1. Note that $A$ is the map from $E^1(L)$ to $E^1(L^+)$ induced by $F_{\infty,0}\oplus F_{\infty,1}$.

For each $I\in \{0,1\}^m$, there is a surgery exact triangle \cite{osz12} \begin{displaymath}
\xymatrix{
  \hf(-\Sigma(L^+_{I\times \{\infty\}}))  \ar[r]^{A_I}&  \hf(-\Sigma(L^+_{I\times \{0\}})) \ar[d]^{B_I} \\
 &\hf(-\Sigma(L^+_{I\times \{1\}})),\ar[ul]^{C_I} \\}
\end{displaymath} where $A_I$ is the map induced by the 2-handle cobordism corresponding to $0$-surgery on the curve $\alpha_{m+1}$ (defined in Section \ref{sec:review}), viewed as an unknot in $-\Sigma(L^+_{I\times \{\infty\}})$.
The maps $C_I$ are all $0$ since $$\text{rk}\,\hf(-\Sigma(L^+_{I\times \{0\}})) = \text{rk}\,\hf(-\Sigma(L^+_{I\times \{\infty\}})) + \text{rk}\,\hf(-\Sigma(L^+_{I\times \{1\}})).$$ Moreover, $A$ and $B$ are the sums over $I\in \{0,1\}^m$ of the maps $A_I$ and $B_I$, respectively. It follows that the complex \begin{displaymath}
\xymatrix{
(X_{*\infty}^{(1)},D_{*\infty}^1)  \ar[r]^{A}& (X_{*0}^{(1)},D_{*0}^1) \ar[r]^{B}& (X_{*1}^{(1)},D_{*1}^1)   }
\end{displaymath} is acyclic. Equivalently, $A$ induces an isomorphism from $H_*(X_{*\infty}^{(1)},D_{*\infty}^1)=E^2(L),$ to the homology of the mapping cone of $B$, which is $E^2(L^+).$ Therefore, Lemma \ref{lem:filt} implies that $F_{\infty,0}\oplus F_{\infty,1}$ induces a graded isomorphism from $E^k(L)$ to $E^k(L^+)$ for all $k \geq 2$.

 \begin{figure}[!htbp]

 \labellist 
\tiny\hair 2pt 
\pinlabel $*0$ at 45 16 
\pinlabel $*1$ at 222 16 
\pinlabel $*\infty$ at 395 16 
\pinlabel $F_{\infty,0}$ at 135 89 
\pinlabel $F_{\infty,1}$ at 217 176 
\pinlabel $F_{0,1}$ at 310 89

\endlabellist 
\begin{center}
\includegraphics[width=7.8cm]{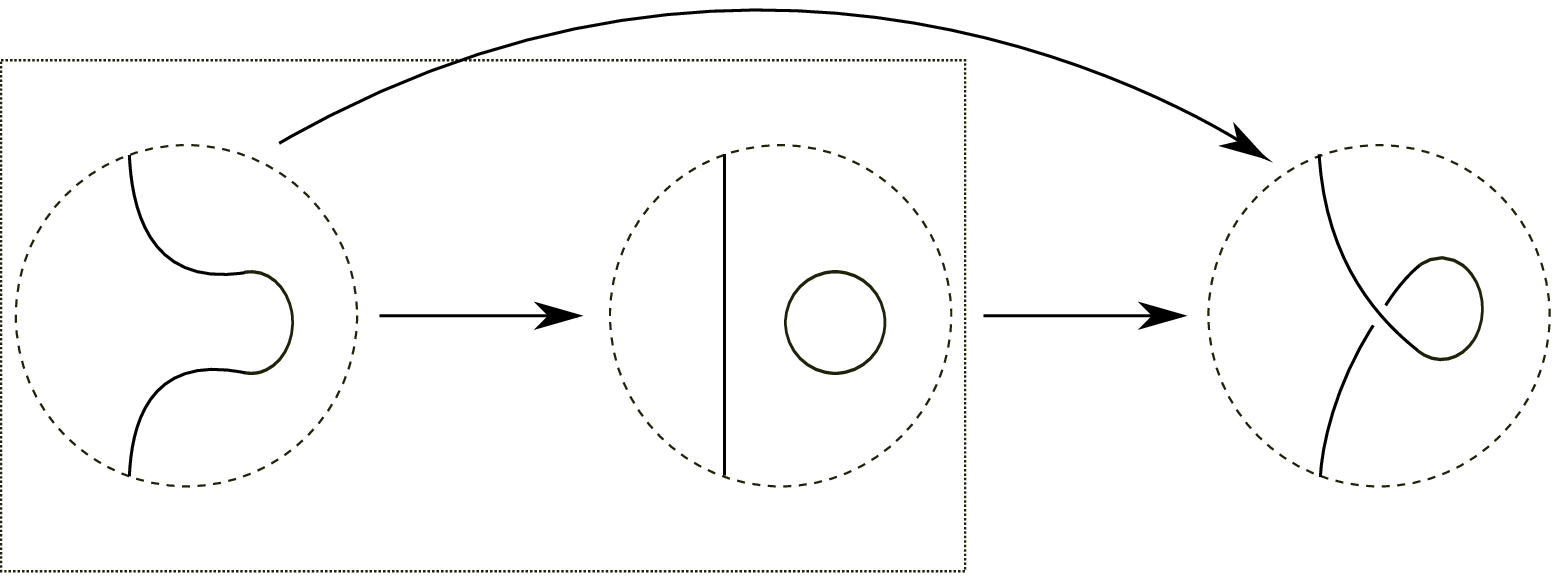}
\caption{\quad The complex $(\widehat X, \widehat D)$. The tangle labeled by $*j$ is meant to represent the complex $(X_{*j},D_{*j})$. The quotient complex surrounded by the box is precisely the complex $(X, D)$.}
\label{fig:t2complex}
\end{center}
\end{figure}

The proof of invariance under a Reidemeister I move which introduces a \emph{negative} crossing is more or less the same. We omit the details, though Figure \ref{fig:t2complex} gives a schematic depiction of the filtered chain map $$F_{\infty,1} \oplus F_{0,1}: (X,D) \rightarrow (X_{*\infty},D_{*\infty})$$ which induces a graded isomorphism from $E^k(L^-)$ to $E^k(L)$ for all $k \geq 2$. In this setting, $(X,D)$ is the complex associated to  the diagram $L^-$ obtained from $L$ via a negative Reidemeister I move. Everything else is defined similarly; as before, we may think of $(X_{*\infty},D_{*\infty})$ as the complex associated to $L$.

\subsection{Reidemeister II}
\label{ssec:RII}
Suppose that $\wt L$ is the diagram obtained from $L$ via a Reidemeister II move. Label the crossings of $\wt L$ by $1,\dots,m+2$ so that crossings $m+1$ and $m+2$ correspond to the top and bottom crossings, respectively, introduced by the Reidemeister II move shown in Figure \ref{fig:RII}. Let $( X,D)$ be the complex associated to a multi-diagram compatible with the framed link $\mathbb{L}_{\wt L}$. For $j\in \{0,1\}^2$, denote by $I_{*j}$ the subset of vectors in $\{0,1\}^{m+2}$ which end with the string specified by $j$. 

\begin{figure}[!htbp]
\labellist 
\small\hair 2pt 
\pinlabel $\wt L$ at 251 4
\pinlabel $L$ at 64 4
\endlabellist 
\begin{center}
\includegraphics[width=5.2cm]{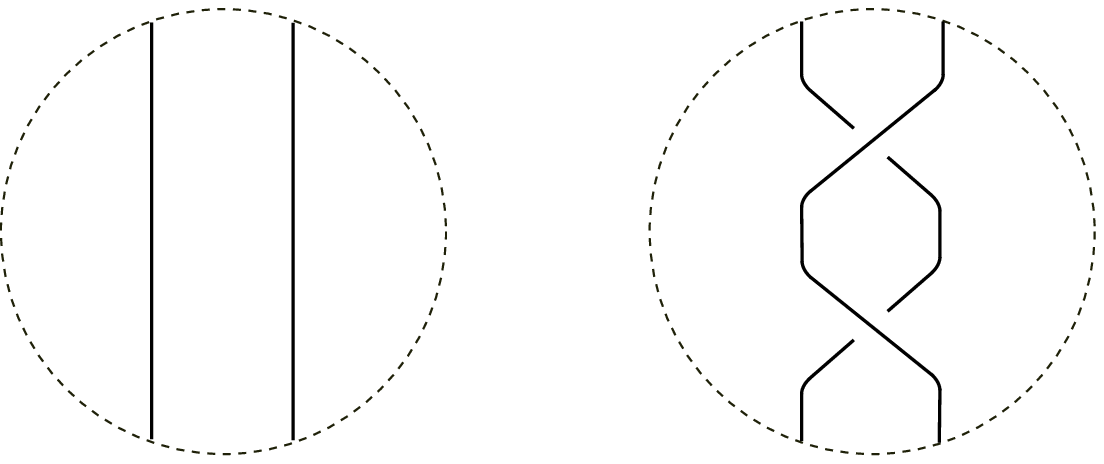}
\caption{}
\label{fig:RII}
\end{center}
\end{figure}

Define $$X_{*j} = \bigoplus_{I\in I_{*j}} \cf(-\Sigma(\wt L_I)),$$ and let $D_{*j}$ be the sum the maps $D_{I,I'}$ over all pairs $I\leq I'$ in $I_{*j}$. Then $$X = \bigoplus_{j\in\{0,1\}^2} X_{*j},$$ and $D$ is the sum of the differentials $D_{*j}$ together with the maps $$F_{j,j'}:X_{*j} \rightarrow X_{*j'}$$ for $j < j',$ where $F_{j,j'}$ is itself the sum of the maps $D_{I,I'}$ over all pairs $I \in I_{*j}$, $I' \in I_{*j'}$ with $I< I'$. Note that the sub-diagram used to define the complex $(X_{*j}, D_{*j})$ is compatible with the framed link $\mathbb{L}_{\wt L_{*j}}$, where $\wt L_{*j}$ is the planar diagram obtained from $\wt L$ by taking the $j_1$-resolution of crossing $m+1$ and the $j_2$-resolution of crossing $m+2$. In particular, we may think of $(X_{*01}, D_{*01})$ as the graded complex associated to the diagram $L$. See Figure \ref{fig:R2} for a more easy-to-digest depiction of the complex $(X,D)$.


\begin{figure}[!htbp]
\labellist 
\tiny\hair 2pt 
\pinlabel $*01$ at 48 218 
\pinlabel $*00$ at 48 7
\pinlabel $*11$ at 261 218 
\pinlabel $*10$ at 261 7
\pinlabel $C$ at 288 66
\pinlabel $F_{00,01}$ at 152 52
\pinlabel $F_{01,11}$ at 152 290
\pinlabel $F_{00,11}$ at 140 185
\pinlabel $F_{00,01}$ at 24 160
\pinlabel $F_{10,11}$ at 286 160

\endlabellist 
\begin{center}
\includegraphics[width=6cm]{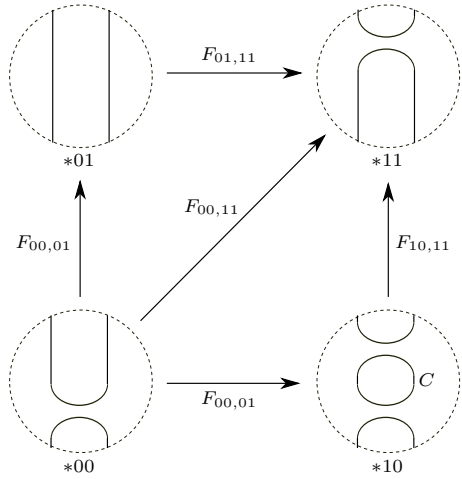}
\caption{\quad In this diagram, the tangle labeled by $*j$ is meant to represent the complex $(X_{*j}, D_{*j})$. We have labeled the circle $C$ in the tangle at the lower right for later reference.}
\label{fig:R2}
\end{center}

\end{figure}

First, cancel all components of $D$ which do not change the $h$-grading. The resulting complex is $(X^{(1)}, D^{(1)})$, where $$X^{(1)} = \bigoplus_{I \in \{0,1\}^2} X^{(1)}_{*j},$$ and $D^{(1)}$ is the sum of the differentials $D^{(1)}_{*j}$ and the adjusted maps $F^{(1)}_{j,j'}$. 
Note that $$X_{*j}^{(1)} \cong \bigoplus_{I\in I_{*j}}\hf(-\Sigma(\wt L_I))\cong  \bigoplus_{I\in I_{*j}}\ckh(\wt L_I).$$ Denote by $A$ (resp. $B$) the sum of the components of $F_{00,10}^{(1)}$ (resp. $F_{10,11}^{(1)}$) which increase the $h$-grading by 1. 
By Theorem \ref{thm:kh}, and via the identification above, we may think of $A$ and $B$ as the maps $$A: \bigoplus_{I\in I_{*00}} \ckh(\wt L_I) \rightarrow \bigoplus_{I\in I_{*10}} \ckh(\wt L_I)$$ and $$B: \bigoplus_{I\in I_{*10}} \ckh(\wt L_I) \rightarrow \bigoplus_{I\in I_{*11}} \ckh(\wt L_I)$$ on the Khovanov chain complex induced by the corresponding link cobordisms. 

Recall that in reduced Khovanov homology, the vector space $\ckh(\wt L_I)$ is a quotient of $V^{\otimes |\wt L_I|}$, where $V$ is the free $\zzt$-module generated by $\bv_+$ and $\bv_-$ and $|\wt L_I|$ is the number of components of $\wt L_I$; we think of each factor of $V$ as corresponding to a circle in the resolution $\wt L_I$. 
Let $X_+$ (resp. $X_-$) be the vector subspace of $X_{*10}^{(1)}$ generated by those elements for which $\bv_+$ (resp. $\bv_-$) is assigned to the circle $C$ in Figure \ref{fig:R2}. Then \begin{equation}\label{eqn:summ}X_{*10}^{(1)} = X_+ \oplus X_-.\end{equation} Note that $A$ is a sum of splitting maps while $B$ is a sum of merging maps. Therefore, the component $A_-$ of $$A:X_{*00}^{(1)} \rightarrow X_{*10}^{(1)}$$ which maps to the second summand of the decomposition in Equation (\ref{eqn:summ}) is an isomorphism; likewise, the restriction $B_+$ of $$B:X_{*10}^{(1)} \rightarrow X_{*11}^{(1)}$$ to the first summand is an isomorphism \cite{kh1}. See Figure \ref{fig:AB} for a pictorial depiction of the composition $B\circ A$.

\begin{figure}[!htbp]
\labellist 
\hair 2pt 
\pinlabel $\oplus$ at 218 124 
\tiny\pinlabel $A$ at 131 371 
\pinlabel $B$ at 301 371 
\tiny \pinlabel $*00$ at 49 301 
\pinlabel $*10$ at 218 301 
\pinlabel $*11$ at 388 301 
\pinlabel $*00$ at 49 68 
\pinlabel $X_+$ at 219 247 
\pinlabel $*11$ at 388 68 
\pinlabel $X_-$ at 219 -2 
\pinlabel $A_-$ at 138 102 
\pinlabel $\cong$ at 128 84 
\pinlabel $B_+$ at 309 168
\pinlabel $\cong$ at 299 151 
\pinlabel $\bv_+$ at 251 185 
\pinlabel $\bv_-$ at 251 56

\endlabellist 
\begin{center}
\includegraphics[width=7.5cm]{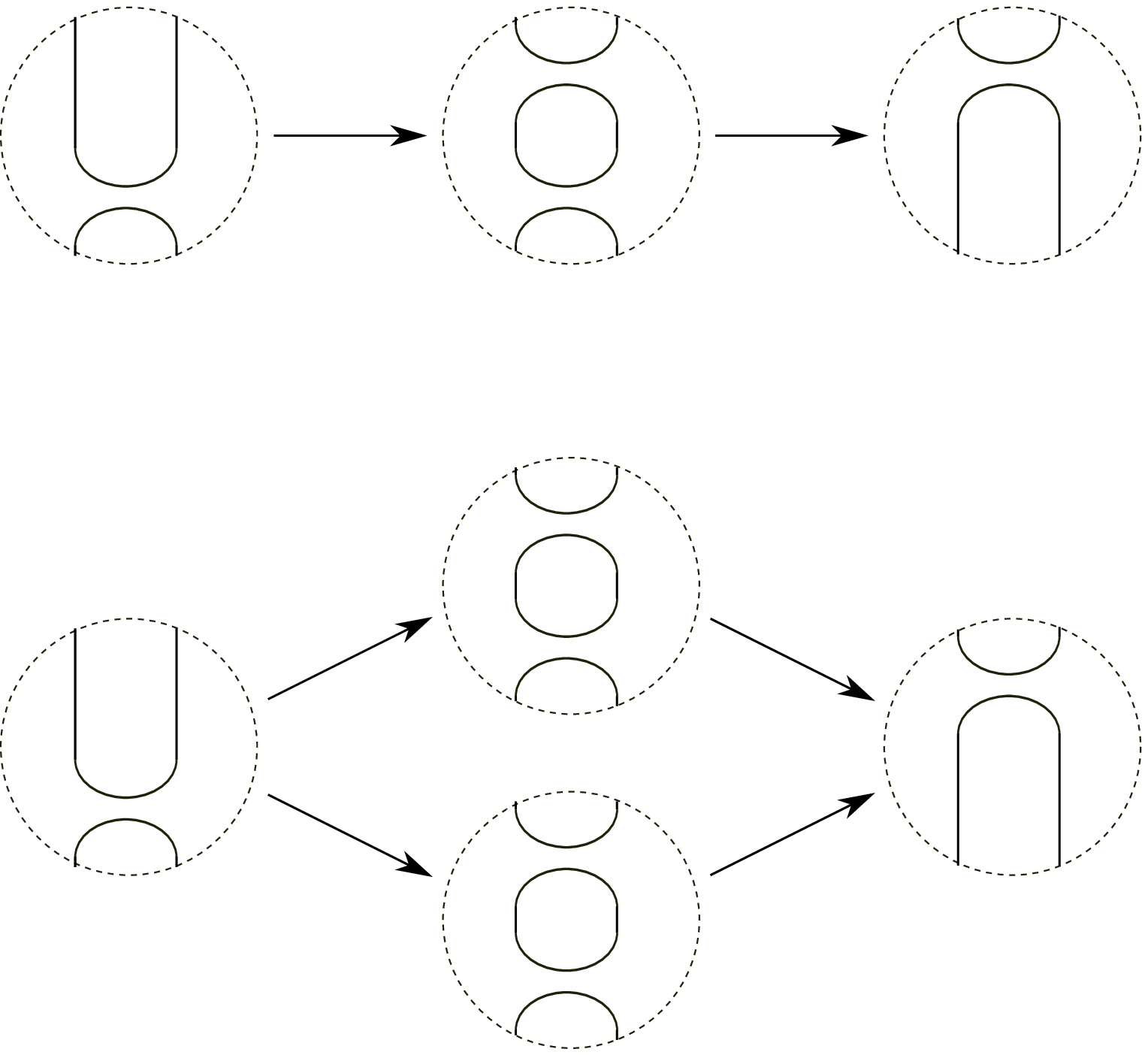}
\caption{\quad In this diagram, the tangle marked $*j$ is meant to represent the vector space $X^{(1)}_{*j}$. The figure on top shows the composition $B\circ A$. The bottom figure illustrates this composition with respect to the identification of $X_{*10}^{(1)}$ with $X_+ \oplus X_-$.}
\label{fig:AB}
\end{center}
\end{figure}

Therefore, after canceling all components $A_-$ and $B_+$, all that remains of $(X^{(1)},D^{(1)})$ is the complex $(X_{*01}^{(1)},D_{*01}^{(1)}).$ It follows that $(X^{(k)}, D^{(k)}) = (X_{*01}^{(k)},D_{*01}^{(k)})$ for all $k \geq 2$. In particular, $E^k(\wt L) \cong E^k(L)$ as graded vector spaces for all $k \geq 2$.

\subsection{Reidemeister III}
\label{ssec:RIII}
The proof of invariance under Reidemeister III moves is very similar in spirit to the proof for Reidemeister II. If $x$ and $y$ are the elementary generators of the braid group on 3 strands, then every Reidemeister III move corresponds to isolating a 3-stranded tangle in $L$ associated to the braid word $yxy$ (or $y^{-1}x^{-1}y^{-1}$), and replacing it with the tangle associated to $xyx$ (or $x^{-1}y^{-1}x^{-1}$) (although we are using braid notation, we are not concerned with the orientations on the strands). We can also perform a Reidemeister III move by isolating a trivial 3-tangle adjacent to the tangle $yxy$, and replacing it with the tangle $xy xy^{-1}x^{-1}y^{-1}$. The concatenation of these two tangles is the tangle $xyxy^{-1}x^{-1}y^{-1} yxy$, which is isotopic to the tangle $xyx$ via Reidemeister II moves: $$xyxy^{-1}x^{-1}y^{-1} yxy\, \,\sim \,\,xyxy^{-1}x^{-1} xy\, \,\sim \,\,xyxy^{-1} y\,\, \sim \,\,xyx$$ (the move from $y^{-1}x^{-1}y^{-1}$ to $x^{-1}y^{-1}x^{-1}$ can also be expressed in this way). Since $E^k(L)$ is invariant under Reidemeister II moves, invariance under Reidemeister III follows if we can show that $E^k(\wt L) \cong E^k(L)$, where $\wt L$ is the diagram obtained from $L$ by replacing a trivial 3-stranded tangle with the tangle associated to the word $xyxy^{-1}x^{-1}y^{-1}$ (see Figure \ref{fig:R3}). 

\begin{figure}[!htbp]
\labellist 
\small\hair 2pt 
\pinlabel $\wt L$ at 251 9 
\pinlabel $L$ at 64 9 
\endlabellist 
\begin{center}
\includegraphics[width=5.4cm]{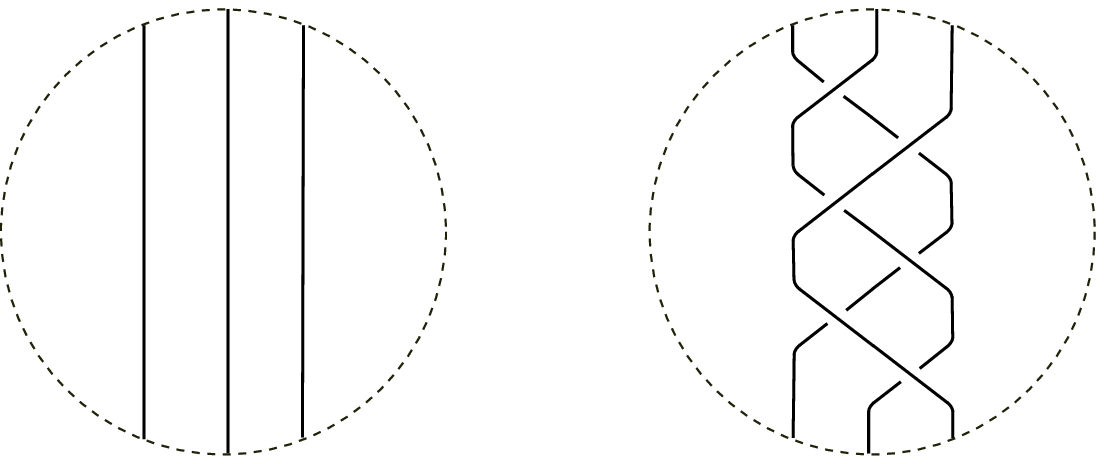}
\caption{}
\label{fig:R3}
\end{center}
\end{figure}

Label the crossings of $\wt L$ by $1,\dots,m+6$ so that crossings $m+1,\dots,m+6$ correspond to the 6 crossings (labeled from top to bottom) introduced by replacing the trivial 3-tangle with the tangle $xyx y^{-1}x^{-1}y^{-1}$ as shown in Figure \ref{fig:R3}. Let $(X,D)$ be the complex associated to a multi-diagram compatible with the framed link $\mathbb{L}_{\wt L}$. For $j\in \{0,1\}^6$, denote by $I_{*j}$ the subset of vectors in $\{0,1\}^{m+6}$ which end with the string specified by $j$. As before, define $$X_{*j} = \bigoplus_{I\in I_{*j}} \cf(-\Sigma(\wt L_I)),$$ and let $D_{*j}$ be the sum the maps $D_{I,I'}$ over all pairs $I\leq I'$ in $I_{*j}$. Then $$X = \bigoplus_{j\in\{0,1\}^6} X_{*j},$$ and $D$ is the sum of the differentials $D_{*j}$ together with the maps $$F_{j,j'}:X_{*j} \rightarrow X_{*j'}$$ for $j < j',$ where $F_{j,j'}$ is the sum of the maps $D_{I,I'}$ over all pairs $I \in I_{*j}$, $I' \in I_{*j'}$ with $I<I'$. We may think of $(X_{*000111}, D_{*000111})$ as the graded complex associated to the diagram $L$. 

Cancel all components of $D$ which do not change the $h$-grading. The resulting complex is $(X^{(1)}, D^{(1)})$, where $$X^{(1)} = \bigoplus_{j \in \{0,1\}^6} X^{(1)}_{*j},$$ and $D^{(1)}$ is the sum of the differentials $D^{(1)}_{*j}$ and the adjusted maps $F^{(1)}_{j,j'}$. As before, $$X_{*j}^{(1)} \cong \bigoplus_{I\in I_{*j}}\hf(-\Sigma(\wt L_I))\cong  \bigoplus_{I\in I_{*j}}\ckh(\wt L_I),$$ and the components of $D^{(1)}$ which increase the $h$-grading by 1 are precisely the maps on reduced Khovanov homology induced by the corresponding link cobordisms.

Our aim is to perform cancellation until all that remains of $(X^{(1)}, D^{(1)})$ is the complex $(X^{(1)}_{*000111}, D^{(1)}_{*000111}).$ The complex $(X^{(1)}, D^{(1)})$ may be thought of as a 6 dimensional hypercube whose vertices are the 64 complexes $(X^{(1)}_{*j}, D^{(1)}_{*j})$ for $j \in \{0,1\}^6$.  The edges of this cube represent the components of the maps $F^{(1)}_{j,j'}$ which increase the $h$-grading by 1; all cancellation will take place among these edge maps. To avoid drawing this 64 vertex hypercube, we perform some preliminary cancellations.

For each $s\in \{0,1\}^3$, consider the 3 dimensional face whose vertices are the 8 complexes $(X^{(1)}_{*ls}, D^{(1)}_{*ls})$, where $l$ ranges over $\{0,1\}^3$. The diagram on the left of Figure \ref{fig:web2a} depicts this face. In this figure, we have zoomed in on the portion of the link which changes as $l$ varies (note that this is the cube of resolutions for the tangle $xyx$).

\begin{figure}[!htbp]
\labellist 

\tiny 
\pinlabel $*000s$ at 40 112
\pinlabel $*010s$ at 160 112
\pinlabel $*101s$ at 286 112
\pinlabel $*111s$ at 407 112
\pinlabel $*001s$ at 160 232
\pinlabel $*100s$ at 160 -7
\pinlabel $*011s$ at 286 232
\pinlabel $*110s$ at 286 -7

\pinlabel $*000s$ at 577 112
\pinlabel $*010s$ at 679 182
\pinlabel $*100s$ at 679 41
\pinlabel $*011s$ at 802 182
\pinlabel $*110s$ at 802 41

\endlabellist

\begin{center}
\includegraphics[width=13.5cm]{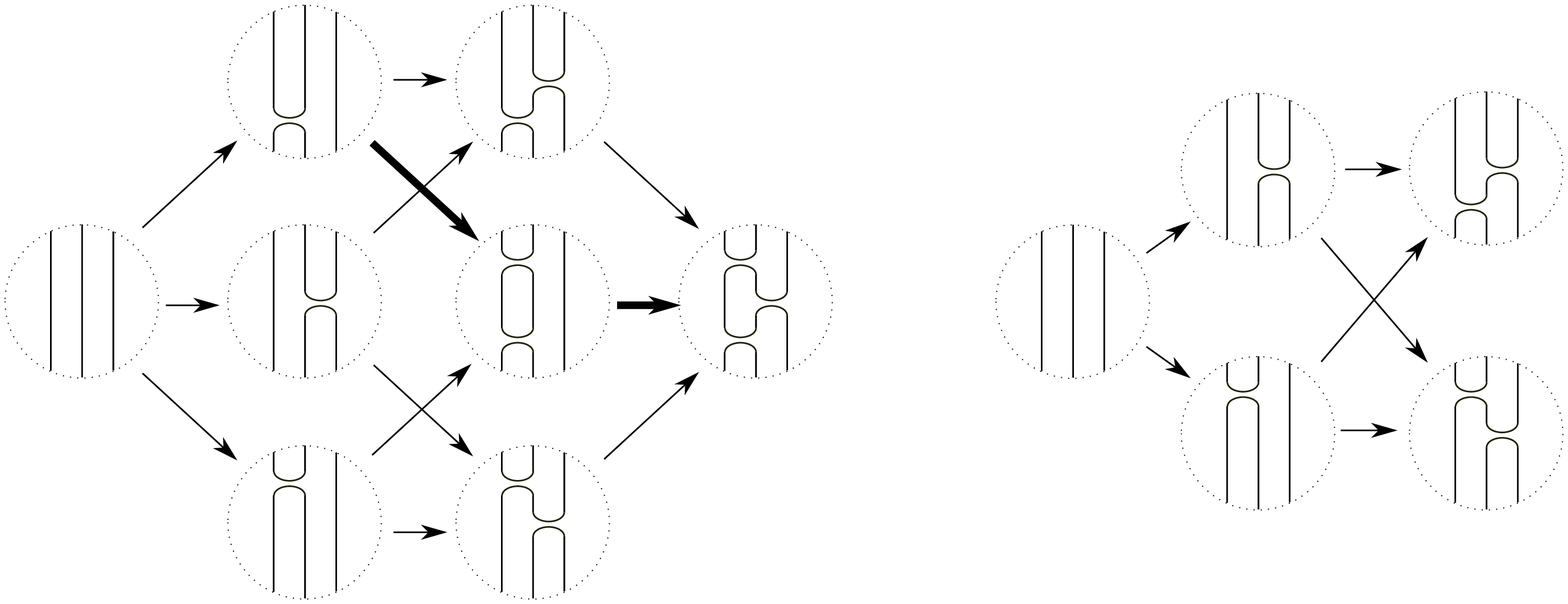}
\caption{\quad On the left is a 3 dimensional face of the hypercube. The tangle labeled $*ls$ represents the complex $(X^{(1)}_{*ls}, D^{(1)}_{*ls})$. The figure on the right is the result of cancellation among components of the maps represented by the thickened arrows on the left.}
\label{fig:web2a}
\end{center}
\end{figure}

\begin{figure}[!htbp]
\labellist 

\tiny 
\pinlabel $*s000$ at 40 112
\pinlabel $*s010$ at 160 112
\pinlabel $*s101$ at 286 112
\pinlabel $*s111$ at 407 112
\pinlabel $*s001$ at 160 232
\pinlabel $*s100$ at 160 -7
\pinlabel $*s011$ at 286 232
\pinlabel $*s110$ at 286 -7

\pinlabel $*s111$ at 808 110
\pinlabel $*s101$ at 707 181
\pinlabel $*s110$ at 707 39
\pinlabel $*s001$ at 583 181
\pinlabel $*s100$ at 583 39

\endlabellist
\begin{center}
\includegraphics[width=13.5cm]{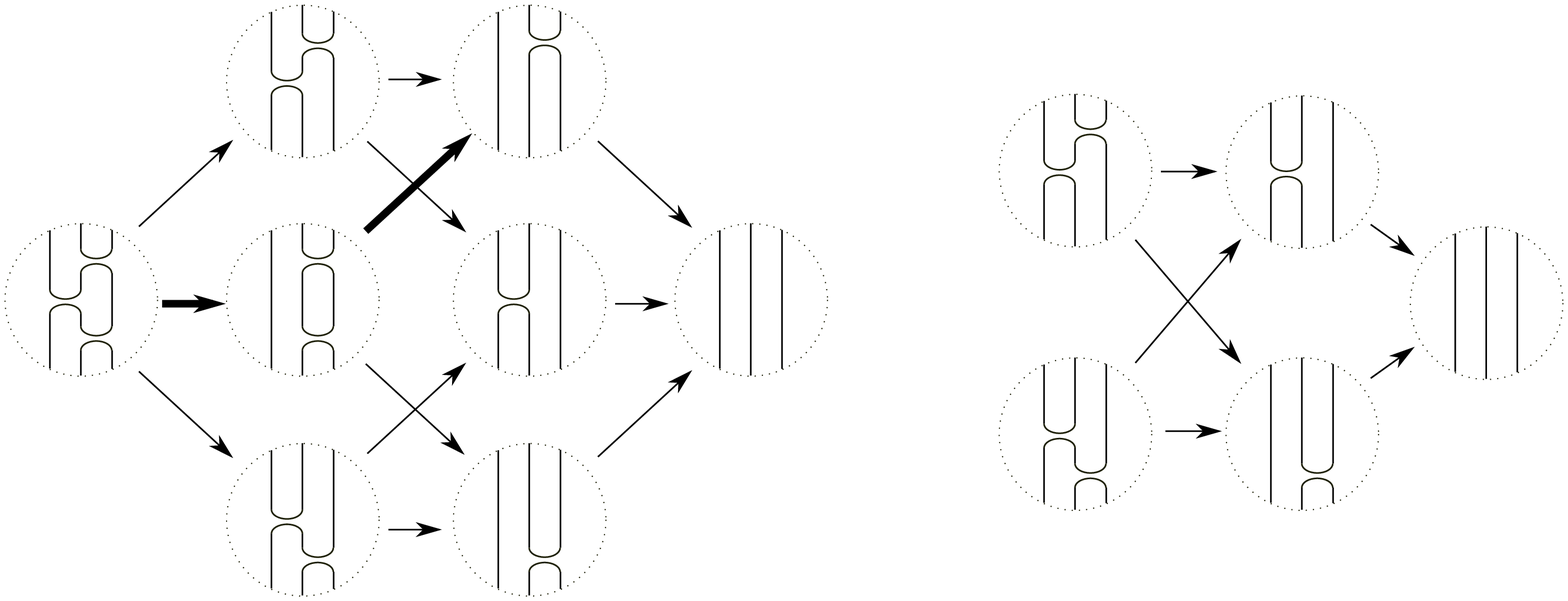}
\caption{\quad As in Figure \ref{fig:web2a}, the tangle labeled $*sl$ represents the complex $(X^{(1)}_{*sl}, D^{(1)}_{*sl})$. The figure on the right results from cancellation as before.}
\label{fig:web3a}
\end{center}
\end{figure}

The thickened arrows in this diagram represent a composition \begin{displaymath} 
\xymatrix{
X^{(1)}_{*001s}  \ar[r]^{A}& X^{(1)}_{*101s} \ar[r]^{B}& X^{(1)}_{*111s}    }
\end{displaymath} of the sort described in Figure \ref{fig:AB} of the previous subsection; that is, $A$ is a sum of splitting maps while $B$ is a sum of merging maps. After canceling the components $A_-$ and $B_+$ of these maps, one obtains the diagram on the right of Figure \ref{fig:web2a}. This cancellation introduces new maps from $X^{(1)}_{*100s}$ to $X^{(1)}_{*011s}$ which, according to \cite{kh1}, are simply the maps on reduced Khovanov homology induced by the obvious link cobordisms. We perform these cancellations for each $s\in \{0,1\}^3$.

Now consider the 3 dimensional face whose 8 vertices are $(X^{(1)}_{*sl}, D^{(1)}_{*sl})$, where $s\in\{0,1\}^3$ is fixed and $l$ ranges over $\{0,1\}^3$, as depicted on the left of Figure \ref{fig:web3a} (for every $s$, this face corresponds to the cube of resolutions for the tangle $y^{-1}x^{-1}y^{-1}$). Again, the thickened arrows represent a composition $B\circ A$ of splitting maps followed by merging maps. After canceling the components $A_-$ and $B_+$, one obtains the diagram on the right of Figure \ref{fig:web3a}. As before, we perform these cancellations for each $s\in \{0,1\}^3$.

\begin{figure}[!htbp]
\labellist 

\tiny 
\pinlabel $*000001$ at 40 513
\pinlabel $*000101$ at 40 395

\pinlabel $*010001$ at 222 745
\pinlabel $*010100$ at 222 628
\pinlabel $*000101$ at 222 513
\pinlabel $*000110$ at 222 395
\pinlabel $*100001$ at 222 280
\pinlabel $*100100$ at 222 164

\pinlabel $*011001$ at 406 924
\pinlabel $*011100$ at 406 807
\pinlabel $*010101$ at 406 691
\pinlabel $*010110$ at 406 574
\pinlabel $*000111$ at 406 457
\pinlabel $*100101$ at 406 340
\pinlabel $*100110$ at 406 223
\pinlabel $*110001$ at 406 107
\pinlabel $*110100$ at 406 -9

\pinlabel $*011101$ at 589 745
\pinlabel $*011110$ at 589 628
\pinlabel $*010111$ at 589 513
\pinlabel $*100111$ at 589 396
\pinlabel $*110101$ at 588 280
\pinlabel $*110110$ at 588 164

\pinlabel $*011111$ at 768 513
\pinlabel $*110111$ at 768 393


\endlabellist
\begin{center}
\includegraphics[width=13cm]{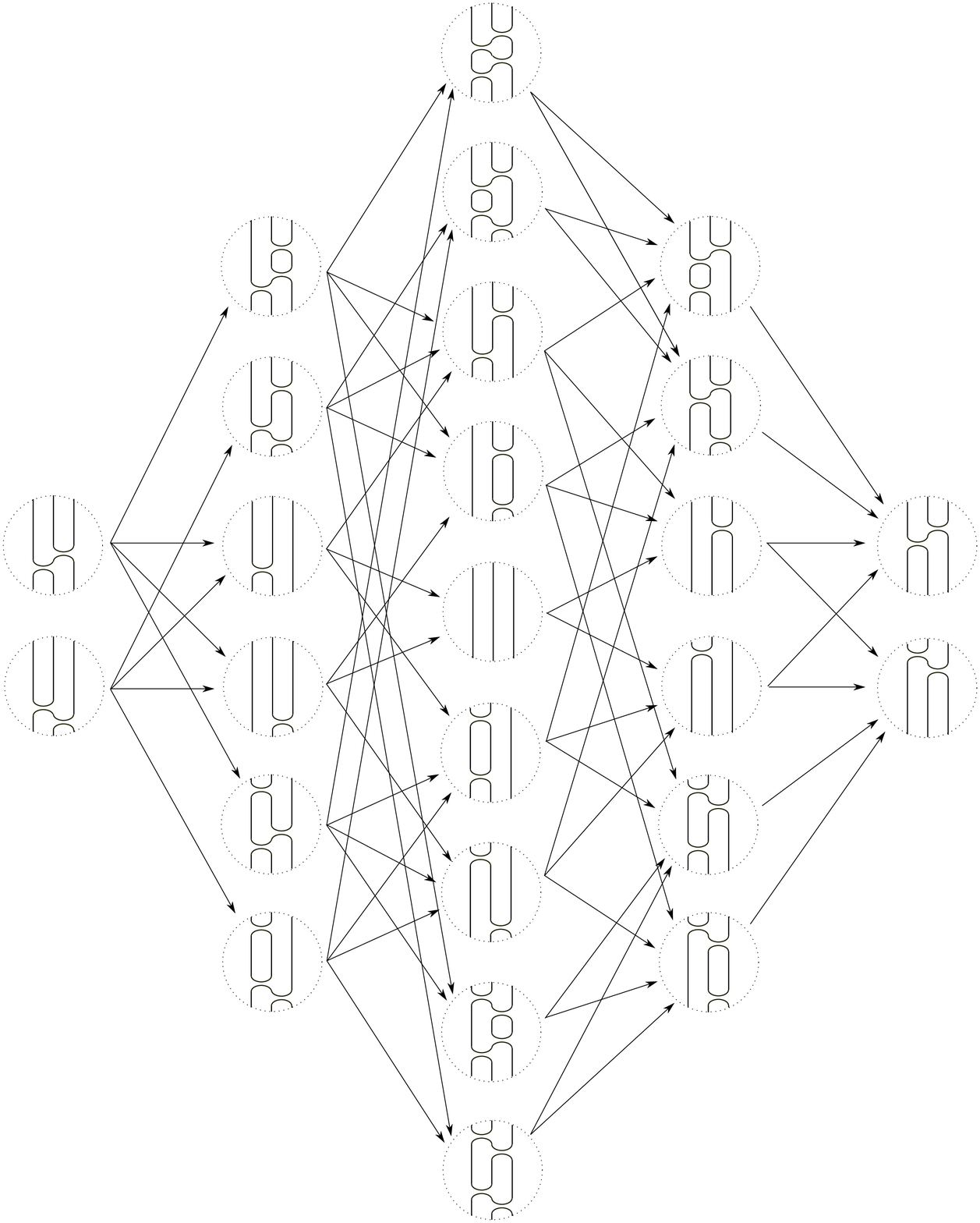}
\caption{\quad This is what remains of $(X^{(1)}, D^{(1)})$ after the preliminary cancellations. The tangle labeled $*j$ represents the complex $(X^{(1)}_{*j}, D^{(1)}_{*j})$.}
\label{fig:web11}
\end{center}
\end{figure}

\begin{figure}[!htbp]

\begin{center}
\includegraphics[width=15cm]{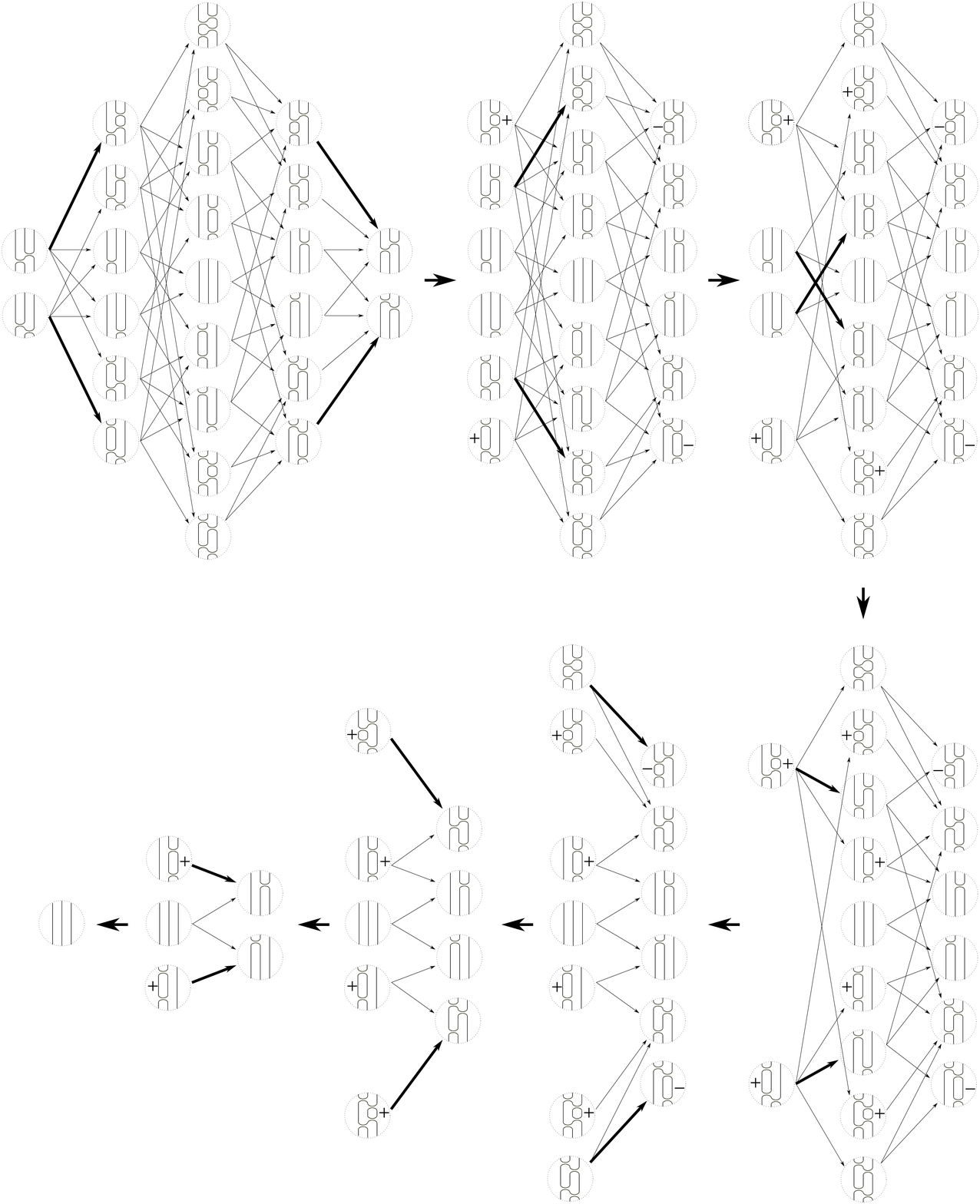}
\caption{\quad A sequence of cancellations which, in the end, leaves only the complex $(X^{(1)}_{*000111}, D^{(1)}_{*000111}).$ At each stage, the cancellations are performed as prescribed by the thickened arrows. A tangle containing a circle marked by a ``$+$" or ``$-$" represents the vector subspace of the corresponding $X^{(1)}_{*j}$ in which $\bv_+$ or $\bv_-$, respectively, is assigned to the circle. }
\label{fig:web16}
\end{center}
\end{figure}

Figure \ref{fig:web11} depicts what is left of the original 64 vertex hypercube for $(X^{(1)}, D^{(1)})$ after performing these preliminary cancellations. The arrows in Figure \ref{fig:web11} represent the components of the resulting maps between pairs, $X^{(1)}_{*j}$ and $X^{(1)}_{*j'}$ with $j<j'$, which increase the $h$-grading by 1 (these are just the maps indicated in the diagrams on the right sides of Figures \ref{fig:web2a} and \ref{fig:web3a}). Next, we perform the sequence of cancellations shown in Figure \ref{fig:web16}. At each stage in this sequence, the thickened arrows dictate which cancellations are to be made, as follows.

Suppose there is a thickened arrow in Figure \ref{fig:web16} from the tangle representing the vector space $Y$ to the tangle representing the vector space $Y'$. This arrow corresponds either to a sum of splitting maps from $Y$ to $Y'$, which we will denote by $A$, or to a sum of merging maps, which we will denote by $B$. In the first case, let $Y'_+$ (resp. $Y'_-$) be the vector subspace of $Y'$ generated by those elements for which $\bv_+$ (resp. $\bv_-$) is assigned to the circle that is split off (either of these subspaces may be empty). Then $$Y' \cong Y'_+ \oplus Y'_-,$$ and the component $A_-$ of $A$ which maps to the second summand in this decomposition is an isomorphism, as discussed in the previous subsection. The thickened arrow, in this case, indicates that we are to cancel this component $A_-$. $Y'_+$ is all that remains of $Y'$ after this cancellation. If $Y'_+$ is non-empty, then, after the cancellation, we represent this vector space by decorating the split-off circle in the corresponding tangle by a ``$+$" sign.

When the thickened arrow corresponds to a sum of merging maps, we let $Y_+$ (resp. $Y_-$) be the vector subspace of $Y$ generated by those elements for which $\bv_+$ (resp. $\bv_-$) is assigned to the circle that is merged. Again, $$Y \cong Y_+ \oplus Y_-,$$ and the restriction $B_+$ of $B$ to the first summand in this decomposition is an isomorphism. In this case, the thickened arrow indicates that we are to cancel this component $B_+$. After this cancellation, all that remains of $Y$ is the subspace $Y_-$. If $Y_-$ is non-empty, then, after the cancellation, we represent this vector space by decorating the merged circle in the corresponding tangle by a ``$-$" sign.

After performing the sequence of cancellations illustrated in Figure \ref{fig:web16}, all that remains is the complex $(X^{(1)}_{*000111}, D^{(1)}_{*000111})$. It follows that $(X^{(k)}, D^{(k)}) = (X^{(k)}_{*000111}, D^{(k)}_{*000111})$ for all $k \geq 2$. In particular, $E^k(\wt L) \cong E^k(L)$ as graded vector spaces for all $k\geq 2$. 

This completes the proof of Theorem \ref{thm:invcE}.

 \qed

\section{A transverse link invariant in $E^k(L)$}
\label{sec:transv}
Let $\xi_{rot} = \text{ker}(dz-ydx+xdy)$ be the rotationally symmetric tight contact structure on $S^3$. By a theorem of Bennequin \cite{benn}, any transverse link in $(S^3,\xi_{rot})$ is transversely isotopic to a closed braid around the $z$-axis. Conversely, it is clear that a closed braid around the $z$-axis may be isotoped through closed braids so that it becomes transverse (the contact planes are nearly vertical far enough from the $z$-axis). 

\begin{theorem}[{\rm \cite{osh,wrinkle}}] 
\label{thm:markov}If $L_1$ and $L_2$ are two closed braid diagrams which represent transversely isotopic links, then $L_2$ may be obtained from $L_1$ by a sequence of braid isotopies and positive braid stabilizations.
\end{theorem}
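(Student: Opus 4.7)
The plan is to split the statement into an easy direction and a hard direction. For the easy direction, I would verify that braid isotopy and positive stabilization both preserve the transverse isotopy class. For braid isotopy, the key observation is that any isotopy through closed braids about the $z$-axis can be $C^0$-perturbed to be transverse to $\xi_{rot}$, since away from the axis the contact planes are nearly vertical. For positive stabilization $\beta \leadsto \beta\sigma_n$, one produces an explicit transverse isotopy in a Darboux ball that absorbs the added positive half-twist: the stabilization looks locally like adding a small positive kink, which can be pulled back along the strand through a family of transverse closed braids.

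For the converse, the plan is to begin from the classical Markov theorem, which guarantees a sequence of braid isotopies together with positive and negative (de)stabilizations between $L_1$ and $L_2$, and then to eliminate the negative moves. A first useful bookkeeping step is the identity $sl(\hat\beta) = w(\beta) - n$ for a closed braid $\beta$ on $n$ strands with writhe $w$: positive stabilization preserves $sl$, while negative stabilization changes it by $-2$. Since $sl$ is a transverse invariant, the negative stabilizations and negative destabilizations in any sequence connecting $L_1$ to $L_2$ must appear in equal number, so the task is to cancel them in pairs.

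The core of the argument, and what I expect to be the main obstacle, is showing that such cancellation can always be achieved by local moves. Here the plan is to invoke the Birman--Menasco braid foliation machinery: realize the topological isotopy as an embedded annular cobordism in $S^3 \times I$, and study the singular foliation induced on it by the pages of the disk open book for the braid axis. The standard analysis reduces the allowable modifications on the braid to braid isotopy, exchange moves, and (de)stabilizations, and one shows that exchange moves are themselves realizable as compositions of braid isotopies and positive (de)stabilizations. The remaining question is whether a negative stabilization followed eventually by a negative destabilization (possibly with intervening moves) can always be rearranged and cancelled; the hard part is tracking the foliation data through the intervening moves and producing an explicit sequence of positive-only Markov moves that achieves the same net effect. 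A contact-geometric refinement, in the spirit of Bennequin and using the characteristic foliation on a spanning surface for the braid, would then be employed to certify that the rearrangement respects transverse isotopy, thereby completing the proof.
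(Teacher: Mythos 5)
The paper does not prove this theorem; it cites it to Orevkov--Shevchishin \cite{osh} and Wrinkle \cite{wrinkle}, so there is no internal argument to compare against. Your sketch is a reasonable outline of Wrinkle's braid-foliation route (the other cited proof, by Orevkov and Shevchishin, is entirely different: it parametrizes the transverse isotopy as a family of closed braids and controls the negative events by a positivity-of-intersections argument for pseudo-holomorphic curves). The easy direction is fine, and the self-linking bookkeeping via $sl(\widehat\beta)=w(\beta)-n$ correctly shows negative stabilizations and destabilizations must occur in equal number.

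The genuine gap is that the cancellation step --- the only part with real content --- is named but not carried out. "Cancel the negative moves in pairs" is not a local operation in a Markov sequence: the negative stabilization and the negative destabilization it must cancel against may be separated by arbitrary braid isotopies and positive moves, and commuting a negative stabilization past these is exactly what one cannot do na\"ively (otherwise the theorem would be trivial and transversal non-simplicity could not occur). Wrinkle's actual argument sets up the annular isotopy surface, analyses the singular braid foliation on it, and reorders the singularities so that the negative events collide; none of this is reproduced here. Likewise, your subsidiary claim that exchange moves are realizable by braid isotopy and positive (de)stabilization alone is a non-trivial lemma (it appears in Birman--Wrinkle's work and is itself proved by a foliation argument), so invoking it without proof risks circularity in exactly the step you flag as the hard one. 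As written, the proposal is a plausible road map rather than a proof.
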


For a closed braid diagram $L$, Plamenevskaya defines a cycle $\wt\khc(L) \in (\ckh(L),d)$ whose image $\khc(L)$ in $\kh(L)$ is an invariant of the transverse link represented by $L$ \cite{pla1}. The cycle $\wt\khc(L)$ lives in the summand $\ckh(L_{I^o})$, where $I^o \in \{0,1\}^n$ is the vector which assigns a $0$ to every positive crossing and a $1$ to every negative crossing. In particular, $L_{I^o}$ is the oriented resolution of $L$, and the branched cover $-\Sigma(L_{I^o})$ is isomorphic to $\#^{s-1}(S^1 \times S^2)$, where $s$ is the number of strands in $L$. It is straightforward to check that, under the identification of $\ckh(L)$ with $$E^1(L) \cong \bigoplus_{I \in \{0,1\}^m} \hf(-\Sigma(L_I)),$$ the cycle $\wt \khc(L)$ is identified with the element $\khc^1(L)$ with the lowest quantum grading in the summand $\hf(-\Sigma(L_{I^o}))$ (compare the definition of $\wt \khc(L)$ in \cite{pla1} with the description of $\ckh(L) \cong E^1(L)$ in \cite[Sections 5 and 6]{osz12}). In this section, we show that $\khc^1(L)$ gives rise to an element $\khc^k(L) \in E^k(L)$ for every $k>1$. The proposition below makes this precise.

\begin{proposition}
\label{prop:cycle}
The element $\khc^k(L)$, defined recursively by $$\khc^k(L) = [\khc^{(k-1)}(L)] \,\,\in \,\,H_*(E^{k-1}(L),D^{k-1}) = E^k(L),$$ is a cycle in $(E^k(L),D^k)$ for every $k > 1$. 
\end{proposition}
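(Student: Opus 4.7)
Plan:

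My strategy is to prove the stronger statement that $\psi^1(L)$ lifts to a chain-level cycle $\Psi\in X$ in the full complex $(X,D)$. Given such a permanent representative, its image in each $E^k(L)$ is automatically a $D^k$-cycle, and a simple induction identifies this image with $\psi^k(L)$ as defined recursively in the statement. The proposition therefore reduces to producing $\Psi$.

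For the construction, I would invoke Theorem \ref{thm:markov} to assume $L$ is a closed braid on $s$ strands. Then $L_{I^o}$ is the identity $s$-braid closure, and $-\Sig(L_{I^o})\cong\#^{s-1}(S^1\times S^2)$, whose hat Heegaard Floer homology is concentrated in the torsion Spin$^c$ structure with minimum Maslov grading $-(s-1)/2$. Under the identification $E^1(L)\cong\bigoplus_I\hf(-\Sig(L_I))$, the cycle $\wt{\khc}(L)$ corresponds to the unique generator of lowest Maslov grading in the summand $\hf(-\Sig(L_{I^o}))$. I would take $\Psi\in\cf(-\Sig(L_{I^o}))\subset X$ to be the distinguished chain-level intersection point realizing this generator.

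It remains to verify $D\Psi=0$. Decompose $D\Psi=\partial_{I^o,I^o}\Psi+\sum_{I^o<I'}D_{I^o,I'}(\Psi)$. The diagonal term vanishes because the Heegaard Floer differential strictly decreases Maslov grading, while $\Psi$ sits at the bottom of its summand. For each cross-term, I would apply a partial quantum-grading argument using the $q$-grading from Section \ref{sec:review}. For a braid, one computes that the minimum $q$-grading on $\hf(-\Sig(L_{I'}))$ exceeds $q(\Psi)$ by at least $2$ whenever $h(I')>0$: changing a positive crossing from the $0$-resolution to the $1$-resolution merges two Seifert circles while raising $h$ by $1$, which raises the minimum $q$ of the target summand by $2$. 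Since the chain-level $D$ is non-decreasing in $q$ (it induces the $q$-preserving Khovanov differential on $E^1$), any contribution to $D_{I^o,I'}(\Psi)$ would have $q\geq q(\Psi)$ but land in a summand of strictly greater minimum $q$, forcing it to vanish.

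The main obstacle is the rigorous chain-level verification that $D$ is $q$-non-decreasing, which requires Maslov-grading bookkeeping for pseudo-holomorphic polygon counts across all Spin$^c$ structures contributing to the cross-term maps. An alternative route bypassing this technicality is to invoke Roberts's theorem from \cite{lrob1}, which identifies the class of $\khc^1(L)$ with the contact invariant $c(\xi_L)\in\hf(-\Sig(L))$; this correspondence essentially asserts that $\khc^1(L)$ is a permanent cycle, from which the proposition follows immediately.
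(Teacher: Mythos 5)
Your high-level strategy --- lifting $\psi^1(L)$ to a chain-level permanent representative --- is a legitimate way to prove the proposition, and it is indeed what Roberts's work supplies (the paper uses exactly this cycle in the proof of Proposition~\ref{prop:vanishing}). But the concrete argument you sketch for verifying $D\Psi = 0$ does not work, and your ``alternative route'' omits a case the paper has to handle separately.

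The $q$-grading verification has a genuine gap, which you partially acknowledge but understate. First, the inference ``the chain-level $D$ is non-decreasing in $q$ because it induces the $q$-preserving Khovanov differential on $E^1$'' is simply false: a map can preserve a grading on homology while shifting it both up and down at the chain level, and the cross-term components $D_{I,I'}$ are polygon-counting maps that shift Maslov grading by a $\text{Spin}^c$-dependent amount; nothing in Section~\ref{sec:review} gives chain-level $q$-monotonicity. Second, your claim that every unit increase of $h$ raises the minimum $q$-grading of the target summand by $2$ because ``changing a positive crossing from the $0$- to the $1$-resolution merges two Seifert circles'' is only correct for the first step out of $I^o$; once circles have merged, a later crossing change can \emph{split} a circle, dropping the minimum degree of the all-minus state. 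So even the combinatorial premise fails for $h(I')>1$, and $D_{I^o,I'}$ has components for all $I' > I^o$, not just immediate successors. There is also an issue at the very first step: you assert that $\Psi$ can be taken to be a single intersection point at the bottom Maslov grading of $\cf(-\Sigma(L_{I^o}))$ with $\partial_{I^o,I^o}\Psi = 0$, but for an arbitrary compatible multi-diagram this generator need not be unique at the chain level; uniqueness is exactly what Roberts's $A$-filtration provides, not the Maslov grading by itself.

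Your alternative route --- invoke Roberts --- is essentially the route the paper takes, but stated too loosely: Roberts's result is not that ``$\psi^1(L)$ is identified with $c(\xi_L)$'' as an abstract correspondence (which would presuppose that $\psi^1$ survives to $E^\infty$), but rather that there is a unique chain-level element in the bottom $A$-filtration level of a suitably reduced complex which is a cycle, whose image in $E^1$ is $\psi^1(L)$, and whose image in $H_*$ is $c(\xi_L)$. Crucially, that $A$-filtration comes from the lift of the braid axis to a \emph{fibered knot} in $-\Sigma(L)$, which requires $L$ to have an odd number of strands. The paper deals with the even-strands case by a separate induction: stabilize to $L^+$ (odd strands), show the $E^k$-level map $\rho^k$ induced by $F_{\infty,0}\oplus F_{\infty,1}$ carries $\psi^k(L)$ to $\psi^k(L^+)$ and is injective, and deduce the cycle property for $L$ from that for $L^+$. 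Your proposal does not address the even-strands case at all, so even the Roberts-based version is incomplete as written.
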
 

Note that Plamenevskaya's invariant $\khc(L)$ is identified with $\khc^2(L)$ under the isomorphism between $\kh(L)$ and $E^2(L)$.

\begin{proof}[Proof of Proposition \ref{prop:cycle}]
First, we consider the case in which $L$ has an odd number of strands. In this case, the braid axis of $L$ lifts to a fibered knot $B\subset -\Sigma(L)$. In \cite{lrob1}, Roberts observes that $B$ gives rise to \emph{another} grading of the complex $(X,D)$ associated to $L$; we refer to this as the ``$A$-grading" of $(X,D)$. The $A$-grading gives rise to an ``$A$-filtration" of $(X,D)$, and Roberts shows that $\khc^1(L)$ is the unique element of $(X^{(1)},D^{(1)})$ in the bottommost $A$-filtration level (see also \cite{baldpla}). Since $D^{(1)}$ does not increase $A$-filtration level (as $D$ is an $A$-filtered map), it follows that the element $\khc^k(L)$ defined in Proposition \ref{prop:cycle} is a cycle in $(X^{(k)},D^{(k)})$ and, hence, in $(E^k(L), D^k)$ for every $k\geq 1$.

Now, suppose that $L$ has an even number of strands, and let $L^+$ be the diagram obtained from $L$ via a positive braid stabilization (i.e. a positive Reidemeister I move). For $k \geq 1$, let $$\rho^k:(E^k(L), D^k) \rightarrow (E^k(L^+), D^k)$$ be the chain map induced by the map $F_{\infty,0}\oplus F_{\infty,1}$ defined in Subsection \ref{ssec:RI}. Recall that $\rho^1=A$ is the sum of the maps $$A_I:\hf(-\Sigma(L^+_{I \times \{\infty\}})) \rightarrow \hf(-\Sigma(L^+_{I \times \{0\}}))$$ over all $I \in \{0,1\}^m$. Let $I^o \in \{0,1\}^{m+1}$ be the vector for which $L^+_{I^o}$ is the oriented resolution of $L$, and define $\wt I^o \in \{0,1\}^m$ by $\wt I^o \times \{0\}=I^o$. Then $\khc^1(L)$ is the element with the lowest quantum grading in $\hf(-\Sigma(L^+_{\wt I^o \times \{\infty\}})),$ and $\khc^1(L^+)$ is the element with the lowest quantum grading in $\hf(-\Sigma(L^+_{\wt I^o \times \{0\}}))$. Since $A_{\wt I^o}$ is the map induced by the 2-handle cobordism from $-\Sigma(L^+_{\wt  I^o \times \{\infty\}})$ to $-\Sigma(L^+_{\wt  I^o \times \{0\}})$ corresponding to $0$-surgery on an unknot, $A_{\wt  I^o}$ sends $\khc^1(L)$ to $\khc^1(L^+)$ (see the discussion of gradings in \cite{osz6}).

Proposition \ref{prop:cycle} now follows by induction. Indeed, suppose that $\rho^{k-1}$ sends $\khc^{(k-1)}(L)$ to $\khc^{(k-1)}(L^+)$ for some $k>1$. Then, since $\khc^{(k-1)}(L^+)$ is a cycle in $(E^{k-1}(L^+),D^{k-1})$ (as $L^+$ has an odd number of strands) and $\rho^{k-1}$ is injective (in fact, $\rho^k$ is an isomorphism for $k\geq 2$), it follows that $\khc^{(k-1)}(L)$ is a cycle in $(E^{k-1}(L),D^{k-1})$, and that $\rho^k$ sends $\khc^{k}(L)$ to $\khc^k(L^+)$.  
\end{proof}

According to the proposition below, the element $\khc^k(L) \in E^k(L)$ is an invariant of the transverse link in $(S^3,\xi_{rot})$ represented by $L$ for each $k \geq 2$.

\begin{proposition}
\label{prop:trans}
If the closed braid diagrams $L_1$ and $L_2$ represent transversely isotopic links in $(S^3,\xi_{rot}),$ then there is an isomorphism from $E^k(L_1)$ to $E^k(L_2)$ which preserves $h$-grading and sends $\khc^k(L_1)$ to $\khc^k(L_2)$ for each $k\geq 2$.
\end{proposition}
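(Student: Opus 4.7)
By the transverse Markov theorem (Theorem \ref{thm:markov}), we may assume $L_2$ is obtained from $L_1$ by a single braid isotopy or a positive braid stabilization. Braid isotopies decompose into sequences of Reidemeister II and III moves (the latter arising from braid relations $\sigma_i\sigma_{i+1}\sigma_i=\sigma_{i+1}\sigma_i\sigma_{i+1}$), so it is enough to verify the proposition whenever $L_2$ differs from $L_1$ by one of the three Reidemeister moves already treated in Section \ref{sec:reid}.

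In each case, the isomorphism $E^k(L_1)\cong E^k(L_2)$ is induced by a filtered chain map between the underlying mapping-cone complexes, and in particular by a chain map at the $E^1$ level. Since $\khc^k(L)$ is defined inductively from $\khc^1(L)$, and any map induced by a filtered chain map commutes with the spectral sequence differentials, it suffices to verify for $k=1$ that the map sends $\khc^1(L_1)$ to $\khc^1(L_2)$; the conclusion for general $k\ge 2$ then follows by induction on $k$. Recall that $\khc^1(L)$ is the lowest quantum-grading element of the summand $\hf(-\Sigma(L_{I^o}))$ of $E^1(L)$, where $I^o$ is the oriented resolution vector.

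For the positive stabilization $L\to L^+$ (Subsection \ref{ssec:RI}), the required identity $\rho^1(\khc^1(L))=\khc^1(L^+)$ was already established in the proof of Proposition \ref{prop:cycle}, using that the relevant component of $\rho^1$ is the cobordism map induced by $0$-surgery on an unknot, which by \cite{osz6} sends bottommost generator to bottommost generator. For Reidemeister II (Subsection \ref{ssec:RII}), the invariance isomorphism comes from the identification of $(X^{(k)},D^{(k)})$ with its surviving subcomplex $(X_{*01}^{(k)},D_{*01}^{(k)})$. The two new crossings of $\wt L$ are necessarily of opposite sign, and the vertex $*01$ assigns $0$ to the positive crossing and $1$ to the negative, which is precisely the oriented resolution pattern. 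Hence $\khc^1(\wt L)\in \hf(-\Sigma(\wt L_{I^o}))\subset X_{*01}^{(1)}$, and under the natural identification of $(X_{*01}^{(1)},D_{*01}^{(1)})$ with the $E^1$-complex of $L$, this element corresponds tautologically to $\khc^1(L)$.

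The Reidemeister III case is handled by the same reasoning applied to the insertion of the $6$-crossing tangle $xyxy^{-1}x^{-1}y^{-1}$ of Subsection \ref{ssec:RIII}: crossings $m+1,m+2,m+3$ are positive and $m+4,m+5,m+6$ are negative, so the oriented resolution pattern at the new crossings is $000111$, matching the surviving vertex of the cancellation sequence in Figure \ref{fig:web16}. The main obstacle is the bookkeeping required to confirm that none of the cancellations depicted in Figure \ref{fig:web16} touch the $*000111$ vertex or disturb the bottommost quantum-graded generator in its oriented-resolution summand; this is visible from the figure, but one must check step by step that every cancellation arrow originates from a non-surviving vertex, so that $\khc^1(\wt L)$ passes through each stage unaltered and is identified with $\khc^1(L)$ in the end.
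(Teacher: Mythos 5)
Your proposal follows the same route as the paper: reduce via the transverse Markov theorem to positive stabilization and braid isotopies, handle stabilization by appealing to the argument already made in the proof of Proposition \ref{prop:cycle}, and handle Reidemeister II and III via the cancellation identifications $(X^{(k)},D^{(k)}) = (X_{*01}^{(k)},D_{*01}^{(k)})$ and $(X^{(k)},D^{(k)}) = (X_{*000111}^{(k)},D_{*000111}^{(k)})$ from Section \ref{sec:reid}. The paper disposes of the RII and RIII cases with a ``clearly identified,'' whereas you explicitly point out the fact that makes this clear: the oriented resolution vector $I^o$ of $\wt L$ ends in $01$ (respectively $000111$), precisely because crossing $m+1$ is positive and $m+2$ is negative (resp.\ $m+1,m+2,m+3$ positive and $m+4,m+5,m+6$ negative), so $\khc^1(\wt L)$ already lives in the surviving vertex and is taken to $\khc^1(L)$ tautologically under the identification. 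This is a helpful unpacking rather than a different argument. One small caveat in your write-up: the reduction ``it suffices to check $k=1$'' is a bit stronger than what you actually use in the RII/RIII cases, since there the identification is an equality of complexes at every stage of cancellation, so no separate induction on $k$ is needed; for the stabilization case you correctly invoke the inductive tracking of $\rho^k$ already carried out in Proposition \ref{prop:cycle}.
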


\begin{proof}[Proof of Proposition \ref{prop:trans}]
According to Theorem \ref{thm:markov}, it suffices to check Proposition \ref{prop:trans} for diagrams which differ by a positive braid stabilization or a braid isotopy. If $L^+$ is the diagram obtained from $L$ via a positive braid stabilization, then the isomorphism $$\rho^k: E^k(L) \rightarrow E^k(L^+)$$ sends $\khc^k(L)$ to $\khc^k(L^+)$ for each $k\geq 2$, as shown in the proof of Proposition \ref{prop:cycle}. 

Every braid isotopy is a composition of Reidemeister II and III moves. Suppose that $\wt L$ is the diagram obtained from $L$ via a Reidemeister II move. In this case, $(X^{(k)}, D^{(k)}) \cong (X_{*01}^{(k)}, D_{*01}^{(k)})$ for each $k \geq 2$, where $(X,D)$ and $(X_{*01},  D_{*01})$ are the complexes associated to $\wt L$ and $L$, respectively (see Subsection \ref{ssec:RII}). Under this isomorphism, $\khc^k(L^+)$ is clearly identified with $\khc^k(L)$. 

The same sort of argument applies when $\wt L$ is the diagram obtained from $L$ by replacing a trivial 3-tangle with the tangle associated to the braid word $xyxy^{-1}x^{-1}y^{-1}.$ In this case, $(X^{(k)}, D^{(k)}) = (X_{*000111}^{(k)}, D_{*000111}^{(k)})$ for each $k \geq 2$, where $(X,D)$ and $(X_{*000111}, D_{*000111})$ are the complexes associated to $\wt L$ and $L$ (see Subsection \ref{ssec:RIII}). Again, it is clear that $\khc^k(L^+)$ is identified with $\khc^k(L)$ under this isomorphism.

\end{proof}

The proof of Proposition \ref{prop:vanishing} follows along the same lines as the proof of Proposition 1.4 in \cite{baldpla}. We may assume that the braid diagram $L$ for our transverse link has $2s+1$ strands. The complex $(X,D)$ associated to the diagram $L$ is generated by elements which are homogeneous with respect to both the $h$-grading and the $A$-grading mentioned in the proof of Proposition \ref{prop:cycle}. After canceling all components of $D$ which do not shift either of the $h$- or $A$-gradings, we obtain a complex $(X',D')$ which is bi-filtered chain homotopy equivalent to $(X,D)$. Let $E^k(L)'$ denote the $E^k$ term of the spectral sequence associated to the $h$-filtration of $(X',D')$ (clearly, $E^k(L)'$ is isomorphic to $E^k(L)$). 

Roberts shows that there is a unique element $c \in (X',D')$ in $A$-filtration level $-s$, whose image in $H_*(X',D') \cong \hf(-\Sigma(L))$ corresponds to the contact element $c(\xi_L)$, and whose image in $E^1(L)'$ corresponds to $\khc^1(L)$. Therefore, Proposition \ref{prop:vanishing} boils down to the statement that if the image of $c$ in $E^k(L)'$ vanishes and $E^k(L)'$ is supported in non-positive $h$-gradings, then the image of $c$ in $H_*(X',D')$ vanishes.

\begin{proof}[Proof of Proposition \ref{prop:vanishing}]
We will prove this by induction on $k$. Suppose that the statement above holds for $1\leq n<k$ (it holds vacuously for $n=1$). Let $c^{(k-1)}$ denote the element of $X'^{(k-1)}$ represented by $c$, and assume that $c^{(k-1)}$ is non-zero. Then the image of $c$ in $H_*(X',D')$ corresponds to the image of $c^{(k-1)}$ in $H_*(X'^{(k-1)}, D'^{(k-1)})$.

Let $K = n(L)-n_-(L)$, where $n(L)$ is the total number of crossings in $L$. The $h$-filtration of $(X',D')$ induces an $h$-fitration of $(X'^{(k-1)},D'^{(k-1)})$: $$\{0\} = \mathcal{F}_{K+1} \subset  \mathcal{F}_{K} \subset \dots \subset  \mathcal{F}_{-n_-(L)} = X'^{(k-1)}.$$ Let us assume that $E^k(L)'$ is supported in non-positive $h$-gradings. If the image of $c$ in $E^k(L)'$ is zero, then there must exist some $y \in X'^{(k-1)}$ with $h(y)=-(k-1)$ such that $D'^{(k-1)}(y) = c^{(k-1)} + x$, where $x \in \mathcal{F}_1.$ Let $J$ be the greatest integer for which there exists some $y'$ such that $D'^{(k-1)}(y') = c^{(k-1)} + x'$, where $x' \in \mathcal{F}_J$. We will show that $J=K+1$, which implies that $x'=0$, and, hence, that $c^{(k-1)}$ is a boundary in $(X'^{(k-1)}, D'^{(k-1)})$ (which implies that $c$ is a boundary in $(X',D')$).

Suppose, for a contradiction, that $J<K+1$. Write $x' = x_J + x''$, where $h(x_J) = J$ and $x'' \in \mathcal{F}_{J+1}$. Note that $D'^{(k-1)}(x_J+x'')=0$ as $x' = x_J + x''$ is homologous to $c^{(k-1)}$. Since every component of $D'^{(k-1)}$ shifts the $h$-grading by at least $k-1$, it follows that $D'^{(k-1)}(x'') \in \mathcal{F}_{J+k}.$ But this implies that $D'^{(k-1)}(x_J) \in  \mathcal{F}_{J+k}$ as well, since $D'^{(k-1)}(x_J + x'') = 0$. Therefore, $x_J$ represents a cycle in $(E^{k-1}(L)',D'^{k-1})$. Since $J\geq 1$ and $E^k(L)'$ is supported in non-positive $h$-gradings, it must be that $x_J$ is also a \emph{boundary} in $(E^{k-1}(L)',D'^{k-1})$. That is, there is some $y''$ with $h(y'') = J-(k-1)$ such that $D'^{(k-1)}(y'') = x_J + x'''$, where $x''' \in \mathcal{F}_{J+1}$. But then, $D'^{(k-1)}(y'+y'') = c^{(k-1)}+(x'' + x''')$, and the fact that $x'' + x'''$ is contained in $\mathcal{F}_{J+1}$ contradicts our earlier assumption on the maximality of $J$.

To finish the proof of Proposition \ref{prop:vanishing}, recall that $c(\xi_L)=0$ implies that $\xi_L$ is not strongly symplectically fillable \cite{osz2}.

\end{proof}


\section{Two examples}
\label{sec:example}

Recall that a planar link diagram is said to be \emph{almost alternating} if one crossing change makes it alternating. An almost alternating link is a link with an almost alternating diagram, but no alternating diagram. As was mentioned in the introduction, Conjecture \ref{conj:quant} holds for almost alternating links.

\begin{lemma}
\label{lem:almostalt}
If the link $L$ is almost alternating, then there is a well-defined quantum grading on $E^k(L)$ for $k\geq 2$, and $D^k$ increases this grading by $2k-2$.
\end{lemma}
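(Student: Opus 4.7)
The plan is to exploit the known structural fact that the reduced Khovanov homology of an almost alternating link $L$ is supported in at most two adjacent $\delta$-gradings; after a grading shift we take these to be $\delta_0$ and $\delta_0-1$. Under the identification $E^2(L)\cong\kh(L)$, this yields a decomposition $E^2(L)=A\oplus B$, where $A$ sits in $\delta$-grading $\delta_0$ and $B$ in $\delta_0-1$, each carrying the inherited $q$-grading via $q=2(\delta+h)$. This settles the lemma on the $E^2$ page.

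I would then proceed by induction on $k\geq 2$, with hypothesis that $E^k(L)=A^k\oplus B^k$ inherits a well-defined $q$-grading supported in $\delta$-gradings $\{\delta_0,\delta_0-1\}$, and that $D^k$ shifts this $q$-grading by $2k-2$. Given the decomposition, $D^k$ splits into four components $D^k_{AA}+D^k_{AB}+D^k_{BA}+D^k_{BB}$ according to source and target, with $q$-shifts $2k$, $2k-2$, $2k+2$, and $2k$ respectively. The entire inductive step reduces to showing that only $D^k_{AB}$ is nonzero; once this is in hand, $E^{k+1}(L)=\ker D^k/\mathrm{im}\,D^k$ inherits the two-$\delta$-support decomposition from $E^k(L)$, the $q$-grading descends, and the induction propagates.

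To rule out the three off-target components, I would compare ranks of the $\delta$-graded pieces across the spectral sequence pages with those at $E^\infty(L)\cong\hf(-\Sig(L))$. By the induction, $E^\infty(L)$ is a subquotient of $E^2(L)=A\oplus B$, so it inherits a decomposition $A^\infty\oplus B^\infty$, and the individual ranks $\mathrm{rk}\,A^\infty$ and $\mathrm{rk}\,B^\infty$ are pinned down by the $\delta$-graded Euler characteristic of $E^2(L)$ (a refinement of $V_L(q)$) together with the known graded rank of $\hf(-\Sig(L))$ for almost alternating $L$. A nonzero $D^k_{AA}$ or $D^k_{BB}$ would cancel pairs within one $\delta$-grading, strictly reducing one of $\mathrm{rk}\,A^\infty$ or $\mathrm{rk}\,B^\infty$ below its pinned-down value; a nonzero $D^k_{BA}$ would redistribute rank from the lower $\delta$-level to the upper in an asymmetric way. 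Both scenarios contradict the $\delta$-graded ranks at $E^\infty(L)$, so all three components must vanish.

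The main obstacle is pinning down the $\delta$-graded ranks of $\hf(-\Sig(L))$ precisely enough to execute the rank comparison. For almost alternating $L$, the branched double cover $\Sig(L)$ admits a surgery description from an alternating link's double cover (an $L$-space), and the resulting $\delta$-graded rank of $\hf(-\Sig(L))$ can be read off; filling in this computation completes the induction and establishes the lemma.
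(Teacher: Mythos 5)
Your proposal starts from the same observation as the paper (an almost alternating $L$ has $\kh(L)$ supported in two consecutive $\delta$-gradings, via the mapping cone over the two alternating resolutions at the distinguished crossing), but the strategy for ruling out the $\delta$-preserving and $\delta$-increasing components of $D^k$ is different, and I don't think it closes.

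The paper rules out the bad components \emph{structurally}: the chain complex $(X,D)$ for $L$ is a mapping cone of a filtered map from the complex for $L_0$ (the $\delta_0$ piece) to the complex for $L_1$ (the $\delta_0-1$ piece). Because $L_0$ and $L_1$ are alternating, their link surgeries spectral sequences collapse at $E^2$, so any nonvanishing higher differential in the sequence for $L$ must be a ``cross'' term, and the subcomplex/quotient structure forbids components going from the $L_1$ side to the $L_0$ side. Hence only the $\delta$-shift $-1$ survives, and the quantum shift $2k-2$ is forced by $\Delta h = k$.

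Your replacement of this structural step — a rank comparison with $E^\infty(L)$ — has a genuine gap. You propose to pin down $\operatorname{rk} A^\infty$ and $\operatorname{rk} B^\infty$ from ``the known graded rank of $\hf(-\Sigma(L))$,'' but $\hf(-\Sigma(L))$ has no intrinsic $\delta$-grading: the $\delta$-grading on $E^\infty(L)$ exists only through the spectral sequence, and its relation to, say, the Maslov grading is precisely part of what is conjectural (compare the discussion around Conjecture~\ref{conj:quant}). So one cannot ``read off'' $\delta$-graded ranks of $E^\infty$ from Heegaard Floer data and then contradict them — that's circular. The auxiliary tool you invoke, the $\delta$-graded Euler characteristic of $E^2(L)$, also does not survive the higher differentials: a component of $D^k$ with $\Delta h = k$ and $\Delta\delta = 0$ changes $\chi(E^\cdot_\delta)$ by $-(-1)^h(1+(-1)^k)$, which is nonzero for even $k$, and a $\delta$-shift $-1$ component changes the two $\delta$-graded Euler characteristics by unequal amounts; so the quantity you want to hold fixed is not a spectral-sequence invariant, and even if it were, together with the total rank of $\hf(-\Sigma(L))$ it underdetermines the two $\delta$-graded ranks. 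In short, the global rank/Euler-characteristic bookkeeping cannot substitute for the mapping cone argument; you need the structural fact that the $\delta_0$ and $\delta_0-1$ pieces of $E^2(L)$ are governed, respectively, by the (collapsed) spectral sequences of $L_0$ and $L_1$, with no differential going ``upward'' in $\delta$.
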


\begin{proof}[Proof of Lemma \ref{lem:almostalt}]
In an abuse of notation, we let $L$ denote both the underlying link and an almost alternating diagram for the link. The diagram $L$ can be made alternating by changing some crossing $c$. Let $L_0$ and $L_1$ be the link diagrams obtained by taking the $0$- and $1$-resolutions of $L$ at $c$. Since both $L_0$ and $L_1$ are alternating, each of $\kh(L_0)$ and $\kh(L_1)$ is supported in a single $\delta$-grading. If $G$ is a bi-graded group, we let $G[a,b]$ denote the group obtained from $G$ by shifting the bi-grading by $[a,b]$. It follows from Khovanov's original definition \cite{kh1} that $\kh(L)$ is the homology of the mapping cone of a map $$f:\kh(L_0)[a,b] \rightarrow \kh(L_1)[c,d]$$ for some integers $a$, $b$, $c$ and $d$. Therefore, $\kh(L)$ is supported in at most two $\delta$-gradings.

If $\kh(L)$ is supported in one $\delta$-grading then the lemma holds trivially since all higher differentials vanish and $E^k(L) \cong \kh(L)$ for each $k\geq 2$. Otherwise, $\kh(L)$ is supported in two consecutive $\delta$-gradings, $\delta_0$ and $\delta_1 = \delta_0-1$, corresponding to the two groups $\kh(L_0)[a,b]$ and $\kh(L_1)[c,d]$, respectively \cite{AP,ck2}. Since all higher differentials vanish in the spectral sequences associated to $L_0$ and $L_1$, any non-trivial higher differential in the spectral sequence associated to $L$ must shift the $\delta$ grading by $-1$. And, because the differential $D^k$ shifts the homological grading by $k$, it follows that $D^k$ shifts the quantum grading by $2k-2$ (recall that the $\delta$-grading is one-half the quantum grading minus the homological grading). Well-definedness of the quantum grading on each term follows from the fact that the differentials $D^k$ are homogeneous.

\end{proof}

We now use Lemma \ref{lem:almostalt} together with Proposition \ref{prop:cycle} to compute the higher terms in the spectral sequence associated to the torus knot $T(3,5)$. First, note that the branched double cover $\Sigma(T(3,5))$ is the Poincar{\'e} homology sphere $\Sigma(2,3,5)$, whose Heegaard Floer homology has rank 1. On the other hand, the reduced Khovanov homology of $T(3,5)$ has rank 7, and its Poincar{\'e} polynomial is $$\kh(T(3,5))(h,q) = \underline{h^{0}q^{8}} + \underline{h^{2}q^{12}} + \underline{h^{3}q^{14}} + h^{4}q^{14} + \underline{h^{5}q^{18}} + h^{6}q^{18} + h^{7}q^{20}$$ (here, the exponent of $h$ indicates the homological grading, while the exponent of $q$ indicates the quantum grading). The underlined terms represent generators supported in $\delta$-grading 4, while the other terms represent generators supported in $\delta$-grading 3. The grid in Figure \ref{fig:grid} depicts $\kh(T(3,5))$; the numbers on the horizontal and vertical axes are the homological and quantum gradings, respectively, and a dot on the grid represents a generator in the corresponding bi-grading.

 \begin{figure}[!htbp]

 \labellist 
 \small\pinlabel $q$ at 23 351 
\tiny\hair 2pt

\pinlabel $8$ at 6 43 
\pinlabel $10$ at 6 84 
\pinlabel $12$ at 6 128 
\pinlabel $14$ at 6 170 
\pinlabel $16$ at 6 211 
\pinlabel $18$ at 6 252 
\pinlabel $20$ at 6 293 

\pinlabel $0$ at 46 5 
\pinlabel $1$ at 88 5
\pinlabel $2$ at 130 5
\pinlabel $3$ at 172 5
\pinlabel $4$ at 215 5
\pinlabel $5$ at 259 5
\pinlabel $6$ at 300 5
\pinlabel $7$ at 343 5

\small\pinlabel $h$ at 396 22

\pinlabel $\bullet$ at 46 43
\pinlabel $\bullet$ at 130 128
\pinlabel $\bullet$ at 172 170
\pinlabel $\bullet$ at 215 170
\pinlabel $\bullet$ at 259 252
\pinlabel $\bullet$ at 300 252
\pinlabel $\bullet$ at 343 293

\endlabellist
 \begin{center}
\includegraphics[height=5.7cm]{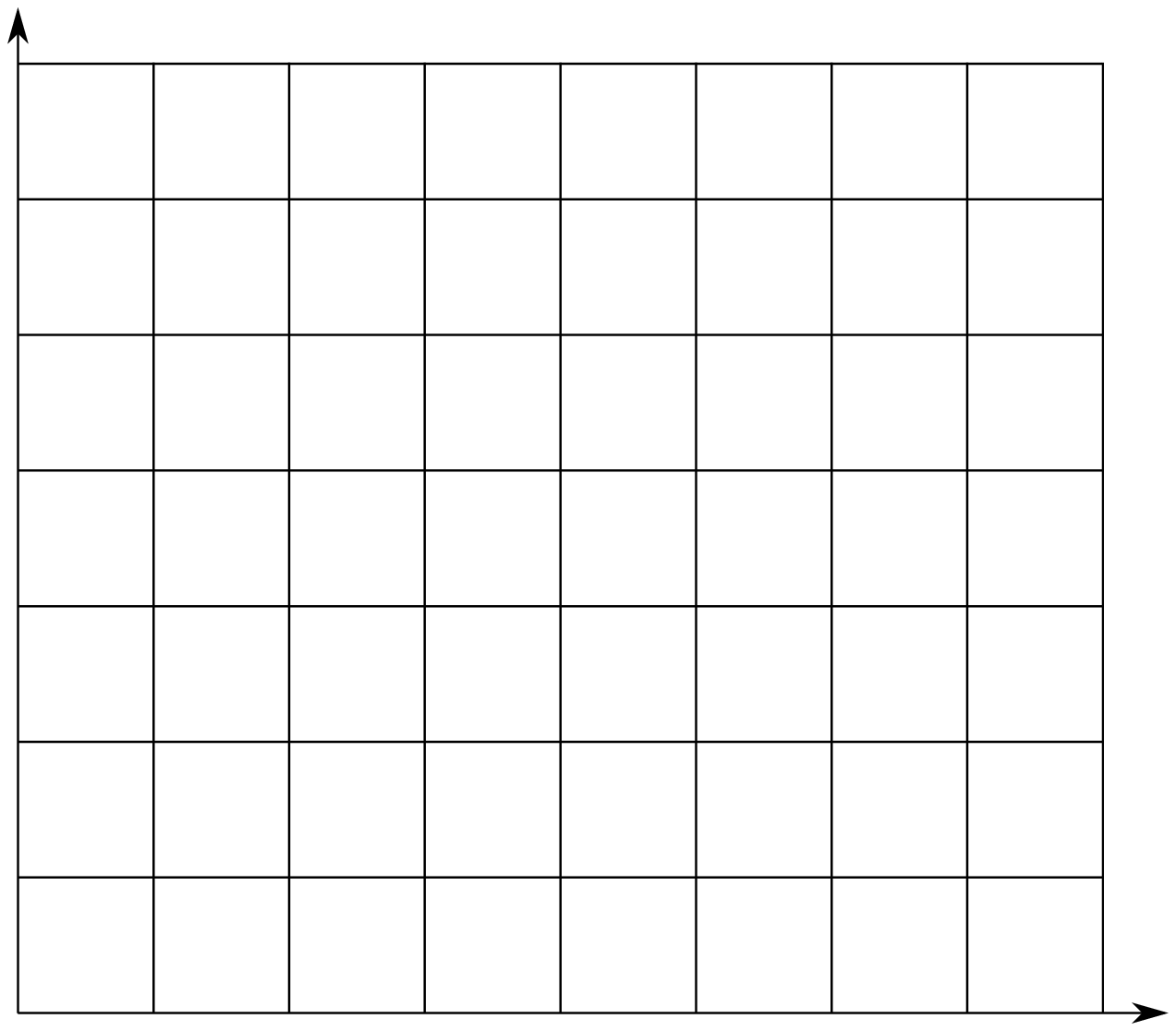}
\caption{\quad $\kh(T(3,5))$.}
\label{fig:grid}
\end{center}
\end{figure}

Observe that $T(3,5)$ is the same as the pretzel knot $P(-2,3,5)$ (both describe the knot $10_{124}$). Therefore, $T(3,5)$ is almost alternating, as indicated in Figure \ref{fig:almostalt}, and the differential $D^k$ increases quantum grading by $2k-2$ by Lemma \ref{lem:almostalt}. Moreover, if $x$ and $y$ are the elementary generators of the braid group on three strands, then $T(3,5)$ is the closure of the 3-braid specified by the word $(xy)^5$. If $\mathcal{T}$ is the transverse knot in $S^3$ represented by this closed braid, then $\psi(\mathcal{T}) \neq 0$ since the braid is positive \cite{pla1}. 

\begin{figure}[!h]
\label{fig:almostalt}

\begin{center}
\includegraphics[width=7.5cm]{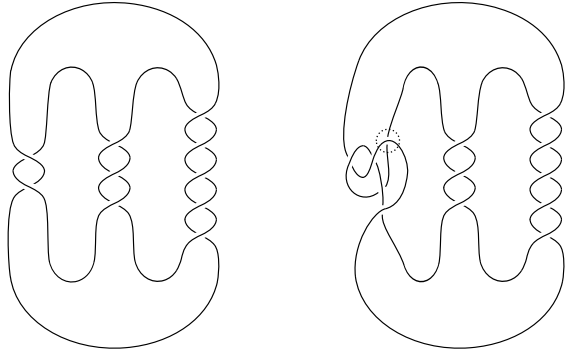}
\caption{\quad On the left is a standard diagram for $T(3,5) = P(-2,3,5).$ On the right is an almost alternating diagram for $P(-2,3,5)$ which can be made alternating by changing the circled crossing. }
\end{center}
\end{figure}

Note that $\psi(\mathcal{T})$ is the generator of $\kh(T(3,5))$ in bi-grading $(0,8)$. By Proposition \ref{prop:cycle}, $\psi(\mathcal{T})$ gives rise to a cycle $\psi^k(\mathcal{T})$ in $(E^k(T(3,5)), D^k)$ for $k\geq 2$. Since $\kh(T(3,5))$ is supported in non-negative homological gradings and the differential $D^k$ increases homological grading by $k$, each $\psi^k(\mathcal{T})$ is non-zero. Therefore, $\psi^{\infty}(\mathcal{T})$ generates $E^{\infty}(T(3,5)) \cong \hf(-\Sigma(T(3,5)) \cong \zzt$. 

Moreover, since $D^k$ increases the bi-grading by $(k,2k-2)$ and the generators in positive homological gradings must eventually die, there is only one possibility for the higher differentials, as indicated in Figure \ref{fig:grid5}. Therefore, the Poincar{\'e} polynomials for the bi-graded groups $E^2(T(3,5))$, $E^3(T(3,5))$ and $E^4(T(3,5)) = E^{\infty}(T(3,5))$ are 

\begin{eqnarray*}
E^2(T(3,5))(h,q)&=& h^{0}q^{8} + h^{2}q^{12} + h^{3}q^{14} + h^{4}q^{14} + h^{5}q^{18} + h^{6}q^{18} + h^{7}q^{20},\\ 
E^3(T(3,5))(h,q) &=& h^{0}q^{8}  + h^{3}q^{14}   + h^{6}q^{18},\\
E^4(T(3,5))(h,q) &=& h^{0}q^{8}. 
\end{eqnarray*}
\vspace{1mm}

 \begin{figure}[!htbp]

 \labellist 
 \small\pinlabel $q$ at 23 351 
\tiny\hair 2pt

\pinlabel $8$ at 6 43 
\pinlabel $10$ at 6 84 
\pinlabel $12$ at 6 128 
\pinlabel $14$ at 6 170 
\pinlabel $16$ at 6 211 
\pinlabel $18$ at 6 252 
\pinlabel $20$ at 6 293 

\pinlabel $0$ at 46 5 
\pinlabel $1$ at 88 5
\pinlabel $2$ at 130 5
\pinlabel $3$ at 172 5
\pinlabel $4$ at 215 5
\pinlabel $5$ at 259 5
\pinlabel $6$ at 300 5
\pinlabel $7$ at 343 5

\small\pinlabel $D^2$ at 172 133
\pinlabel $D^3$ at 215 211
\pinlabel $D^2$ at 300 289

\small\pinlabel $h$ at 396 22

\pinlabel $\bullet$ at 46 43
\pinlabel $\bullet$ at 130 128
\pinlabel $\bullet$ at 174 170
\pinlabel $\bullet$ at 217 169
\pinlabel $\bullet$ at 259 253
\pinlabel $\bullet$ at 300 253
\pinlabel $\bullet$ at 344 294

\endlabellist
 \begin{center}
\includegraphics[height=5.7cm]{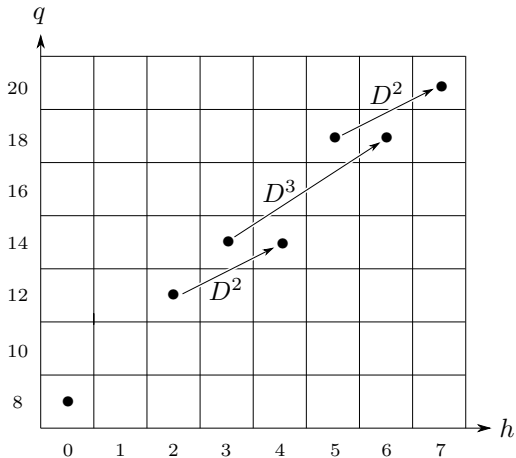}
\caption{\quad The higher differentials.}
\label{fig:grid5}
\end{center}
\end{figure}

We can apply the same sort of reasoning in studying the torus knot $T(3,4),$ as it too is almost alternating (see Figure \ref{fig:t34}). The reduced Khovanov homology of $T(3,4)$ has rank 5, and its Poincar{\'e} polynomial is $$\kh(T(3,4))(h,q) = \underline{h^{0}q^{6}} + \underline{h^{2}q^{10}} + \underline{h^{3}q^{12}} + h^{4}q^{12} + \underline{h^{5}q^{16}}
.$$ The underlined terms represent generators supported in $\delta$-grading 3, while the term $h^{4}q^{12}$ represents the generator in $\delta$-grading 2. Meanwhile, $\text{rk}\,\hf(-\Sigma(T(3,4))) = 3$. 

Since $T(3,4)$ is the closure of the positive 3-braid specified by $(xy)^4$, the generator represented by $h^0q^6$ must survive as a non-zero cycle throughout the spectral sequence. With this restriction, there is only one possibility for the higher differentials; namely, there is a single non-trivial component of the $D^2$ differential sending the generator represented by $h^{2}q^{10}$ to the generator represented by $h^{4} q^{12}$, and the spectral sequence collapses at the $E^3$ term. That is, the Poincar{\'e} polynomials for the bi-graded groups $E^2(T(3,4))$ and $E^3(T(3,4))= E^{\infty}(T(3,4))$ are 
\begin{eqnarray*}
E^2(T(3,4))(h,q)&=&h^{0}q^{6} + h^{2}q^{10} + h^{3}q^{12} + h^{4}q^{12} + h^{5}q^{16},\\ 
E^3(T(3,4))(h,q) &=& h^{0}q^{6}  + h^{3}q^{12} + h^{5}q^{16}.\\
\end{eqnarray*}

\begin{figure}[!h]
\label{fig:t34}

\begin{center}
\includegraphics[width=3.4cm]{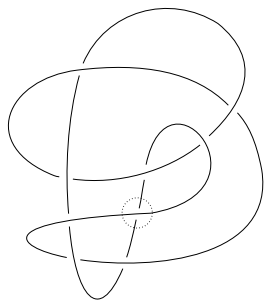}
\caption{\quad An almost alternating diagram for the knot $T(3,4) = 8_{19}$ which can be made alternating by changing the circled crossing. }
\end{center}
\end{figure} 

\begin{remark}
By taking connected sums and applying Corollary \ref{cor:connect}, we can use these examples to produce infinitely many links all of whose $E^k$ terms we know and whose spectral sequences do not collapse at $E^2$.
\end{remark}

\bibliographystyle{hplain}
\bibliography{References}

\end{document}